\numberwithin{equation}{section}
\def\@settitle{\begin{center}%
  \baselineskip14\p@\relax
  \bfseries
  \uppercasenonmath\@title
  \@title
  \ifx\@subtitle\@empty\else
     \\[1ex]\uppercasenonmath\@subtitle
     \footnotesize\mdseries\@subtitle
  \fi
  \end{center}%
}
\newcommand\blfootnote[1]{%
  \begingroup
  \renewcommand\thefootnote{}\footnote{#1}%
  \addtocounter{footnote}{-1}%
  \endgroup
}
\theoremstyle{plain}
\newtheorem{thm}{Theorem}[subsection] 
\theoremstyle{definition}
\newtheorem{defi}[thm]{Definition}
\newtheorem{rmk}[thm]{Remark}
\theoremstyle{definition}
\theoremstyle{plain}
\newtheorem{prop}[thm]{Proposition}
\theoremstyle{plain}
\newtheorem{lemma}[thm]{Lemma}
\theoremstyle{plain}
\newtheorem{cor}[thm]{Corollary}
\theoremstyle{plain}
\newtheorem*{thmintro}{Theorem}
\newtheorem*{rmkintro}{Remark}
\newcounter{parentnumber}
\DeclareMathOperator{\ord}{ord}
\newcommand{\Hom}{\operatorname{Hom}}
\newcommand{\res}{\operatorname{Res}}
\newcommand\SmallMatrix[1]{{%
  \tiny\arraycolsep=0.3\arraycolsep\ensuremath{\begin{pmatrix}#1\end{pmatrix}}}}
\newcommand{\colim@}[2]{%
  \vtop{\m@th\ialign{##\cr
    \hfil$#1\operator@font colim$\hfil\cr
    \noalign{\nointerlineskip\kern1.5\ex@}#2\cr
    \noalign{\nointerlineskip\kern-\ex@}\cr}}%
}
\newcommand{\colim}{%
  \mathop{\mathpalette\colim@{\rightarrowfill@\scriptscriptstyle}}\nmlimits@
}
\renewcommand{\varprojlim}{%
  \mathop{\mathpalette\varlim@{\leftarrowfill@\scriptscriptstyle}}\nmlimits@
}
\renewcommand{\varinjlim}{%
  \mathop{\mathpalette\varlim@{\rightarrowfill@\scriptscriptstyle}}\nmlimits@
}
\newcommand{\limm@}[2]{%
  \vtop{\m@th\ialign{##\cr
    \hfil$#1\operator@font Rlim$\hfil\cr
    \noalign{\nointerlineskip\kern1.5\ex@}#2\cr
    \noalign{\nointerlineskip\kern-\ex@}\cr}}%
}
\newcommand{\limm}{%
  \mathop{\mathpalette\limm@{\leftarrowfill@\scriptscriptstyle}}\nmlimits@
}
\newcommand{\ccolim@}[2]{%
  \vtop{\m@th\ialign{##\cr
    \hfil$#1\operator@font colim$\hfil\cr
    \noalign{\nointerlineskip\kern1.5\ex@}#2\cr
    \noalign{\nointerlineskip\kern-\ex@}\cr}}%
}
\newcommand{\ccolim}{%
  \mathop{\mathpalette\ccolim@{\rightarrowfill@\scriptscriptstyle}}\nmlimits@
}
\newcommand{\llimm@}[2]{%
  \vtop{\m@th\ialign{##\cr
    \hfil$#1\operator@font lim$\hfil\cr
    \noalign{\nointerlineskip\kern1.5\ex@}#2\cr
    \noalign{\nointerlineskip\kern-\ex@}\cr}}%
}
\newcommand{\llimm}{%
  \mathop{\mathpalette\llimm@{\leftarrowfill@\scriptscriptstyle}}\nmlimits@
}
\newcommand{\N}{\mathfrak{N}}
\newcommand{\M}{\mathfrak{M}}
\newcommand{\MM}{\mathbb{M}}
\newcommand{\Z}{\mathbb{Z}}
\newcommand{\Q}{\mathbb{Q}}
\newcommand{\Ff}{\mathbb{F}}
\newcommand{\R}{\mathbb{R}}
\newcommand{\C}{\mathbb{C}}
\newcommand{\A}{\mathbb{A}}
\newcommand{\Oo}{\mathcal{O}}
\newcommand{\I}{\mathcal{I}}
\newcommand{\hh}{\mathcal{H}}
\newcommand{\F}{\mathcal{F}}
\newcommand{\p}{\mathfrak{p}}
\newcommand{\cc}{\mathfrak{c}}
\newcommand{\q}{\mathfrak{q}}
\newcommand{\m}{\mathfrak{m}}
\newcommand{\X}{\mathfrak{X}}
\newcommand{\OG}{\Omega^{(\kappa_1,\kappa_2)}}
\DeclareMathOperator{\GL}{GL}
\DeclareMathOperator{\RG}{R\Gamma}
\DeclareMathOperator{\tor}{tor}
\DeclareMathOperator{\End}{End}
\DeclareMathOperator{\et}{\text{\'{e}}t}
\DeclareMathOperator{\dr}{dR}
\DeclareMathOperator{\tr}{Tr}
\DeclareMathOperator{\id}{Id}
\DeclareMathOperator{\HT}{HT}
\DeclareMathOperator{\myom}{\underline{\omega}}
\newcommand{\adjunction}[4]{\xymatrix@1{#1{\ } \ar@<-0.3ex>[r]_{ {\scriptstyle #2}} & {\ } #3 \ar@<-0.3ex>[l]_{ {\scriptstyle #4}}}}
\def\@settitle{\begin{center}%
  \baselineskip14\p@\relax
  \bfseries
  \uppercasenonmath\@title
  \@title
  \ifx\@subtitle\@empty\else
     \\[1ex]\uppercasenonmath\@subtitle
     \footnotesize\mdseries\@subtitle
  \fi
  \end{center}%
}
\def\subtitle#1{\gdef\@subtitle{#1}}
\def\@subtitle{}
\begin{document}

\title{Higher Hida theory for Hilbert modular varieties in the totally split case}\blfootnote{\date{\today}}\blfootnote{\emph{2020 Mathematics Subject Classification}. 11F41, 11F33, 11G18, 14G35}\blfootnote{\emph{Key words and phrases}. Hilbert modular varieties, $p$-adic modular forms, higher Hida theory, coherent cohomology of Shimura varieties.}
\author{Giada Grossi}

  \newcommand{\Addresses}{{
  \bigskip
  \footnotesize

\textsc{CNRS, Institut Galilée, Université Sorbonne Paris Nord,
99, avenue Jean-Baptiste Clément
93430 - Villetaneuse}\par\nopagebreak
  \textit{E-mail address}, G.~Grossi: \texttt{grossi@math.univ-paris13.fr}

}}

\begin{abstract}
We study $p$-adic properties of the coherent cohomology of some automorphic sheaves on the Hilbert modular variety $X$ for a totally real field $F$ in the case where the prime $p$ is totally split in $F$. More precisely, we develop higher Hida theory \`{a} la Pilloni, constructing, for $0\leq q\leq [F:\Q]$, some modules $M^q$ which $p$-adically interpolate the ordinary part of the cohomology groups $H^q(X, \underline{\omega}^{\kappa})$, varying the weight $\kappa$ of the automorphic sheaf.
\end{abstract}

\maketitle
\tableofcontents
\addtocontents{toc}{\protect\setcounter{tocdepth}{1}}
\section{Introduction}
The theory of $p$-adic families of ordinary modular forms was introduced by Hida in the '80s and has been proved to be fruitful in many aspects of number theory, such as the construction of $p$-adic $L$-functions or, together with the corresponding theory of Galois deformations, modularity-type results. This theory, later generalised to more general automorphic forms, provides a $p$-adic variation of the degree zero coherent cohomology groups of suitable Shimura varieties: the idea is to use the additional structure of the geometry of the ordinary locus of the Shimura variety to $p$-adically interpolate the automorphic sheaves (whose global sections are automorphic forms). By applying a projector with respect to certain Hecke operators at $p$, one is then able to determine when sections over the ordinary locus come from a classical automorphic form (see \cite{hidabook,hidajussieu,pillonibull} and more recently \cite{zhang}). The same circle of ideas was extended in the '90s by Coleman \cite{col1,col2}, who developed, working on neighbourhoods of the ordinary locus, the finite slope theory. 

The recent pioneering works \cite{pilloni, modularsurf, boxer2020higher, highersiegel,highercoleman} have developed analogous theories for higher degree coherent cohomology. In \cite{pilloni}, Pilloni introduced higher Hida and Coleman theory for automorphic forms for $\operatorname{GSp}_4/\Q$ and these ideas were later generalised in \cite{modularsurf} for $\res_{F/\Q}\operatorname{GSp}_4$, where $F$ is a totally real field in which the prime $p$ totally splits, and used to prove potential modularity of abelian surfaces over $F$. Boxer and Pilloni conjectured the existence of Hida and Coleman theories in all cohomological degrees for all Shimura varieties, confirming this prediction in the simplest case of $\GL_2$ in the recent work \cite{boxer2020higher} and for Sigel modular varieties in \cite{highersiegel}, as well as developing Coleman theory more generally in \cite{highercoleman}. In \cite{pilloni} and \cite{modularsurf}, the integral control theorem for ordinary families is obtained assuming the weights are \emph{big enough}. The control theorem for more general weights is obtained at the cost of inverting $p$ and losing control of torsion, using Coleman theory: the authors of \emph{op. cit.} construct over $\Q_p$ an overconvergent version of the considered complex and develop a theory of finite slope cohomological families, which they can prove to be classical cohomology classes in the small slope situation.

In this paper, we study the theory of $p$-adic ordinary families of Hilbert modular forms, which are automorphic forms for the group $\res_{F/\Q}\GL_2$, where $F$ is a totally real field and we assume the prime $p\ge 5$ to be totally split in $F$. We develop higher Hida theory in this context and, using the ideas of \cite{boxer2020higher}, we are able to prove integral control theorems without appealing to an overconvergent theory. 

Higher Hida (and Coleman) theory, as developed in \cite{pilloni}, has been applied in \cite{padicLfnct} for the construction of $p$-adic $L$-functions and was one of the main ingredients in the proof in \cite{gsp4blochkato} of new cases of the Bloch--Kato conjecture in rank 0 and one divisibility in the Iwasawa main conjecture for the spin motive of automorphic forms for $\operatorname{GSp}_4$. The work carried out in this paper (and more precisely its generalisation to Iwahori level Hilbert modular forms) has similar applications: in \cite{grossi2023padic}, the results on higher Hida theory are used to construct $p$-adic $L$-functions for the Asai motive of Hilbert modular forms (and in current work in progress with A. Graham we obtain similar applications to the twisted triple product associated to a Hilbert modular form and an elliptic modular form), following the strategy of \cite{padicLfnct}. The integral classicality results simplify the techniques of \emph{op. cit.}, where the authors had to rely on the overconvergent results. In order to obtain the application for the Bloch--Kato conjecture (for $F$ a real quadratic field), we plan to prove an explicit reciprocity law, linking such $p$-adic $L$-functions with the Euler system classes studied in \cite{HMS, giada}. 

\subsection{Main results} We now state our main result more precisely. Let $F$ be a totally real field of degree $n$ and $X$ be a smooth toroidal compactification of the Hilbert modular variety for $F$ of level sufficiently small and coprime to $p$. Let $L$ denote the Galois closure of $F$ containing the square roots of the totally positive units of $F$ and let $\Oo_L$ be its ring of integers. Fix $\wp\mid p$ a prime of $L$ and denote by $R$ the ring of integers of the completion of $L$ at $\wp$. Assume that $p$ is odd and splits completely in $F$. The set $\Sigma_{\infty}$ of embeddings of $F$ in $\R$ is then identified with the set of $p$-adic embeddings $F\hookrightarrow \bar{\Q}_p$ and therefore with $\{\p \subset \Oo_F: \p\mid p\}$. Let \[
\Lambda=R[[(\Z_p^{\times}/\{\pm 1\})^{n+1}]]
\]
Any $\underline{k}\in \Z^n,w\in \Z$ gives an algebra homomorphism $(2\underline{k},2w): \Lambda \to R$, induced by the character on $(\Z_p^\times)^{n+1}$ given by $((x_i)_{i=1,\dots,n},y)\mapsto y^{2w}\cdot \prod x_i^{2k_i}$. The main result of the paper is the following.
\begin{thmintro}[Classicality, Theorem \ref{mainthm}]
For any $J\subset \Sigma_{\infty}$, there exists a perfect complex of $\Lambda$-modules $M_J^\bullet$ satisfying the following property: for any $\underline{k}\in \Z^n, w \in \Z$ such that 
$2k_{\p}\leq -1$ for $\p \in J$, $2k_{\p}\geq 3$ for $\p\not\in J$, we have an isomorphism
\[
M_J^\bullet\otimes_{\Lambda,(2\underline{k},2w)}R \simeq e(T_p)\RG(X,\myom^{(2\underline{k},2w)}),
\]
where $\myom^{(2\underline{k},2w)}$ is the automorphic sheaf on $X$ of weight $(2\underline{k},2w)$ and $e(T_p)$ is the ordinary projector with respect to the Hecke operator $T_p$. Moreover, for any $J\subset \Sigma_{\infty}$, there exists a perfect complex of $\Lambda$-modules $N_J^\bullet$, which, for the same range of weights as above, satisfies 
\[
N_J^\bullet \otimes_{\Lambda,(2\underline{k},2w)}R \simeq e(T_p)\RG (X,\myom^{(2\underline{k},2w)}(-D)),
\]
where $D$ is the cuspidal divisor of $X$.

Moreover $M_J^\bullet$ is concentrated in degrees $[ \# J,n]$ and $N_J^\bullet$ is concentrated in degrees $[0, \# J]$.
\end{thmintro}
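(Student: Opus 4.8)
\textbf{Overview of the strategy.} The plan is to build the complexes $M_J^\bullet$ and $N_J^\bullet$ by working over the $p$-adic formal completion of $X$ along its special fibre, and to isolate the part of the cohomology controlled by a well-chosen open substack $X_J$ of the ordinary locus, cut out by the vanishing/invertibility of appropriate partial Hasse invariants indexed by $\p \in J$. The key geometric input is the description of the ordinary locus of a Hilbert modular variety when $p$ splits completely: the $p$-divisible group of the universal abelian variety decomposes, according to $\Sigma_\infty = \{\p \mid p\}$, into a product of one-dimensional pieces, and on the ordinary locus each piece is an extension of an \'etale by a multiplicative group. For a subset $J$ one can form the locus $X_J$ where the first-order (``partial Igusa'') structure is split multiplicative-type in the directions $\p \notin J$ and \'etale-type in the directions $\p \in J$; one then takes formal completions along these strata. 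The modules $M_J^q$ will be built from $\RG$ of $\myom^\kappa$ over $X_J$, into which one feeds the full tower of partial Igusa covers to trivialise the relevant factors and hence to make the weight variable interpolate $p$-adically over $\Lambda$.

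\textbf{Main steps, in order.} First I would recall the stratification of the ordinary locus and, for each $J$, construct the (partial) Igusa tower over $X_J$ and the associated space of $p$-adic Hilbert modular forms; the dimension count $\dim X_J = \#J$ in the ``\'etale'' directions and $n - \#J$ in the ``multiplicative'' directions is what will force the cohomological amplitude. Second, I would define the Hecke operator $U_p$ (or $T_p$) at $p$ as a correspondence on $X_J$ and its formal neighbourhoods, check that it is compatible with the partial Igusa structure, and prove that on the relevant cohomology it is potent/compact enough that the ordinary projector $e(T_p)$ exists and cuts out a finite projective (hence, after assembling, perfect) $\Lambda$-module; this is the analogue of Pilloni's and Boxer--Pilloni's key finiteness statement. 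Third, I would prove a \emph{vanishing} result: after applying $e(T_p)$, the cohomology of $\myom^\kappa$ over $X_J$ is concentrated in the single expected window $[\#J, n]$ (resp. $[0,\#J]$ for the cuspidal version), by combining the affineness of the multiplicative directions of $X_J$ with duality for the \'etale directions, exactly as in \cite{boxer2020higher}. Fourth, I would establish the \emph{control theorem} comparing $\RG(X_J, -)$ with $\RG(X, -)$ in the stated weight range: the hypotheses $k_\p \leq -1$ for $\p \in J$ and $k_\p \geq 3$ for $\p \notin J$ are precisely what makes the complementary-to-$X_J$ contribution killed by $e(T_p)$ (negativity/positivity of the weight in each direction controls whether pushforward along the Igusa tower loses or captures the classical sheaf), so that $M_J^\bullet \otimes_{\Lambda,(\underline k,\underline n)} R \simeq e(T_p)\RG(X,\myom^{(\underline k,w)})$; the congruence conditions $k_\p, \tfrac{w-k_\p}{2} \equiv 0 \bmod p-1$ simply align the algebraic weight with the $\Lambda$-character on $(1+p\Z_p)^{n+1}$ (absorbing the Teichm\"uller part). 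Finally, perfectness over $\Lambda$ follows from the finiteness in step two together with the standard base-change / Mittag-Leffler argument (the cohomology is computed by a bounded complex of finite projective $\Lambda$-modules), and the amplitude statement is read off from step three.

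\textbf{The main obstacle.} I expect the hardest part to be step three together with the interaction between the $U_p$-dynamics and the amplitude: one must show that $e(T_p)$ really does annihilate \emph{all} the cohomology outside $[\#J,n]$ over the formal neighbourhood of $X_J$, which requires a careful analysis of how $U_p$ acts across the partial Igusa tower in the mixed situation where some directions are \'etale and some multiplicative. Unlike the $\GL_2/\Q$ case of \cite{boxer2020higher}, here the different $\p$-directions must be handled simultaneously and one needs the normalisations of the partial Hecke operators at each $\p \mid p$ to be compatible; controlling torsion (so that the control theorem is integral, not just after inverting $p$) is exactly the delicate point, and it is resolved by the fact that $p$ splits completely, which makes each $\p$-direction genuinely one-dimensional and lets one reduce to the rank-one computation of \cite{boxer2020higher} in each factor. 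A secondary technical point is checking that the boundary of the toroidal compactification behaves well under all these constructions, i.e.\ that the partial Igusa towers and the operator $T_p$ extend over $X$ and are compatible with the cuspidal divisor $D$, so that the cuspidal complexes $N_J^\bullet$ are obtained by the same machine applied to $\myom^\kappa(-D)$.
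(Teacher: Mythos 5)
There is a genuine gap, and it sits at the heart of your construction. The locus $X_J$ you propose — an open substack of the ordinary locus where the $\p$-divisible group is ``multiplicative-type'' for $\p\notin J$ and ``\'etale-type'' for $\p\in J$ — is not a meaningful object: on the ordinary locus each $\p$-divisible piece is an extension of an \'etale by a multiplicative group of height one, so there is no stratum that is purely \'etale in some directions, and your dimension count for $X_J$ has no counterpart in the geometry. More importantly, the role of $J$ in the paper is not played by restriction to a smaller open subset of the ordinary locus but by \emph{support conditions} relative to the divisors $D_\p=Va(h_\p)$ (and their lifts $D_{\p,i}$ mod $\wp^i$): one takes cohomology over the formal completion of all of $X$ of the Igusa sheaf twisted by $\prod_{\p\notin J}\I_\p^{-m_\p}\prod_{\p\in J}\I_\p^{n_\p}$, a colimit in the directions $\p\notin J$ (i.e.\ cohomology of the complement of those $D_\p$) and a limit in the directions $\p\in J$ (compact support towards $D_\p$). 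Restricting sections to an open piece of the ordinary locus cannot produce complexes concentrated in degrees $[\#J,n]$, nor can it specialise to $e(T_p)\RG(X,\myom^{(\underline{k},w)})$ over the whole compactified variety; the compact-support direction is exactly what creates cohomology in degrees $\geq\#J$ and is absent from your plan.

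Second, the projector you use is not the right one, and your proposed mechanism for the control theorem is not an argument. For general $J$ the families are cut out by $e(T_J)$ with $T_J=\prod_{\p\notin J}U_\p\cdot\prod_{\p\in J}F_\p$, where the partial Frobenii $F_\p$ in the $J$-directions are essential and never appear in your sketch. The classicality isomorphism is proved in the paper by (i) a mod $\wp$ control theorem: the optimally normalised $T_\p$ factors through the \'etale (resp.\ multiplicative) locus of $X_0(\p)_1$ according to the sign of $k_\p-1$, its differential in the \'etale direction is identified with $h_\p$, and this produces maps $\RG(X_1,\myom^{(\underline{k},w)}(nD_\p))\to\RG(X_1,\myom^{(\underline{k},w)}((np+k_\p-2)D_\p))$ (and the analogue for $k_\p<1$) whose transition maps become quasi-isomorphisms after applying $e(T_\p)$ precisely when $k_\p\geq 3$, resp.\ $k_\p\leq -1$ — this is where the weight bounds enter, not through a vague ``the complement is killed by $e(T_p)$''; (ii) the congruence $T_\p=p^{k_\p-1}F_\p+U_\p$ (resp.\ $F_\p+p^{1-k_\p}U_\p$), so that $T_J\equiv T_p\bmod\wp$ at the specialised weights; and (iii) Pilloni's perfect-complex lemmas to lift the mod $\wp$ quasi-isomorphism and finiteness to $\Lambda$ and to $R$. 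Your claim that $e(T_p)$ directly cuts out a finite projective $\Lambda$-module, and that integrality is ``resolved by the fact that $p$ splits completely'', skips all of this; without the normalisation analysis of $T_\p$, the relation to the partial Hasse invariants, the operator $T_J$, and the mod-$p$-to-integral lifting, the proposal does not yield the stated theorem. (Your amplitude step is closer in spirit — the paper also uses affineness of Goren--Oort strata via generalised Hasse invariants, a Cousin complex over the minimal compactification, and duality — but it is applied to the support-condition complexes described above, not to $\RG$ of a stratum $X_J$.)
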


\begin{rmkintro}[Odd weights]
The complexes $M_J^\bullet$ and $N_J^\bullet$ are obtained using a sheaf of $\Lambda$-modules over the ordinary locus of the Hilbert modular variety. Performing the same construction but ``twisting'' such sheaves by the automorphic sheaf of parallel weight one (see \eqref{eq:twistIgusasheaf}), one can easily construct perfect complexes $M_{1,J}^\bullet$ and $N_{1,J}^\bullet$ concentrated in degrees $[ \# J,n]$ and $[0, \# J]$ respectively satisfying the following: for any $\underline{k}\in \Z^n, w \in \Z$ such that 
$2k_{\p}+1\leq -1$ for $\p \in J$, $2k_{\p}+1\geq 3$ for $\p\not\in J$, we have isomorphisms
\[
M_{1,J}^\bullet\otimes_{\Lambda,(2\underline{k},2w)}R \simeq e(T_p)\RG(X,\myom^{(2\underline{k}+1,2w+1)}),
\]
\[
N_{1,J}^\bullet \otimes_{\Lambda,(2\underline{k},2w)}R \simeq e(T_p)\RG (X,\myom^{(2\underline{k}+1,2w+1)}(-D)).
\]
In this paper, for the sake of simplicity and ease of notation, we will focus on the even weight complexes $M_J^\bullet$ and $N_J^\bullet$ only.
\end{rmkintro} 

Rephrasing the main result above, $M_J^\bullet$ (and $N_J^\bullet$) $p$-adically interpolate the ordinary part of coherent (cuspidal) cohomology of $X$ in a range of even weights depending on $J$. In particular, if $J=\emptyset$, $H^0(N_{\emptyset}^\bullet)$ interpolates ordinary classical holomorphic Hilbert cuspforms of weight $(2\underline{k},2w)$ for $2k_\p\geq 4$ for every $\p$, i.e. it recovers classical Hida theory for Hilbert modular forms. Such theory was developed by Hida in \cite{hidaannals} and \cite{hidaalgebra} with a very different method: the construction in his work is not geometric but it relies on the duality between cuspforms and Hecke algebras and the Jacquet--Langlands correspondence between Hilbert modular forms and quaternionic modular forms. In \cite{hidaannals}, Hida constructs a Hecke algebra over $R[[W]]$, where $W$ is the torsion-free part of the Galois group of the maximal abelian extension of $F$ unramified outside $p$ and $\infty$, fixing $\underline{n}\in \Z^n$, which interpolates the ordinary Hecke algebra of cuspforms of weight $(\underline{k},w)$ for $\underline{k}=w+2\underline{n}$. Later in \cite{hidaalgebra}, he \emph{unifies} these infinitely many Hecke algebras to obtain a universal one (without the restriction on the weight being parallel to $2\underline{n}$). Assuming Leopoldt's conjecture holds true for $F$, the number of variables in Hida's work in the same as in our theorem and both classicality results are for characters of the torus of $\res_{F/\Q}\GL_2(\Z_p)$ factoring through the quotient by the ($p$-adic closure of the) units of $\Oo_F$. However, Hida considers the diagonal embedding of the units, whereas we need to consider, because of the geometric nature of our construction and its moduli space interpretation, the embedding given by $\epsilon \mapsto (\epsilon,\epsilon^2)$, which results in a slightly different formulation of the classicality result (see Remark \ref{rmkhida} for more details). 

It is also important to mention that the geometric theory of $p$-adic Hilbert modular forms was developed, in the overconvergent setting, in various works (see for example \cite{AIS,AIP,tianxiao,kisin}), where the rational classicality results are obtained for finite slope families of degree zero coherent cohomology classes. We also remark that in the past works where sheaves interpolating the automorphic sheaves of classical Hilbert modular forms (\cite{AIS,AIP}) were constructed, this was done for automorphic forms for the group $G^*=G\times_{\det} \mathbb{G}_m$. Hilbert modular forms for the group $G$ are then obtained as the image of the global sections of the sheaves for $G^*$ under a projector for a suitably defined action of the units of $\Oo_F$. In this work we instead descend the interpolating sheaves to sheaves over the toroidal compactification of the Shimura variety for $G$, exploiting the action of the units encoded in the definitions and, since the novelty of our construction is that it also interpolates the \emph{determinant factors} $\wedge^2\mathcal{H}^1(\mathcal{A})_{\tau}^{(w-k_\tau)/2}$, we do not need to add the twist in the unit action as done for example in \cite{AIP} (see the discussion right before Definition 4.1), where the twist by the power $(w-k_\tau)/2$ is added \emph{artificially} (see $\S$\ref{seccomparison} for more details).

Finally, after showing (see Proposition \ref{propisomodnoneis}) that the natural map of $\Lambda$-complexes $N^\bullet_{J}\to M^\bullet_J$ becomes an isomorphism after localising at a non-Eisenstein maximal ideal $\mathfrak{M}$ of the Hecke algebra, we deduce that the localised complexes are concentrated in exactly one degree (namely $\# J$) and are therefore projective $\Lambda$-modules. 
Using this, we also prove that there is a perfect pairing interpolating in the classical weights Serre's duality pairing. More precisely, let $M_J= H^{\# J}(M^{\bullet}_J)_{\mathfrak{M}}$, we show in $\S$\ref{secduality} that we can define a pairing
\[
\langle-,-\rangle: M_J \times M_{J^c}\to \Lambda 
\]
of $\Lambda$-modules (where, in order to be precise, the structure of $\Lambda$-module of $M_{J^c}$ is actually twisted by a certain automorphism of $\Lambda$) which satisfies the following.
\begin{thmintro}[Theorem \ref{thmduality}] The pairing $\langle-,-\rangle$ is a perfect pairing compatible with Serre duality, i.e. for a classical weight $(\underline{k},w)$ as above, the following diagram is commutative
\[
\begin{tikzcd}
M_J\otimes_{\Lambda, (\underline{k},w)} R\arrow{d}{\simeq} &\times &M_{J^c}\otimes_{\Lambda, (2-\underline{k},-w)} R\arrow{d}{\simeq}\arrow{r} &R \\
e(T_p)H^{\#J}(X,\myom^{(2\underline{k},2w)})_{\mathfrak{M}} &\times &e(T_p)H^{n-\#J}(X,\myom^{(2-2\underline{k},-2w)}(-D))_{\mathfrak{M}}\arrow{ur}
\end{tikzcd}
\]
where the bottom pairing is induced by Serre duality and the vertical maps are the one obtained by the classicality theorem.
\end{thmintro}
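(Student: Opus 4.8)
The plan is to construct the pairing $\langle-,-\rangle$ and then check perfectness and the Serre-duality compatibility by a density/specialization argument over $\Lambda$. First I would recall that, by Proposition \ref{propisomodnoneis}, after localising at the non-Eisenstein maximal ideal $\M$ the complexes $M^\bullet_J$ and $N^\bullet_J$ become isomorphic and are concentrated in degree $\#J$, hence $M_J = H^{\#J}(M^\bullet_J)_\M$ is a projective (indeed finite free, after possibly shrinking) $\Lambda$-module. Classical Serre duality on the toroidal compactification $X$ (which has dimension $n$) gives, for each classical weight $(\underline k, w)$ in the allowed range, a perfect pairing
\[
H^{q}(X, \myom^{(\underline k,w)}) \times H^{n-q}(X, \myom^{(2-\underline k,-w)}(-D)) \longrightarrow H^{n}(X, \Omega^n_X(-D)) \longrightarrow R,
\]
using that the Serre-dualizing sheaf of $X$ is $\Omega^n_X$ and that $\myom^{(\underline k,w)}$ twisted appropriately pairs with $\myom^{(2-\underline k,-w)}$ against $\Omega^n_X(-D)$ (this is the usual Kodaira--Spencer identification, which on $X$ reads $\Omega^1_X \simeq \myom^{(\underline 2,\underline 2)}$ up to a boundary twist). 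One checks that this pairing is equivariant for the prime-to-$p$ Hecke action and is compatible with the $U_p$/$T_p$ operators after swapping to the adjoint, so that it carries the ordinary projector $e(T_p)$ on one side to the ordinary projector on the other; this is where the twist of the $\Lambda$-structure on $M_{J^c}$ by the automorphism $(\underline k, w)\mapsto (2-\underline k,-w)$ of $\Lambda$ enters, and it must be matched with the shift $J \mapsto J^c$ in the inequalities defining the allowed weights (if $k_\p \le -1$ then $2-k_\p \ge 3$, and vice versa).

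Next I would assemble these pairings into a map of $\Lambda$-modules. The cleanest route is to work at the level of perfect complexes: Grothendieck--Serre duality for the proper morphism $X \to \Sp R$, applied relatively to the interpolating complexes, produces a $\Lambda$-linear duality $M^\bullet_J \simeq \mathrm{RHom}_\Lambda(N^\bullet_{J^c}, \Lambda)[-n]$ (up to the $\Lambda$-twist), essentially because $M^\bullet_J$ and $N^\bullet_J$ were built as derived pushforwards of the interpolation sheaves along the ordinary/partial-ordinary locus and duality commutes with the relevant base change and with application of $e(T_p)$. Passing to cohomology in the single surviving degree $\#J$ after localising at $\M$ — where $N^\bullet_{J^c}$ is concentrated in degree $n - \#J$, so that $\mathrm{RHom}_\Lambda(N^\bullet_{J^c},\Lambda)[-n]$ is concentrated in degree $\#J$ — yields precisely $M_J \simeq \Hom_\Lambda(M_{J^c}, \Lambda)$, i.e. the desired perfect pairing $\langle-,-\rangle\colon M_J \times M_{J^c} \to \Lambda$.

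Finally, to prove the compatibility diagram commutes, I would specialise at a classical weight $(\underline k, w)$. Since $M^\bullet_J$ is a perfect complex of $\Lambda$-modules, $- \otimes^{\mathbb L}_{\Lambda,(\underline k,w)} R$ commutes with the $\mathrm{RHom}_\Lambda(-,\Lambda)$ formation up to the expected duality $\mathrm{RHom}_R(-,R)$; by the classicality isomorphism of Theorem \ref{mainthm} the left-hand vertical maps identify $M_J \otimes_{\Lambda,(\underline k,w)} R$ with $e(T_p)H^{\#J}(X,\myom^{(\underline k,w)})_\M$ and likewise for $M_{J^c}$ at the twisted weight $(2-\underline k, -w)$, and one checks that under these identifications the specialised $\Lambda$-pairing is exactly the classical Serre-duality pairing — this reduces to the statement that the relative Grothendieck duality isomorphism, base-changed to a classical point, recovers absolute Serre duality on the fibre $X$, which is standard. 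The commutativity for all classical weights in the allowed range, together with the fact that such weights are Zariski-dense in $\Sp \Lambda$ and $M_J$, $M_{J^c}$ are torsion-free over $\Lambda$ (being projective after localisation), then pins down $\langle-,-\rangle$ uniquely and confirms it is the pairing claimed.

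The main obstacle I anticipate is bookkeeping the twists: getting the Kodaira--Spencer identification on the toroidal compactification exactly right (including the boundary contribution $-D$ and the determinant-factor normalisation $\wedge^2\mathcal H^1(\mathcal A)^{(w-k_\tau)/2}$ that this paper builds into the sheaf), and then tracking how this interacts with the $U_p$-normalisation used to define $e(T_p)$ so that ordinarity is preserved under the duality. Everything else is formal once relative Grothendieck--Serre duality is set up for the interpolation complexes, but verifying that duality is genuinely compatible with the passage to the (partial) ordinary locus — i.e. that the dualizing complex of the relevant partial Igusa-type tower behaves as expected and that $e(T_p)$ on one side corresponds to $e(T_p)$ (not merely $e(U_p^\ast)$ for some adjoint) on the other — is the delicate point and will require care with the explicit Hecke correspondences at $p$.
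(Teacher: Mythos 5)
There is a genuine gap at the centre of your argument. The step where you assert that relative Grothendieck--Serre duality ``applied to the interpolating complexes'' produces a $\Lambda$-linear duality $M^{\bullet}_J \simeq \operatorname{RHom}_{\Lambda}(N^{\bullet}_{J^c},\Lambda)[-n]$ ``essentially because duality commutes with the relevant base change and with application of $e(T_p)$'' is not formal: it is, in substance, the statement to be proved. The complexes are not derived pushforwards of a coherent sheaf on a scheme proper over $\Lambda$; they are built from cohomology over the truncations $X_i$ of twists by powers of the ideal sheaves of the partial Hasse divisors, followed by $(\limm_{n_\tau})_{\tau\in J}(\colim_{m_\tau})_{\tau\notin J}$, a limit over $i$, and the projector $e(T_J)$. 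Duality interchanges the roles of the limits and the colimits (compact support towards $D_\p$ versus sections with poles along $D_\p$), interchanges $U_\p$ with $F_\p$ and hence $T_J$ with $T_{J^c}$ (Lemma \ref{lemmaFpUpdual}, Proposition \ref{dualtp}), and must be checked against the independence-of-extension statement of Proposition \ref{megaprop}(iv); none of this commutes with duality for free, and you flag it yourself as ``the delicate point'' without supplying the argument. Moreover you invoke the duality at the level of complexes before localising at $\mathfrak{M}$, which is stronger than anything the paper establishes (perfectness is only claimed after localisation, where both modules are projective and concentrated in a single degree). A small bookkeeping slip in the same direction: Kodaira--Spencer here reads $\myom^{(\underline{2},0)}(-D)\simeq \Omega^n_{X}$, not $\Omega^1_X\simeq \myom^{(\underline 2,\underline 2)}$.

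The paper's proof runs in the opposite logical direction and avoids any integral $\Lambda$-adic duality theorem. The pairing is constructed by hand from the finite-level Serre duality pairings $A_{\underline m,\underline n}\times B_{\underline m,\underline n}\to \Lambda_i$ on the $X_i$, after checking compatibility with the transition maps in $\underline m,\underline n, i$. The Hecke adjunction $\langle T_J f, g\rangle = \langle f, T_{J^c} g\rangle$ is proved by specialising at the Zariski-dense set of classical weights in $((p-1)^2\Z)^{n+1}$ (injectivity of $M\to \prod_{(\underline k,w)} M\otimes_{(\underline k,w)}R$) and invoking $D(F_\p)=U_\p$ there. Commutativity of the specialisation diagram is then checked directly via the maps \eqref{eq:map}--\eqref{eq:map2} and a short chain of equalities matching $e(T_J)$, $e(T_{J^c})$ and $e(T_p)$; and perfectness is \emph{deduced} from this commutativity together with the perfectness of the classical ordinary Serre pairing and the projectivity of the localised modules (Proposition \ref{propisomodnoneis}). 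So in your write-up perfectness is an input obtained from an unproved duality isomorphism, whereas in the paper it is the output of the compatibility check; as it stands, your central claim would need an independent proof that is not provided.
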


\subsection{Strategy} We briefly sketch how the complexes $M_J^\bullet$ (and $N_J^\bullet$) are constructed. As explained above, the idea is to construct a sheaf of $\Lambda$-modules over (the formal completion) of the ordinary locus of a fixed smooth toroidal compactification of the Hilbert modular variety. This sheaf is constructed using Igusa towers, which are torsor over the ordinary locus, and recover the classical automorphic sheaves $\myom^{(\underline{k},w)}$ when specialised at weights $(\underline{k},w)$. The complexes $M_{\emptyset}^\bullet$ and $N_{\emptyset}^\bullet$ are obtained simply as the image of the ordinary projector $e(U_p)$ of the cohomology over the ordinary locus of this sheaf. In order to define $M_J^\bullet$ and $N_J^\bullet$ in general we use the divisors $D_\p$ which are the vanishing locus of (various lifts of powers of) the partial Hasse invariants. Then, very roughly, we consider extensions of the sheaf above to the formal completion of the Hilbert modular variety and take cohomology over the complement of $\cup_{\p\not\in J}D_{\p}$ with compact support towards the divisors $D_\p$ for $\p\in J$. Then the desired complexes are obtained by taking the image of this cohomology under the projector with respect to a certain operator $T_J$ given by the composition of the operators $U_\p$ for $\p\not\in J$ and the partial Frobenii $F_\p$ for $\p\in J$. The way we obtain the classicality result is by first working on the special fibre of the variety and prove the classicality result modulo $\wp$ ($\S$\ref{modpsection}, Theorem \ref{modpclassical}). Since the sheaf of $\Lambda$-modules modulo the maximal ideal of $\Lambda$ is simply $\myom^{(\underline{k},w)}$ (for certain choices of $(\underline{k},w)$), this result can be formulated as follows: for certain choices of $(\underline{k},w)$ depending on $J$, the image under $e(T_p)$ of the cohomology of $\myom^{(\underline{k},w)}$ over the complement of $\cup_{\p\not\in J}D_{\p}$ with compact support towards the divisors $D_\p$ for $\p\in J$ is isomorphic to $e(T_p)\RG (X_1, \myom^{(\underline{k},w)})$, where $X_1$ is the special fibre of the Hilbert modular variety $X/R$. The proof of this result relies on the study of the partial $T_{\p}$ operators on the special fibre, once they have been carefully normalised in order to be optimally integral. The last ingredient we need is then to show that the operator $T_J$ specialised at the desired weight $(\underline{k},w)$ is congruent to the operator $T_p$ modulo $\wp$. 

The vanishing result of Theorem \ref{mainthm} is proved again by reducing it to a vanishing result of the cohomology modulo $\wp$ and using a filtration by $\# J$-strata of the complement of $\cup_{\p\not\in J}D_{\p}$, such that the complement of each stratum in the previous one is affine (in the minimal compactification).

The whole construction has various technical difficulties coming from the Hilbert modular variety not being a Shimura variety of PEL type. It is however a union of moduli spaces of Hilbert-Blumental abelian varieties (with prescribed polarisations) quotiented out by the action of the totally positive units of $\Oo_F$. Hence we often give definitions and constructions for the moduli space and then need to check how they behave with respect to this action in order to show that they descend to the Shimura variety (see for example Definition \ref{defiunits} and $\S$ \ref{secdescentigusa}).

\subsection{Outline of the paper} We recall in $\S$\ref{secprelim} the preliminaries on Hibert modular varieties, their compactifications and the automorphic vector bundles over them.

In $\S$\ref{secheckeandhasse} we define the partial Hecke operators $T_\p$ acting on cohomology of the automorphic vector bundles and normalise them so that they are optimally integral; we also recall the definition of the partial Hasse invariants on the special fibre of the Hilbert modular variety and how (certain powers of) these sections lift modulo powers of $\wp$.

The main constructions are carried out in $\S$\ref{sechida}, where we first work on the special fibre ($\S$\ref{modpsection}) and then move on (in $\S$\ref{char0sec}) to the Igusa tower constructions on the formal completion of the Hilbert modular variety and the proof of the classicality result. 

We finally construct the duality pairing in $\S$\ref{secduality} and prove its compatibility with Serre duality. 

\subsubsection*{Acknowledgements.} 
I would like to thank Vincent Pilloni for his seminal work on higher Hida theory, from which this article originates from. I thank both him and George Boxer for helpful discussions and explanations on their work. I am also grateful to Jacques Tilouine for many useful remarks and conversations. I thank David Loeffler and Sarah Zerbes for their encouragement and valuable comments and discussions. I also thank Ana Caraiani, Mladen Dimitrov and the anonymous referee for pointing out the issue about projectivity. Finally, I would like to express my gratitude to the anonymous referee for their valuable and thorough comments. The author was partially supported by the postdoctoral fellowship of the Fondation Sciences Mathématiques de Paris.
\addtocontents{toc}{\protect\setcounter{tocdepth}{2}}

\section{Preliminaries}\label{secprelim}
\subsection{Hilbert modular varieties and moduli interpretation}
Let $F$ be a totally real field of degree $n$. We consider $G:= \res_{F/\Q}\GL_2$.


Consider $K$ a neat open compact subgroup of $G(\A_f)$ and let
\begin{displaymath}
Y_{G,K}(\C)=G(\Q)\backslash G(\A)/Z_G(\R)^+K_{\infty}^+K,
\end{displaymath}
where $K_{\infty}^+=O(2)^n\cap G(\R)^+$ is the connected component of the maximal compact subgroup of $G(\R)$. We have $G(\R)/Z_G(\R)^+K_{\infty}^+ =(\hh\cup\hh^-)^n$ where $\hh\cup \hh^-=\C\setminus \R$ and $\hh$ is the upper half plane. The $n$-dimensional Shimura variety $Y_{G,K}(\C)$  carries a natural structure of complex quasi-projective variety. 

The determinant map $\det: G \rightarrow \operatorname{Res}_{F / \Q}\left(\mathbb{G}_{m}\right)$ induces a bijection between the set of geometric connected components of $Y_{G,K}(\C)$ and the finite double coset space
\begin{equation}\label{eq:connecomps}
Cl_{F}^{+}(K):=F_{+}^{\times} \backslash (\A_{F,f})^{\times} / \det(K)
\end{equation}
where $F_{+}^{\times}$ denotes the subgroup of $F^{\times}$ of totally positive elements. There is a natural surjective map $Cl_{F}^{+}(K) \rightarrow Cl_{F}^{+},$ where $Cl_{F}^{+}$ is the strict ideal class group of $F$. The preimage of each ideal class $[\mathrm{c}]$ is a torsor under the group $I:=\widehat{\mathcal{O}}_{F}^{\times} / \operatorname{det}(K) \mathcal{O}_{F,+}^{\times},$ where $\mathcal{O}_{F,+}^{\times}$ denotes the group of totally positive units in $\mathcal{O}_{F}$. By strong approximation we can write $G(\A_f)$ as a finite disjoint union over elements $c\in G(\A_f)$ such that $\det(c)$'s form a set of representatives of $Cl_{F}^{+}(K)$
\begin{displaymath}
G(\A_f)=\coprod_{c}G(\Q)^+cK
\end{displaymath}
and we therefore have
\begin{displaymath}
Y_{G,K}(\C)=\coprod_{c}\Gamma(c,K)\backslash \hh^n,
\end{displaymath}
where $\Gamma(c,K)=G(\Q)^+\cap cKc^{-1}$.

This Shimura variety is not of PEL type. However, as explained for example in \cite{tianxiao} (whose exposition we follow closely) it acquires a moduli space interpretation as follows. Firstly, from now on, we assume that $K=K^pK_p$, where $K^p\subset G(\A_f^p)$ is sufficiently small and $K_p=\GL_2(\Oo_F\otimes \Z_p)$. We rewrite the above disjoint union as 
\begin{displaymath}
Y_{G,K}(\C)=\coprod_{[\cc]\in Cl_{F}^{+}} \MM^{\cc}_K(\C), \ \ \ \text{where } \ \ \  \MM^{\cc}_K(\C)=\coprod_{c_i\in [\cc]_K}\Gamma(c_i,K)\backslash \hh^n,
\end{displaymath}
where for every ideal $\cc$ we write $[\cc]$ for its class in $Cl_{F}^{+}$, we select such representatives to be coprime to $p$ and we choose a subset $[\cc]_K=\{c_i, i\in I\}\subset G(\A_f)$ such that the fractional ideal associated to $\det(c_i)$ is $\cc$ and the set $\{\det(c_i)\}_{i\in I}$ is a set of representatives of the preimage of $[\cc]$ in $Cl_{F}^{+}(K)$.

Note that $\MM^{\cc}_K$ does not depend on the choice of $[\cc]_K$ and descends to an algebraic variety defined over $\Q$. Following \cite{tianxiao}, we will realise $\MM^{\cc}_K$ as quotient of some moduli space $\mathcal{M}^{\cc}_K$ by the action of the finite group
\begin{displaymath}
\Delta(K):=\mathcal{O}_{F,+}^{\times}/(K\cap \Oo_F^{\times})^2.
\end{displaymath} 
If $K^p$ is sufficiently small, we denote by $\mathcal{M}^{\cc}_K$ the smooth quasi-projective $\Z_{(p)}$-scheme (see 
\cite{rapo, chai}) representing the moduli problem which associates to a locally noetherian $\Z_{(p)}$-scheme $S$ the quadruple $(A,\iota,\lambda,\alpha_{K^p})$ given as follows
\begin{itemize}
\item $A$ is an $n$-dimensional abelian variety over $S$ with a homomorphism 
\begin{displaymath}
\iota: \Oo_F \to \operatorname{End}_S(A)
\end{displaymath}
such that Lie$(A)$ is a locally free $\Oo_S\otimes_{\Z}\Oo_F$-module of rank one;
\item $\lambda$ is a $\cc$-polarisation on $A$, i.e.
it is a $\Oo_F$-linear isomorphism
\begin{displaymath}
\lambda: A\otimes_{\Oo_F} \cc\xrightarrow{\simeq} A^{\vee},
\end{displaymath}
where $A^{\vee}$ denotes the dual abelian variety of $A$, which has a natural $\Oo_F$-action;
\item $\alpha_{K^p}$ is a $K^p$-level structure on $(A, \iota, \lambda)$, namely, assuming firstly that
\begin{displaymath}
K=\Gamma(N):=\{\gamma \in G(\hat{\Z}): \gamma \equiv 1 \mod N\},
\end{displaymath}
for an integer $N$ coprime to $p$, $\alpha_{K^p}$ is an $\Oo_F$-linear isomorphism of \'{e}tale group schemes over $S$
\begin{displaymath}
\alpha_{K^p}: (\Oo_F/N)^2 \xrightarrow{\simeq} A[N].
\end{displaymath}
The Weil pairing together with the polarisation $\lambda$ gives an $\Oo_F$-linear pairing $A[N]\times A[N]\to \mu_N\otimes_{\Z} \cc^{-1}\mathfrak{d}_F^{-1}$, where $\mathfrak{d}_F$ is the different ideal of $F/\Q$. Hence $\alpha_{K^p}$ determines an isomorphism $\Oo_F/N\to \mu_N \otimes \cc^{-1}\mathfrak{d}_F^{-1}$. One similarly defines a $K^p$-level structure by choosing $N$ such that $K(N)\subset K$, working on fibres $A_s$ over points $s$ of $S$ and using the action of $\GL_2(\Oo_F/N)$ on the $K(N)$-level structures on $A_s$ as above (see \cite[$\S$2.3]{tianxiao} for more details).
\end{itemize} 

We now recall that there is a natural action of $\Delta(K)$ on $\mathcal{M}^{\cc}_K$ given as follows. Firstly if $\epsilon\in \Oo_{F,+}^{\times}$, we can define
\begin{displaymath}
\epsilon\cdot (A,\iota, \lambda, \alpha_{K^p}) = (A,\iota, \iota(\epsilon)\circ\lambda, \alpha_{K^p}). 
\end{displaymath}
Moreover, if $\epsilon=\eta^2$ for some $\eta\in K\cap \Oo_F^{\times}$, then $\epsilon\cdot (A,\iota, \lambda, \alpha_{K^p})= (A,\iota, \lambda, \alpha_{K^p})$. This follows from the fact that any unit $\eta$ defines an isomorphism $\eta:A\simeq A$ such that $\eta^*\lambda =\eta^2\lambda$. Therefore we have defined an action of $\Delta(K)$ on $\mathcal{M}^{\cc}_K$ and the set of equivalent classes of geometric components under such action is in bijection with $\hat{\Oo}_F^{\times}/\det(K) \Oo_{F,+}^{\times}$ and the stabiliser of each component is $\det(K)\cap \Oo_{F,+}^{\times}/(K\cap \Oo_F^{\times})^2$. Following \cite{tianxiao}, we write $(A,\iota, \bar{\lambda}, \bar{\alpha_{K^p}})$ for the $\Oo_{F,+}^{\times}$-orbit of $(A,\iota, \lambda, \alpha_{K^p})$. The following is \cite[Proposition 2.4, Lemma 2.5]{tianxiao}.
\begin{prop}
The quotient of $\mathcal{M}^{\cc}_K(\C)$ by $\Delta(K)$ is isomorphic to $\MM^{\cc}_K(\C)$, which can be identified with the coarse moduli space over $\C$ of the orbits $(A,\iota, \bar{\lambda}, \bar{\alpha_{K^p}})$. Moreover, up to replacing $K^p$ by an open compact normal subgroup of finite index, we can assume $\det(K)\cap \Oo_{F,+}^{\times}=(K\cap \Oo_F^{\times})^2$ and therefore the quotient map
\begin{displaymath}
\mathcal{M}^{\cc}_K(\C) \to \MM^{\cc}_K(\C)
\end{displaymath}
induces an isomorphism between every connected component of $\mathcal{M}^{\cc}_K(\C)$ with its image. 
\end{prop}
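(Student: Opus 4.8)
The statement to prove is Proposition (the one attributed to \cite[Proposition 2.4, Lemma 2.5]{tianxiao}): that $\MM^{\cc}_K(\C) \simeq \mathcal{M}^{\cc}_K(\C)/\Delta(K)$, that $\MM^{\cc}_K(\C)$ is the coarse moduli space of orbits $(A,\iota,\bar\lambda,\bar\alpha_{K^p})$, and that after shrinking $K^p$ the quotient map restricts to an isomorphism on each connected component. The plan is to compare the analytic (double coset) description of $Y_{G,K}(\C)$ recalled earlier with the complex-analytic uniformization of the moduli space $\mathcal{M}^{\cc}_K$.

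\textbf{Step 1: Analytic uniformization of $\mathcal{M}^{\cc}_K$.} First I would describe the $\C$-points of $\mathcal{M}^{\cc}_K$ explicitly. A quadruple $(A,\iota,\lambda,\alpha_{K^p})$ over $\C$ is, by the classical theory of complex tori with real multiplication, equivalent to the datum of a point $\tau\in\hh^n$ (giving the complex structure on $(\Oo_F\otimes_{\Z}\R)^2$ via the polarisation datum of the $\cc$-polarisation, which pins down the lattice up to the $\Oo_F$-module structure fixed by $\cc$), together with level data. Concretely, writing the lattice as $\Oo_F\oplus \cc^{-1}\mathfrak{d}_F^{-1}$ embedded in $\C^n$ via $\tau$, the set of such data modulo isomorphism is $\Gamma^{\cc}_1(K^p)\backslash \hh^n$ for a suitable congruence subgroup (a union over $c_i\in[\cc]_K$, matching exactly the description $\MM^{\cc}_K(\C)=\coprod_{c_i}\Gamma(c_i,K)\backslash\hh^n$ once the units are quotiented out). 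The key bookkeeping point is that an isomorphism between two such quadruples is a pair $(g,\text{commuting with }\Oo_F)$ in $\GL_2(F)^+$ preserving the relevant lattice and level structure; the polarisation being fixed only up to the $\cc$-module structure (not strictly) is what forces the appearance of the unit group.

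\textbf{Step 2: Identify the $\Delta(K)$-action with the residual unit ambiguity.} The group $\Delta(K)=\Oo_{F,+}^{\times}/(K\cap\Oo_F^{\times})^2$ acts on $\mathcal{M}^{\cc}_K$ via $\epsilon\cdot(A,\iota,\lambda,\alpha_{K^p})=(A,\iota,\iota(\epsilon)\circ\lambda,\alpha_{K^p})$, and since a totally positive $\epsilon$ gives $\epsilon\colon A\xrightarrow{\sim}A$ with $\epsilon^*\lambda=\epsilon^2\lambda$, squares of units in $K$ act trivially (as already recorded in the excerpt). Comparing with the double-coset description, the group $I=\widehat{\Oo}_F^{\times}/\det(K)\Oo_{F,+}^{\times}$ measures the difference between $Cl_F^+(K)$ and $Cl_F^+$; I would check that the $\Delta(K)$-orbits of connected components correspond to this difference and that on each fibre the moduli space with its unit action reproduces $\coprod_{c_i\in[\cc]_K}\Gamma(c_i,K)\backslash\hh^n$, i.e. $\MM^{\cc}_K(\C)$. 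Taking the quotient then gives the first isomorphism, and since $\mathcal{M}^{\cc}_K$ already represents its moduli problem, the quotient is automatically the \emph{coarse} moduli space for the orbit functor $(A,\iota,\bar\lambda,\bar\alpha_{K^p})$ (a quotient of a fine moduli space by a finite group acting via automorphisms of objects is coarse for the quotient functor).

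\textbf{Step 3: Freeness after shrinking $K^p$.} For the last claim, observe that $\Delta(K)$ acts on the set of connected components with stabiliser $\det(K)\cap\Oo_{F,+}^{\times}/(K\cap\Oo_F^{\times})^2$, as noted. After replacing $K^p$ by a normal open subgroup of finite index one can arrange $\det(K)\cap\Oo_{F,+}^{\times}=(K\cap\Oo_F^{\times})^2$ — this is possible because $\Oo_{F,+}^{\times}$ is finitely generated and one only needs the congruence conditions defining $K^p$ to separate a fixed finite set of unit generators from being totally positive squares; neatness of $K$ then guarantees the action on each component is actually free (no nontrivial automorphisms), so the quotient map $\mathcal{M}^{\cc}_K(\C)\to\MM^{\cc}_K(\C)$ is étale of degree $\#\Delta(K)$ and restricts to an isomorphism on each connected component onto its image.

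\textbf{Main obstacle.} The routine-looking but genuinely delicate step is Step 1 together with the unit bookkeeping in Step 2: one must track carefully how the $\cc$-polarisation (an isomorphism $A\otimes_{\Oo_F}\cc\xrightarrow{\sim}A^\vee$ rather than a single polarisation) interacts with isomorphisms of quadruples and with the Weil-pairing normalisation into $\mu_N\otimes\cc^{-1}\mathfrak{d}_F^{-1}$, so that the residual ambiguity is \emph{exactly} $\Oo_{F,+}^{\times}$ and not a larger or smaller group. Getting the identification of $\Delta(K)$-orbits with the fibres of $Cl_F^+(K)\to Cl_F^+$ precisely right — rather than up to some finite discrepancy — is where the real content lies; the algebraicity and the descent to $\Q$ are then formal (GAGA plus Galois descent, or simply citing the representability references \cite{rapo,chai}).
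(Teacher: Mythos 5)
The paper offers no proof of this proposition: it is quoted from Tian--Xiao and the argument is delegated entirely to the citation \cite[Proposition 2.4, Lemma 2.5]{tianxiao}, so the only meaningful comparison is with the argument of that reference, which your plan essentially reconstructs along the standard route: complex uniformisation of $\mathcal{M}^{\cc}_K(\C)$, identification of the residual ambiguity in the $\cc$-polarisation with the $\Delta(K)$-action, and shrinking of $K^p$. Steps 1 and 2 are the standard bookkeeping and are fine in outline; your parenthetical justification of the coarse moduli claim (quotient of a fine moduli scheme by a finite group acting on the moduli problem is coarse for the orbit functor, over $\C$ and for quasi-projective schemes) is the right statement.

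Where I would press you is Step 3, which is the only point with genuine arithmetic content and where your justification is too thin, on two counts. First, ``separating a fixed finite set of unit generators from being totally positive squares by congruence conditions'' needs to be made precise: what one actually does is pick, for each nontrivial class of the finite group $\Oo_{F,+}^{\times}/(\Oo_F^{\times})^2$, an auxiliary prime $\q\nmid 2p$ (by Chebotarev in $F(\sqrt{\epsilon})/F$) at which that class is a quadratic non-residue, and impose level at these primes; the primes must avoid $p$, because $K_p=\GL_2(\Oo_F\otimes\Z_p)$ is fixed and no congruence condition at $p$ is available. Moreover the equality to be arranged is $\det(K)\cap\Oo_{F,+}^{\times}=(K\cap\Oo_F^{\times})^2$, not merely containment in $(\Oo_F^{\times})^2$: you must also ensure that the square root of such a unit can be taken \emph{inside} $K$, which requires either a Chevalley-type statement (units congruent to $1$ modulo a suitable deeper prime-to-$p$ ideal are squares of units satisfying the congruence defining $K$) or decoupling the congruence imposed on determinants from the one imposed on scalar matrices when choosing the smaller $K'^p$. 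Second, the injectivity of the quotient map on each connected component is not a consequence of neatness: neatness controls point stabilisers inside the arithmetic groups $\Gamma(c,K)$ (so the quotients of $\hh^n$ are genuinely varieties); what makes the restriction to a component injective is exactly that the stabiliser of a component in $\Delta(K)$ is $\det(K)\cap\Oo_{F,+}^{\times}/(K\cap\Oo_F^{\times})^2$, which is trivial precisely once the displayed equality has been arranged, and then the action is free and the quotient map is finite \'{e}tale of degree $\#\Delta(K)$ as you say.
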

We assume from now on that $K$ is sufficiently small and $\det(K)\cap \Oo_{F,+}^{\times}=(K\cap \Oo_F^{\times})^2$. Let
\begin{displaymath}
\mathcal{M}_K:=\coprod_{[\cc]\in Cl_{F}^{+}} \mathcal{M}^{\cc}_K, \ \ \ \MM^{\cc}_K:=\mathcal{M}^{\cc}_K/ \Delta(K) \ \ \ \text{ and } \ \ \ Y_{G,K}=\coprod_{[\cc]\in Cl_{F}^{+}} \MM^{\cc}_K.
\end{displaymath}
The proposition implies that every geometric connected component of $Y_{G,K}$ is identified with a geometric connected component of $\mathcal{M}^{\cc}_K$ for some $\cc$. Hence $Y_{G,K}$ is quasi-projective smooth over $\Z_{(p)}$ and it is the integral model of $Y_{G,K}(\C)$. It also has a universal family of abelian varieties over it, denoted by
\begin{displaymath}
\mathcal{A}\to Y_{G,K}
\end{displaymath} 
built using the universal abelian schemes $\mathcal{A}^{\cc}\to \mathcal{M}^{\cc}_K$.

We will also need the auxiliary variety of Iwahori level at a prime $\p$ above $p$. Assume $p$ is unramified in $F$ and let $K^p$ as above. Consider
\begin{displaymath}
K_0(\p)=\{g\in G(\Z_p): g\equiv \SmallMatrix{*&*\\0&*} \mod \p\}.
\end{displaymath}
Then $K^pK_0(\p)$ is again sufficiently small and we denote by $\mathcal{M}^{\cc}_{K}(\p)$ the smooth quasi-projective $\Z_{(p)}$-scheme (see 
\cite{rapo}) representing the moduli problem which associates to a locally noetherian $\Z_{(p)}$-scheme $S$ an isogeny $\phi:A_1\to A_2$ of degree Norm$(\p)$, where $A_1,A_2$ corresponds to quadruples $(A_i,\iota_i,\lambda_i,\alpha_{i,K^p})$ as above where 
\begin{itemize}
\item the kernel of $\phi$ is annihilated by $\p$;
\item $\lambda_1$ is a $\cc$-polarisation on $A_1$ and $\lambda_2$ is a $\cc\p$-polarisation of $A_2$ and for every $x\in \cc\p\subset \cc$ we have
\begin{displaymath}
\phi^*\circ \lambda_2(x)\circ \phi = \lambda_1(x);
\end{displaymath}
\item the $K^p$-level structures $\alpha_{i,K^p}$ are compatible, i.e. if $K^p$ is the congruence subgroup of level $N$
\begin{displaymath}
\alpha_{2,K^p}=\phi_{|A_1[N]}\circ \alpha_{1,K^p}.
\end{displaymath}
\end{itemize} 
The fibre of $\mathcal{M}^{\cc}_K(\p)$ over $p$ is smooth outside a closed subset of codimension 1. We can define an action of the units on this moduli space and we let as above 
\begin{displaymath}
\mathcal{M}_K(\p):=\coprod_{[\cc]\in Cl_{F}^{+}} \mathcal{M}^{\cc}_K(\p), \ \ \ \MM^{\cc}_K(\p):=\mathcal{M}^{\cc}_K(\p)/ \Delta(K') \ \ \ \text{ and } \ \ \ Y_{G,K}(\p)=\coprod_{[\cc]\in Cl_{F}^{+}} \MM^{\cc}_K(\p).
\end{displaymath}
As above, $Y_{G,K}(\p)$ is quasi-projective over $\Z_{(p)}$ and it is the integral model of $Y_{G,K'}(\C)$, where $K'=K^pK_0(\p)$. Moreover, there is a natural forgetful morphism $\mathcal{M}^{\cc}_{K}(\p)\to \mathcal{M}^{\cc}_{K}$ which is equivariant for the actions of $\Delta(K)$ and hence induces a morphism
\begin{equation}\label{eq:p1}
p_1: Y_{G,K}(\p)\to Y_{G,K}.
\end{equation}
Fix a fractional ideal $\cc$ and an isomorphism $\theta_{\cc}: \cc'\to \cc\p$ for some $[\cc']\in Cl_{F}^{+}$; such isomorphism is unique up to an element of $\Oo_{F,+}^{\times}$. Then one can also consider the forgetful morphism $p_{2,\theta_{\cc}}:\mathcal{M}^{\cc}_{K}(\p)\to \mathcal{M}^{\cc'}_{K}$, which now sends the isogenous pair to the second quadruple with polarisation $A_2\otimes \cc'\xrightarrow{\theta_{\cc}} A_2\otimes \cc\p\xrightarrow{\lambda_2}A^{\vee}$. This map is equivariant under the action of $\Delta(K)$ and $p_{2,\epsilon\cdot\theta_{\cc}}$ is equal to $p_{2,\theta_{\cc}}$ composed with the map induced by the action of $\epsilon$. Therefore we obtain a well defined morphism
\begin{equation}\label{eq:p2}
p_{2}: Y_{G,K}(\p)\to Y_{G,K}
\end{equation} 
independent on the choice of $\theta_{\cc}$.
\subsection{Compactifications} We recall a few facts about toroidal compactifications. Let $K=K^p K_p\subset G(\A_f)$ an open compact as above with $K_p=\prod_{\p\mid p} K_{\p}$ and $K_{\p} \in \{\GL_2(\Oo_{F_{\p}}), K_0(\p)\}$. 

Choosing an admissible rational polyhedral cone decomposition, one constructs a smooth toroidal compactifications of $\mathcal{M}^{\cc}_K$, see for example \cite[$\S$ 5]{dimtil}, \cite{rapo,chai} and more recently \cite{lan13,lan17}. In particular the case $K_{\p}=\GL_2(\Oo_{F_{\p}})$ for every $\p\mid p$ is covered in \cite{lan13} and the case with \emph{some level} at $p$ is covered in \cite{lan17}. More precisely, there exists a scheme $\mathcal{M}^{\cc, \tor}_K$ flat, local complete intersection and normal over $\Z_{(p)}$ containing $\mathcal{M}^{\cc}_K$ as a fiberwise dense open subscheme. This depends a priori on the choice of the cone decomposition, but we will see later that the cohomology groups we work with are independent on this choice. Moreover, there exists a semi-abelian scheme $\mathcal{A}^{\cc,\tor}\to \mathcal{M}^{\cc, \tor}_K$ extending the universal abelian scheme over $\mathcal{M}^{\cc,\tor}$, with $\cc$-polarisation, $\Oo_F$-action and level structure extending the data on $\mathcal{A}^{\cc}$. The boundary divisor $\mathcal{D}=\sqcup_{\cc}\mathcal{M}^{\cc, \tor}_K - \sqcup_{\cc}\mathcal{M}^{\cc}_K$ is a relative simple normal crossing divisor, endowed with a free action of $\Delta(K)$.  Let
\begin{displaymath}
\mathcal{M}_K^{\tor}:=\coprod_{[\cc]\in Cl_{F}^{+}} \mathcal{M}^{\cc,\tor}_K, \ \ \ \MM^{\cc,\tor}_K:=\mathcal{M}^{\cc,\tor}_K/ \Delta(K) \ \ \ \text{ and } \ \ \ X_{G,K}=\coprod_{[\cc]\in Cl_{F}^{+}} \MM^{\cc,\tor}_K
\end{displaymath}
and denote by $D$ the boundary divisor of $X_{G,K}$.

\subsection{Automorphic vector bundles} Let $L$ denote the Galois closure of $F(\sqrt{\epsilon}:\epsilon\in \Oo_{F,+}^{\times})$ and let $\Oo_L$ be its ring of integers. Fix a noetherian $\Oo_{L,(p)}$-algebra $R$. Let us rename for simplicity $\mathcal{M}=(\mathcal{M}_K^{\tor})_R$ and let $\mathcal{A}^{\tor}=\sqcup \mathcal{A}^{\cc, \tor}\to \mathcal{M}$ be the semi-abelian scheme extending the universal abelian variety with real multiplication by $\Oo_F$. Let $e: \mathcal{M}\to \mathcal{A}^{\tor}$ be the unit section and
\begin{displaymath}
\myom:=e^{\star}\Omega^1_{\mathcal{A^{\tor}}/\mathcal{M}};
\end{displaymath}  
this is $(\Oo_\mathcal{M}\otimes_{\Z}\Oo_F)$-module locally free of rank 1. Its restriction to $\mathcal{M}_K$ coincides with the sheaf defined analogously using the unit section of the abelian scheme $\sqcup \mathcal{A}^{\cc}$.  We can write
\begin{displaymath}
\myom=\oplus_{\tau \in \Sigma_{\infty}} \omega_{\tau}
\end{displaymath}
where $\omega_{\tau}$ is the direct summand on which the $\Oo_F$-action is given by the composition of the embedding $\tau$ with the structure morphism $\Oo_{L,(p)}\to R$. Let $\hh^1$ be the canonical extension of is the relative de Rham cohomology $\hh^1_{\dr}(\mathcal{A}/\mathcal{M})$ of $\mathcal{A}$. It is a $(\Oo_{\mathcal{M}}\otimes_{\Z}\Oo_F)$-module locally free of rank 2. We have the Hodge filtration
\begin{equation}\label{hodgeexseq}
\tag{H}
0\to \myom \to \hh^1 \to (\myom_{\mathcal{A}^{\vee}} )^{\vee}\otimes \mathfrak{d}^{-1}\to 0,
\end{equation}
where $\myom_{\mathcal{A}^{\vee}}:=(e')^{\star}\Omega^1_{\mathcal{A}^{\vee}/\mathcal{M}}$, where $e'$ is the unit section of $\mathcal{A}^{\vee}$.

For $(\underline{k}, w) \in \Z^{\#\Sigma_{\infty}}\times \Z$ such that $k_{\tau}\equiv w \mod 2$ and $k_{\tau}\leq w$ for every $\tau\in \Sigma_{\infty}$ let
\begin{displaymath}
\myom^{(\underline{k},w)}:= \bigotimes_{\tau \in \Sigma_{\infty}}\left(\left(\wedge^{2} \mathcal{H}_{\tau}^{1}\right)^{\frac{w-k_{\tau}}{2}} \otimes {\omega}_{\tau}^{k_{\tau}}\right)
\end{displaymath}

\begin{rmk}
One has (see for example \cite[1.0.13-1.0.15]{katzpadic}) that $\bigotimes_{\tau \in \Sigma_{\infty}}\left(\wedge^{2} \mathcal{H}_{\tau}^{1}\right)$ admits a trivialisation on each of the components $\mathcal{M}^{\cc}_K $. Such trivialisation however depends on the $\cc$-polarisation and is not canonical on $X_{G,K}$. 
\end{rmk}

Global sections of this sheaf can be interpreted as Hilbert modular forms \`{a} la Katz. See for example \cite[1.2]{katzpadic} (where however the definition corresponds to sections of the sheaf $\bigotimes_{\tau \in \Sigma_{\infty}}{\omega}_{\tau}^{k_{\tau}}$, in view of the above remark).

\begin{defi}\label{defikatz}
A $\cc$-Hilbert modular form of weight $(\underline{k},w)$ of level $K$ defined over an $\Oo_{L,(p)}$-algebra $R$ is a rule $f$ which assigns to every quadruple $(A,\iota, \lambda, {\alpha_{K}})$ as above defined over $R$, where $\lambda$ is a $\cc$-polarisation, given with a pair $(\omega, \eta)$, where $\omega$ is an $\Oo_F\otimes R$ basis of $\Omega^1_{A/R}$ and $\eta$ is an $\Oo_F\otimes R$-basis of $\wedge^2H^1_{\dr}(A)$ an element $f(A,\iota, \lambda,{\alpha_{K}}, \omega, \eta)\in R$, satisfying the following conditions:
\begin{itemize}
\item[(i)] $f(A,\iota, \lambda, {\alpha_{K}}, \omega, \eta)$ depends only on the $R$-isomorphism class of $(A,\iota, \lambda, {\alpha_{K}}, \omega, \eta)$;
\item[(ii)] $f$ commutes with extension of scalars $R_1\to R_2$ of $R$-algebras;
\item[(iii)] for any $\underline{a},\underline{b}\in (R^{\times})^{\Sigma_{\infty}}\simeq(\Oo_F\otimes R)^{\times}$, we have
\[
f(A,\iota, \lambda, {\alpha_{K}}, \underline{a}\cdot \omega, \underline{b}\cdot \eta)=\prod_{\tau \in \Sigma_{\infty}}a_{\tau}^{-k_{\tau}}b_{\tau}^{-\tfrac{w-k_{\tau}}{2}}f(A,\iota, \lambda, {\alpha_{K}}, \omega, \eta).
\]
\end{itemize}
\end{defi}

Clearly, in order to get sections of the sheaf $\myom^{(\underline{k},w)}$, one needs to admit different polarisation types.

One could more generally define, for $(\underline{k},\underline{n})\in\Z^{\#\Sigma_{\infty}}\times\Z^{\#\Sigma_{\infty}}$, a sheaf
\begin{displaymath}
\myom^{(\underline{k},\underline{n})}:= \bigotimes_{\tau \in \Sigma_{\infty}}\left(\left(\wedge^{2} \mathcal{H}_{\tau}^{1}\right)^{n_{\tau}} \otimes {\omega}_{\tau}^{k_{\tau}}\right).
\end{displaymath}
For $k_{\tau}\equiv w \mod 2$ for every $\tau$, we recover $\myom^{(\underline{k},\tfrac{w-\underline{k}}{2})}=\myom^{(\underline{k},w)}$.

Let us denote by $X$ the compactified Shimura variety $X_{G,K}$. In order to define a sheaf over $X$, we need to give a descent datum for the map $\mathcal{M}\to X$. We will see that this will force $k_\tau + 2n_\tau=w$ for some $w\in \Z$.

\begin{defi}\label{defiunits} The action of $\epsilon\in \Oo_{F,+}^{\times}$ is given on stalks by the isomorphism (see for example \cite[$\S$4]{dimtil})
\begin{displaymath}
\omega_{(A,\iota,\epsilon^{-1}\lambda,\alpha_{K^p})}= \omega_{(A,\iota,\lambda,\alpha_{K^p})} \to  \omega_{(A,\iota,\lambda,\alpha_{K^p})},
\end{displaymath}
where the first equality is given by the fact that the sheaf $\omega$ does not depend on the polarisation and the second map is the multiplication by $\prod_{\tau} \tau(\epsilon)^{-1/2}$. Similarly the action on $\wedge^2\hh^1$ is given by multiplication by $\prod_{\tau} \tau(\epsilon)^{-1}$.
\end{defi}

If $\epsilon^2 \in (K\cap \Oo_F^{\times})^2$, one easily verifies that the action defined above is trivial. More precisely, we have
\begin{displaymath}
\epsilon^*(A,\iota,\lambda,\alpha_{K^p},\omega)=(A,\iota,\epsilon^2 \lambda, \epsilon\omega, \epsilon^{2} \eta)
\end{displaymath}
and any section $f$ of  ${\omega}^{(\underline{k},\underline{n})}$ satisfies
\begin{align*}
f(\epsilon^2\cdot (A,\iota,\lambda,\alpha_{K^p},\omega,\eta))&=\prod_{\tau}\tau(\epsilon)^{-(k_\tau +2n_\tau)}f(A,\iota,\epsilon^{-2}\lambda,\alpha_{K^p},\omega,\eta)\\&=\prod_{\tau}\tau(\epsilon)^{-(k_\tau +2n_\tau)}f((\epsilon^{-1})^*(A,\iota,\lambda,\alpha_{K^p},\epsilon^{-1}\omega,\epsilon^{-2}\eta))=f(A,\iota,\lambda,\alpha_{K^p},\omega,\eta).
\end{align*}

By abuse of notation we will still denote by ${\omega}^{(\underline{k},\underline{n})}$ the descent of the sheaf ${\omega}^{(\underline{k},\underline{n})}$ over $X$. Note that section of this sheaf are rules as in Definition \ref{defikatz} satisfying the additional condition
\begin{displaymath}
f(A,\iota,\epsilon\cdot\lambda,\alpha_{K^p},\omega,\eta)=f(A,\iota,\lambda,\alpha_{K^p},\omega,\eta) \ \ \ \ \forall \epsilon\in \Oo_{F,+}^{\times}.
\end{displaymath}
This implies, if $R$ has characteristic zero, that $\myom^{(\underline{k},\underline{n})}_R$ has non-zero global sections over $X$ if and only if $k_\tau + 2n_\tau=w$ for some $w\in \Z$.

Finally, we recall that the cohomology of this sheaf does not depend on the cone decomposition chosen to define the toroidal compactification $X$.
\begin{lemma}[\cite{lan13}]
The cohomologies $\RG (X,\myom^{(\underline{k},w)})$, $\RG (\mathcal{M},\myom^{(\underline{k},w)})$, $\RG (X,\myom^{(\underline{k},w)}(-D))$ and \\$\RG (\mathcal{M},\myom^{(\underline{k},w)}(-\mathcal{D}))$ are independent on the cone decompositions chosen to define $\mathcal{M}$ and $X$.
\end{lemma}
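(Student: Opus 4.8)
The plan is to reduce all four statements at once to the acyclicity of a common refinement morphism of toroidal compactifications and then conclude by the projection formula. For an admissible rational polyhedral cone decomposition $\Sigma$ write $\mathcal{M}_\Sigma$, $X_\Sigma$ for the corresponding toroidal compactifications of the moduli space and of the Shimura variety, with reduced boundary divisors $\mathcal{D}_\Sigma$, $D_\Sigma$; the interiors $\mathcal{M}_K$, $Y_{G,K}$ do not depend on $\Sigma$, so only the compactified cohomologies carry content. Given two choices $\Sigma,\Sigma'$, I would choose a third admissible decomposition $\Sigma''$ refining both, taken $\Delta(K)$-equivariant (this is possible since the compactifications of $X$ are themselves built from $\Delta(K)$-equivariant cone data). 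By functoriality of Mumford's construction in the cone decomposition one obtains proper morphisms $\pi\colon\mathcal{M}_{\Sigma''}\to\mathcal{M}_\Sigma$ and $\pi_X\colon X_{\Sigma''}\to X_\Sigma$, each an isomorphism over the common interior and each carrying the boundary divisor into the boundary divisor; it therefore suffices to compare the $\Sigma''$-cohomologies with the $\Sigma$-cohomologies.

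The next step is to record how the relevant sheaves behave under $\pi$. The semi-abelian scheme $\mathcal{A}^{\tor}$, together with its $\Oo_F$-action, polarisation datum and level structure, is by construction compatible with $\pi$, hence so is its canonically extended de Rham realisation; thus $\pi^{\star}$ carries $\myom$, each summand $\omega_\tau$ and the canonical extension $\hh^1$ on $\mathcal{M}_\Sigma$ to the corresponding sheaves on $\mathcal{M}_{\Sigma''}$, so that $\pi^{\star}\myom^{(\underline{k},w)}_\Sigma=\myom^{(\underline{k},w)}_{\Sigma''}$, and likewise $\myom^{(\underline{k},w)}_{\Sigma''}(-\mathcal{D}_{\Sigma''})=\pi^{\star}\myom^{(\underline{k},w)}_\Sigma\otimes_{\Oo}\mathcal{I}_{\mathcal{D}_{\Sigma''}}$ (note that $\pi^{\star}\mathcal{I}_{\mathcal{D}_\Sigma}$ is \emph{not} $\mathcal{I}_{\mathcal{D}_{\Sigma''}}$ in general, but this will not matter). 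The same holds on the Shimura-variety side for $\pi_X$. The one nontrivial geometric input, which I expect to be the sole real obstacle, is the acyclicity
\[
R\pi_{\star}\Oo_{\mathcal{M}_{\Sigma''}}=\Oo_{\mathcal{M}_\Sigma},\qquad R\pi_{\star}\mathcal{I}_{\mathcal{D}_{\Sigma''}}=\mathcal{I}_{\mathcal{D}_\Sigma},
\]
together with the analogues for $\pi_X$: here $\pi_{\star}\Oo=\Oo$ is immediate from normality of $\mathcal{M}_\Sigma$ (Zariski's main theorem), while the vanishing of the higher direct images is checked on the formal completion along the boundary, where $\pi$ is \'etale-locally a proper birational refinement morphism of relative toric schemes, for which the structure sheaf and the toric boundary ideal are acyclic (rational singularities of toric varieties, in mixed characteristic); this is exactly what is carried out in \cite{lan13}, which we invoke.

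Granting this, the projection formula finishes the argument: since $\myom^{(\underline{k},w)}_\Sigma$ is locally free,
\[
R\pi_{\star}\myom^{(\underline{k},w)}_{\Sigma''}=\myom^{(\underline{k},w)}_\Sigma\otimes_{\Oo}R\pi_{\star}\Oo_{\mathcal{M}_{\Sigma''}}=\myom^{(\underline{k},w)}_\Sigma,
\]
\[
R\pi_{\star}\bigl(\myom^{(\underline{k},w)}_{\Sigma''}(-\mathcal{D}_{\Sigma''})\bigr)=\myom^{(\underline{k},w)}_\Sigma\otimes_{\Oo}R\pi_{\star}\mathcal{I}_{\mathcal{D}_{\Sigma''}}=\myom^{(\underline{k},w)}_\Sigma(-\mathcal{D}_\Sigma).
\]
Applying $\RG(\mathcal{M}_\Sigma,-)$ and the Leray spectral sequence gives $\RG(\mathcal{M}_{\Sigma''},\myom^{(\underline{k},w)}_{\Sigma''})\simeq\RG(\mathcal{M}_\Sigma,\myom^{(\underline{k},w)}_\Sigma)$ and its cuspidal counterpart; repeating with $\Sigma'$ in place of $\Sigma$ identifies all three, which proves the independence for $\mathcal{M}$. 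The identical computation with $\pi_X$ proves it for $X$ (alternatively, the cohomologies on $X$ are recovered from those on $\mathcal{M}$ by descent along the finite map $\mathcal{M}\to X$, so the two independence assertions are equivalent). As already indicated, the only step that is not formal is the acyclicity in the middle paragraph, for which we rely entirely on \cite{lan13}.
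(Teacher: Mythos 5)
Your proposal is correct and takes essentially the same route as the paper, which offers no argument of its own but simply cites \cite{lan13}: the reduction to a common $\Delta(K)$-equivariant refinement, the identities $R\pi_{\star}\Oo_{\mathcal{M}_{\Sigma''}}=\Oo_{\mathcal{M}_{\Sigma}}$ and $R\pi_{\star}\mathcal{I}_{\mathcal{D}_{\Sigma''}}=\mathcal{I}_{\mathcal{D}_{\Sigma}}$, and the projection formula plus Leray are exactly the content of the results invoked there. Nothing essential is missing.
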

Moreover, we define the Hodge line bundle 
\[
\det(\myom):=\wedge^n_{\Oo_{X}}\myom.
\] 
One can construct minimal compactifications of $Y_{G,K}$, following \cite{chai} or \cite[$\S$ 7.2]{lan13}, as follows
\[
X_{G,K}^*=\operatorname{Proj}\left( \oplus_{m\geq 0} \Gamma(X_{G,K},\det(\myom)^{\otimes m})\right).
\]
It is a normal projective scheme over $\Z_{(p)}$ and $\det(\myom)$ descends to an ample line bundle on $X^*=X_{G,K}^*$. The inclusion $Y_{G,K}\hookrightarrow X$ induces an inclusion $Y_{G,K}\hookrightarrow X^*$ and $X^*$ is canonically determined by $Y_{G,K}$. The boundary $X^*-Y_{G,K}$ is finite flat over $\Z_{(p)}$. The following is \cite[Theorem 8.2.1.3]{lanbis}.

\begin{lemma}[\cite{lanbis}]\label{lancuspminimal}
Let $\pi:X\to X^*$ be the canonical projection. Then we have $$\operatorname{R}^i\pi_*(\myom^{(\underline{k},w)}(-D))=0 \ \text{ for every } i>0.$$
\end{lemma}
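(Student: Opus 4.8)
The statement to prove is Lemma \ref{lancuspminimal}: for $\pi:X\to X^*$ the canonical projection to the minimal compactification, $R^i\pi_*(\myom^{(\underline{k},w)}(-D))=0$ for $i>0$.

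\medskip

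The plan is to reduce the assertion to the corresponding statement on the moduli spaces $\mathcal{M}^{\cc}$ and then invoke Lan's vanishing theorem directly. First I would note that $\pi$ factors, component by component, through the $\Delta(K)$-quotient maps: for each $[\cc]$ we have the toroidal compactification $\mathcal{M}^{\cc,\tor}_K$, its minimal compactification $\mathcal{M}^{\cc,*}_K$ (constructed in \cite{chai,lan13}), the canonical projection $\pi_{\cc}:\mathcal{M}^{\cc,\tor}_K\to\mathcal{M}^{\cc,*}_K$, and the finite group $\Delta(K)$ acting freely on both (freely on the toroidal side since $K$ is neat and the boundary divisor carries a free action, and on the minimal side by functoriality of $\proj$ applied to $\det(\myom)$). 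Writing $q:\mathcal{M}^{\cc,\tor}_K\to\MM^{\cc,\tor}_K$ and $q^*:\mathcal{M}^{\cc,*}_K\to\MM^{\cc,*}_K$ for the quotient maps, we get a commutative square with $\pi\circ q = q^*\circ\pi_{\cc}$, and since $q,q^*$ are finite étale (hence exact on pushforward and faithfully flat), $R^i\pi_*\F = 0$ if and only if $q^*_*R^i\pi_*\F = 0$ if and only if $R^i(q^*\circ\pi_{\cc})_*(q^*\F) = R^i(\pi_{\cc})_*(q^*\F) = 0$ — here using that $q$ is affine so $R^iq_* = 0$ for $i>0$ and the Leray spectral sequence degenerates. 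Thus it suffices to prove $R^i(\pi_{\cc})_*(\myom^{(\underline{k},w)}(-\mathcal{D}))=0$ for $i>0$ on each moduli component, where now $\myom^{(\underline{k},w)}$ denotes the sheaf before descent.

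\medskip

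The second step is to identify the sheaf $\myom^{(\underline{k},w)}$ on $\mathcal{M}^{\cc,\tor}_K$ with a sheaf of the form considered by Lan. By the remark in the excerpt, on each component $\mathcal{M}^{\cc}_K$ the line bundle $\bigotimes_\tau(\wedge^2\hh^1_\tau)$ is trivial (this extends to the toroidal compactification via the canonical extension $\hh^1$), so $\myom^{(\underline{k},w)}\cong\bigotimes_\tau\omega_\tau^{k_\tau}$ up to twisting by a trivial line bundle coming from the polarisation; in particular it is (up to a line bundle pulled back trivially) one of the automorphic line bundles $\myom^{\underline{k}}$ whose higher direct image vanishing along $\pi_{\cc}$ is precisely the content of \cite[Theorem 8.2.1.3]{lanbis} (the cuspidal version, with $-\mathcal{D}$). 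One should check that the weight hypotheses of that theorem — Lan proves the vanishing for the cuspidal subsheaf with no positivity constraint on the weight, which is why the statement here is unconditional on $(\underline{k},w)$ — are met; the point is that the $(-\mathcal{D})$-twist is exactly what makes the result hold for all weights.

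\medskip

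I expect the main obstacle to be a bookkeeping one rather than a conceptual one: matching conventions between Lan's setup and the one in this paper. Specifically, one must verify (a) that the minimal compactification $X^*$ defined here via $\proj$ of sections of $\det(\myom)$ agrees, componentwise after quotient, with Lan's minimal compactification, so that the projection $\pi$ is the one to which \cite[Theorem 8.2.1.3]{lanbis} applies; (b) that the boundary divisor $D$ (resp. $\mathcal{D}$) and its ideal sheaf correspond to the cuspidal ideal in Lan's formulation, including the subtlety that $D$ is a simple normal crossings divisor and the relevant twist is by $\Oo(-D)$ and not by a power; and (c) that the trivialisation of $\bigotimes_\tau(\wedge^2\hh^1_\tau)$ identifying $\myom^{(\underline{k},w)}$ with a standard automorphic bundle is compatible with extension to the toroidal boundary (i.e.\ one uses the canonical extension $\hh^1$). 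Once these identifications are in place the vanishing is immediate from \emph{loc. cit.}, and the descent argument of the first paragraph finishes the proof.
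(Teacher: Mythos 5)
Your proposal is correct and follows essentially the same route as the paper, which offers no argument beyond the citation of \cite[Theorem 8.2.1.3]{lanbis}: reduce to the PEL moduli components $\mathcal{M}^{\cc,\tor}_K$ and invoke Lan's vanishing for the subcanonical (cuspidal) extension, valid for all weights. Note that under the paper's standing assumption $\det(K)\cap \Oo_{F,+}^{\times}=(K\cap \Oo_F^{\times})^2$ the quotient by $\Delta(K)$ is componentwise an isomorphism, so your descent step (and the bookkeeping in (a)--(c), none of which causes a problem) is even simpler than you anticipate.
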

We finally recall (see for example \cite[2.11.2-2.11.3]{tianxiao}) that the Kodaira--Spencer isomorphism gives an isomorphism
\begin{equation}\label{eq:KS}
\tag{KS}
KS: \omega^{(\underline{2},0)}(-D)\to \Omega^n_{{X}/\Z_p}, \ \ \ \ \text{where }\omega^{(\underline{2},0)}=\bigotimes_{\tau \in \Sigma_{\infty}}\left(\left(\wedge^{2} \mathcal{H}_{\tau}^{1}\right)^{-1} \otimes {\omega}_{\tau}^{2}\right).
\end{equation}
\subsubsection{A more general definition}\label{sectiontorsors}
The automorphic vector bundle ${\omega}^{(\underline{k},w)}$ can also be defined using the theory of torsors. Compare for example with \cite[D\'{e}finition 4.4]{dimtil}, where however the twist by $\wedge^{2} \mathcal{H}^1$ is not consider. Let
\begin{displaymath}
\mathcal{T}=\underline{\operatorname{Isom}}_{\Oo_{\mathcal{M}}\otimes \Oo_F}(\Oo_{\mathcal{M}}\otimes \Oo_F, \myom).
\end{displaymath}
It is a $T=\res_{\Oo_F/\Z}\mathbb{G}_m$-torsor over $\mathcal{M}$, representing the functor sending a $\Z_{(p)}$-algebra $R$ to the set of isomorphisms of tuples $(A,\iota, \lambda, {\alpha_{K}}, \omega)$, where $(A,\iota, \lambda, {\alpha_{K}})$ is an abelian variety over $R$ with extra structure as above and $\omega: R\otimes \Oo_F\simeq e^*\Omega^1_{A/R}$ is a trivialisation of the conormal sheaf of $A/R$ with respect to the unit section $e$. We can decompose $\omega$ with respect to the $\Oo_F$-action and write $\omega=(\omega_\tau)_{\tau}$, where $\omega_{\tau}:R\simeq (e^*\Omega^1_{A/R})_{\tau}$. The action of $T$ is then given as follows: if $t=(t_{\tau})\in T(R)$, then
\begin{displaymath}
t\cdot (A,\iota, \lambda, {\alpha_{K}}, \omega)=(A,\iota, \lambda, {\alpha_{K}}, t\cdot \omega),
\end{displaymath}
where $t\cdot \omega=(t_\tau \cdot \omega_\tau)_\tau$. One can similarly define the $T$-torsor
\begin{displaymath}
\mathcal{L}=\underline{\operatorname{Isom}}_{\Oo_{\mathcal{M}}\otimes \Oo_F}(\Oo_{\mathcal{M}}\otimes \Oo_F,\wedge^2\mathcal{H}^1_{\dr}(\mathcal{A})).
\end{displaymath}
It represents the functor sending a $\Z_{(p)}$-algebra $R$ to the set of isomorphisms of tuples $(A,\iota, \lambda, {\alpha_{K}}, \eta)$, where $\eta: R\otimes \Oo_F\simeq \wedge^2H^1_{\dr}(A/R)$. The action of $T$ is given similarly as above. 

Let us write $\pi_{\mathcal{T}}:\mathcal{T} \to \mathcal{M}$, $\pi_{\mathcal{L}}:\mathcal{L} \to \mathcal{M}$ for the natural maps (corresponding to the forgetful functors). Let $(\underline{\kappa},\underline{n})\in \Z[\Sigma_{\infty}]\times \Z[\Sigma_{\infty}]$. We can consider the sheaves $(\pi_{\mathcal{T}})_*(\Oo_{\mathcal{T}}),(\pi_{\mathcal{L}})_*(\Oo_{\mathcal{L}})$. They both have an action of the torus $T$ and we can consider the component on which $T$ acts via the character $t\to t^{-\underline{\kappa}}$, respectively via the character $t\to t^{-\underline{n}}$ and denote the corresponding invertible sheaves over $\mathcal{M}$ by $(\pi_{\mathcal{T}})_*(\Oo_{\mathcal{T}})[-\underline{\kappa}]$ and $(\pi_{\mathcal{L}})_*(\Oo_{\mathcal{L}})[-\underline{n}]$ respectively. We define
\begin{displaymath}
\tilde{\myom}^{(\underline{\kappa},\underline{n})}:=(\pi_{\mathcal{T}})_*(\Oo_{\mathcal{T}})[-\underline{\kappa}]\otimes (\pi_{\mathcal{L}})_*(\Oo_{\mathcal{L}})[-\underline{n}].
\end{displaymath}
Sections of this sheaf are rules as in Definition \ref{defikatz}, where clearly (iii) is replaced by the analogous condition, with the discrepancy factor being $\prod a_{\tau}^{-\kappa_{\tau}}b_{\tau}^{-n_{\tau}}$. We obtain $\myom^{(\underline{k},\underline{n})}\simeq \tilde{\myom}^{(\underline{k},\underline{n})}$ and, in particular,
\begin{equation}\label{eq:isotorsoromega}
\myom^{(\underline{k},w)}\simeq \tilde{\myom}^{(\underline{k},\tfrac{w-\underline{k}}{2})},
\end{equation}
for $\underline{n}=\tfrac{w-\underline{k}}{2}$ in the case where $k_\tau\equiv w \mod 2$ for every $\tau$.

Moreover, we can observe that, using the exact sequence (\ref{hodgeexseq}), there is a natural map
\begin{displaymath}
s: \mathcal{T}\times_{\mathcal{M}} \mathcal{T'}:=\underline{\operatorname{Isom}}_{\Oo_{\mathcal{M}}\otimes \Oo_F}(\Oo_{\mathcal{M}}\otimes \Oo_F, \myom)\times \underline{\operatorname{Isom}}_{\Oo_{\mathcal{M}}\otimes \Oo_F}(\Oo_{\mathcal{M}}\otimes \Oo_F, \myom_{\mathcal{A}^{\vee}}^{\vee} )\to \underline{\operatorname{Isom}}_{\Oo_{\mathcal{M}}\otimes \Oo_F}(\Oo_{\mathcal{M}}\otimes \Oo_F,\wedge^2\mathcal{H}^1_{\dr}(\mathcal{A})).
\end{displaymath}
This map is defined over $R$ after fixing a generator of the principal ideal $\mathfrak{d}^{-1}$, which is coprime to $p$ under our assumptions.

The sheaf
$(\pi_{\mathcal{T}\times\mathcal{T'}})_*(s^*\Oo_{\mathcal{L}})$
has an action of the torus $T$. It is the sheaf $(\pi_{\mathcal{T}\times\mathcal{T'}})_*\Oo_{\mathcal{T}\times\mathcal{T'}}$ which would naturally have an action of $T\times T \ni (t,\epsilon)=\SmallMatrix{t\epsilon &0\\0&t^{-1}}$, but the pullback via $s$ makes the action of the first component trivial. Let
\begin{displaymath}
\hat{\myom}^{(\underline{\kappa},\underline{n})}:=(\pi_{\mathcal{T}})_*(\Oo_{\mathcal{T}})[-\underline{\kappa}]\otimes (\pi_{\mathcal{T}\times\mathcal{T'}})_*(s^*\Oo_{\mathcal{L}})[-\underline{n}].
\end{displaymath}

\begin{lemma}\label{lemmatildehat}
There is an isomorphism $\tilde{\myom}^{(\underline{\kappa},\underline{n})} \simeq \hat{\myom}^{(\underline{\kappa},\underline{n})}$.
\end{lemma}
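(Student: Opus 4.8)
The plan is to reduce the statement to an isomorphism of the two ``determinant factors'' and then to verify it by unwinding the torsor definitions. By construction, both $\tilde{\myom}^{(\underline{\kappa},\underline{n})}$ and $\hat{\myom}^{(\underline{\kappa},\underline{n})}$ have the shape $(\pi_{\mathcal{T}})_*(\Oo_{\mathcal{T}})[-\underline{\kappa}]\otimes(-)$, so it suffices to produce a natural isomorphism of $\Oo_{\mathcal{M}}$-modules
\[
(\pi_{\mathcal{T}\times\mathcal{T'}})_*(s^*\Oo_{\mathcal{L}})[-\underline{n}]\;\simeq\;(\pi_{\mathcal{L}})_*(\Oo_{\mathcal{L}})[-\underline{n}];
\]
tensoring with $(\pi_{\mathcal{T}})_*(\Oo_{\mathcal{T}})[-\underline{\kappa}]$ then gives the lemma. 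The assertion is local on $\mathcal{M}$, so I would argue with sections, that is, with ``rules'' in the sense of Definition~\ref{defikatz}.

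Recall that $s\colon\mathcal{T}\times_{\mathcal{M}}\mathcal{T'}\to\mathcal{L}$ is the map attached, after fixing the chosen generator of $\mathfrak{d}^{-1}$ (coprime to $p$), to the Hodge filtration (\ref{hodgeexseq}): it sends a pair $(\omega,\omega')$ of trivialisations of $\myom$ and of $\myom_{\mathcal{A}^{\vee}}^{\vee}$ to the trivialisation $\eta(\omega,\omega')$ of $\wedge^2\mathcal{H}^1_{\dr}(\mathcal{A})$ coming from the isomorphism $\wedge^2\mathcal{H}^1_{\dr}(\mathcal{A})\simeq\myom\otimes\myom_{\mathcal{A}^{\vee}}^{\vee}\otimes\mathfrak{d}^{-1}$ induced by (\ref{hodgeexseq}). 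In the coordinates $(t,\epsilon)$ on $T\times T$ used just before the statement --- in which $(t,\epsilon)$ rescales $\omega$ by $t\epsilon$ and $\omega'$ by $t^{-1}$ (up to inverses, depending on sign conventions) --- bilinearity of the wedge gives $\eta(t\epsilon\cdot\omega,\,t^{-1}\cdot\omega')=\epsilon\cdot\eta(\omega,\omega')$. Hence $s$ is equivariant for the surjection $T\times T\to T$ that kills the first factor and is the identity on the second, and moreover $s$ exhibits $\mathcal{T}\times_{\mathcal{M}}\mathcal{T'}$ as a torsor over $\mathcal{L}$ under the first factor $T\times\{1\}$ (compare relative dimensions over $\mathcal{M}$, and note that $(t,1)\cdot(\omega,\omega')=(t\cdot\omega,t^{-1}\cdot\omega')$ acts simply transitively on the fibres of $s$). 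Consequently $s^*\Oo_{\mathcal{L}}$ is the subsheaf of functions pulled back from $\mathcal{L}$, the first factor acts trivially on it, and since $\pi_{\mathcal{T}\times\mathcal{T'}}=\pi_{\mathcal{L}}\circ s$ with $\Oo_{\mathcal{L}}=(s_*\Oo_{\mathcal{T}\times_{\mathcal{M}}\mathcal{T'}})^{T\times\{1\}}$, the natural map identifies $(\pi_{\mathcal{T}\times\mathcal{T'}})_*(s^*\Oo_{\mathcal{L}})$ with $(\pi_{\mathcal{L}})_*(\Oo_{\mathcal{L}})$ $T$-equivariantly for the second factor. Unravelled in terms of rules: a section of $(\pi_{\mathcal{T}\times\mathcal{T'}})_*(s^*\Oo_{\mathcal{L}})[-\underline{n}]$ over an open $U\subset\mathcal{M}$ is a rule attaching to a tuple $(A,\iota,\lambda,\alpha_{K^p},\eta)$ an element of the structure ring, homogeneous of weight $-\underline{n}$ in $\eta$ (as the second factor scales $\eta$), which is exactly a section of $(\pi_{\mathcal{L}})_*(\Oo_{\mathcal{L}})[-\underline{n}](U)=\bigotimes_{\tau}(\wedge^2\mathcal{H}_{\tau}^{1})^{n_{\tau}}(U)$. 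This identification is functorial in $U$ and $\Oo_{\mathcal{M}}$-linear, hence glues to the desired isomorphism.

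The only genuine content, and hence the main thing to be careful about, is the bookkeeping of the two torus actions in the last step: one has to check that the $T$-action recorded on $s^*\Oo_{\mathcal{L}}$ --- the one used to define the $[-\underline{n}]$-isotypic component --- is the image of the torsor action on $\mathcal{L}$, equivalently that the ``first factor'' of $T\times T$, which scales $\omega$ and $\omega'$ in opposite directions, acts trivially on it. Both are immediate from the displayed equivariance of $\eta(-,-)$, which in turn uses only the exactness of (\ref{hodgeexseq}) and the fact that a generator of $\mathfrak{d}^{-1}$ has been fixed; everything else is formal, and no global input (such as the Kodaira--Spencer isomorphism) is required.
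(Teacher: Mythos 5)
Your proof is correct, and after the (identical) reduction to an isomorphism $(\pi_{\mathcal{T}\times\mathcal{T'}})_*(s^*\Oo_{\mathcal{L}})[-\underline{n}]\simeq(\pi_{\mathcal{L}})_*(\Oo_{\mathcal{L}})[-\underline{n}]$ it finishes by a genuinely different mechanism than the paper. The paper argues entirely with sections: since $\wedge^2\mathcal{H}^1_{\dr}(\mathcal{A})_\tau$ has rank one, any trivialisation $\eta$ equals a unit multiple $\lambda^{-1}(\alpha\otimes\beta)$ of an arbitrarily chosen decomposable one, so a rule $f$ defined on decomposable trivialisations is extended by hand via $\tilde f(x,\eta):=\lambda^{-\underline{n}}f(x,\alpha\otimes\beta)$, homogeneity giving independence of the choice, and the restriction map being the inverse. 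You instead make the geometry explicit: the computation $\eta(t\epsilon\cdot\omega,\,t^{-1}\cdot\omega')=\epsilon\cdot\eta(\omega,\omega')$ shows $s$ is equivariant for killing the first factor and exhibits $\mathcal{T}\times_{\mathcal{M}}\mathcal{T'}$ as a $T\times\{1\}$-torsor over $\mathcal{L}$, and flat descent of functions then identifies the pushforwards equivariantly for the second factor, hence on $[-\underline{n}]$-parts. Both arguments rest on the same fact (every trivialisation of $\wedge^2\mathcal{H}^1$ is locally decomposable, uniquely up to the antidiagonal torus); the paper's version is elementary and stays at the level of Katz-style rules, while yours is more structural and makes transparent why the first-factor action on $s^*\Oo_{\mathcal{L}}$ is trivial, which is exactly what the paper's explicit construction checks by hand. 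To make your version airtight you should say explicitly that $s$ is surjective — Zariski-locally all three torsors are trivial, because $\myom$, $\myom_{\mathcal{A}^{\vee}}^{\vee}$ and $\wedge^2\mathcal{H}^1$ are locally free of rank one over $\Oo_F\otimes\Oo_{\mathcal{M}}$, and in such a trivialisation $s$ becomes $(a,b)\mapsto ab$ — so that the descent identity $\Oo_{\mathcal{L}}=(s_*\Oo_{\mathcal{T}\times_{\mathcal{M}}\mathcal{T'}})^{T\times\{1\}}$ applies; and you should note that your reading of $s^*\Oo_{\mathcal{L}}$ as functions factoring through $s$ agrees with the paper's intended meaning (its local sections are rules in $\alpha\otimes\beta$), so the isotypic component you compute is indeed the one entering the definition of $\hat{\myom}^{(\underline{\kappa},\underline{n})}$.
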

\begin{proof}
We prove that there is an isomorphism of $\Oo_{\mathcal{M}}$-modules $(\pi_{\mathcal{T}\times\mathcal{T'}})_*(s^*\Oo_{\mathcal{L}})[-\underline{n}]\simeq (\pi_{\mathcal{L}})_*(\Oo_{\mathcal{L}})[-\underline{n}]$. 
To construct such an isomorphism we essentially use the fact that $\wedge^2\mathcal{H}^1_{\dr}(\mathcal{A})$ is an $\Oo_F\otimes \Oo_{\mathcal{M}}$-module of rank one. Local sections of the second sheaf are rules associating to $(x,\eta)$, where $x =(A,\iota, \lambda, {\alpha_{K}})\in \mathcal{M}(R)$ and $\eta =(\eta_{\tau})_\tau$, where $\eta_\tau : R \simeq \wedge^2 H^1_{\dr}(A_x/R)_\tau$, such that 
 such that
\begin{equation}\label{eq:rule1}
f(x,t\eta)=t^{-\underline{n}}f(x,\eta), \ \ \ \ \text{for every } t\in (R\otimes \Oo_F)^{\times}.
\end{equation}
Local sections of the first sheaf are rules associating to $(x,\alpha\otimes \beta)$, where $x =(A,\iota, \lambda, {\alpha_{K}})\in X(R)$ and $\alpha=(\alpha_{\tau})_\tau, \beta =(\beta_{\tau})_\tau$ and $\alpha_\tau : R \simeq \Omega^1(A_x/R)_{\tau}, \beta_\tau : R \simeq ((\Omega^1_{A_x^{\vee}/R})_{\tau})^{\vee}, \alpha_\tau \otimes \beta_{\tau} : R \simeq \wedge^2 H^1_{\dr}(A_x/R)_\tau$, satisfying 
\begin{equation}\label{eq:rule2}
f(x,t(\alpha\otimes \beta))=t^{-\underline{n}}f(x,\alpha\otimes \beta), \ \ \ \ \text{for every } t\in (R\otimes \Oo_F)^{\times}.
\end{equation}
Given $f$ as above, we define a local section $\tilde{f}$ of $(\pi_{\mathcal{L}})_*(\Oo_{\mathcal{L}})[-\underline{n}]$ as follows. Since $\wedge^2 H^1_{\dr}(A_x/R)_\tau$ is of rank one over $R$, given $\eta_\tau : R \simeq \wedge^2 H^1_{\dr}(A_x/R)_\tau$, we choose arbitrarily $\alpha, \beta$ and we find that there must exists $\lambda_\tau\in R^{\times}$ such that the following diagrams commute
\[
\begin{tikzcd}[column sep=2em, row sep=1em]
R \arrow[equal]{d} \arrow{r}{\eta_\tau} & \wedge^2 H^1_{\dr}(A_x/R)_\tau\arrow{d}[right]{\cdot \lambda_\tau} \\
R \arrow{r}{\alpha_\tau\otimes \beta_\tau} & \wedge^2 H^1_{\dr}(A_x/R)_\tau.
\end{tikzcd}
\] 
We then let $\tilde{f}(x,\eta):=\lambda^{-\underline{n}}f(x, \alpha\otimes \beta)$. We need to check this definition does not depend on the choice of $\alpha, \beta$. Since $\Omega^1(A_x/R)_{\tau}$ has rank one over $R$, any other trivialisation $\alpha'_{\tau}$ is of the form $\alpha'_{\tau}=\lambda_{1,\tau}\alpha_\tau$ for some $\lambda_{1,\tau}\in R^{\times}$ and similarly any other $\beta'$ is of the form $\lambda_{2,\tau}\beta_\tau$ for some $\lambda_{2,\tau}\in R^{\times}$. Therefore $\alpha'\otimes \beta'=\lambda_1\lambda_2(\alpha\otimes \beta)$ and $\eta=\lambda\lambda_1^{-1}\lambda_2^{-1}(\alpha'\otimes \beta')$. Thanks to (\ref{eq:rule2}) we have
\begin{displaymath}
\lambda^{-\underline{n}}f(x, \alpha\otimes \beta)= (\lambda\lambda_1^{-1}\lambda_2^{-1})^{-\underline{n}}f(x, \alpha'\otimes \beta')
\end{displaymath}
and hence $\tilde{f}$ is well defined and it satisfies (\ref{eq:rule1}) since $f$ satisfies (\ref{eq:rule2}). The natural \emph{restriction} map from $(\pi_{\mathcal{T}\times\mathcal{T'}})_*(s^*\Oo_{\mathcal{L}})[-\underline{n}]$ to $ (\pi_{\mathcal{L}})_*(\Oo_{\mathcal{L}})[-\underline{n}]$ is the inverse on the map we have just defined.
\end{proof}

Moreover, similarly as above, we can define a natural action of the units on $\mathcal{T}, \mathcal{T}', \mathcal{L}$ and descend these sheaves to the Shimura variety $X$ compatibly with the previous definitions. 
 
\subsubsection{Comparison with other works}\label{seccomparison}
We clarify the choices we made with respect to other works on the subject to help the reader who may want to compare this definition with the ones of \cite{AIP,tianxiao,redu,kisin,dimtil,katzpadic}. The definition of $\myom^{(\underline{k},w)}$ corresponds to the one of $\dot{{\omega}}^{\kappa}$, ${\omega}^{\kappa}$ for $\kappa=(\underline{k},w)$ in \cite[$\S$ 2.2]{redu} for the sheaf over $\mathcal{M}$ and $X$ respectively and to $\underline{\omega}^{(\underline{k},w+2)}$ in \cite[$\S$ 2.12]{tianxiao}. Similarly as in \emph{op. cit.}, our definition of the action of the units $(\Oo_F^{\times})^+$ is both on the polarisation and on the sheaves of differentials. 

In \cite{kisin,dimtil,katzpadic} the sheaf considered is $\otimes_{\tau}\omega_{\tau}^{k_\tau}$. In fact, in \cite[$\S$ 1.11]{kisin} and \cite[$\S$ 1.2]{katzpadic}, the authors only work with the moduli space $\mathcal{M}$ and do not consider the Shimura variety for $G$. This however brings some complications when defining Hecke operators for the ideals $\mathfrak{p}\mid p$. For example in \cite[(1.11.6)]{kisin} the Hecke operator is defined by carefully considering a trivialisation of the sheaf $\bigotimes_{\tau \in \Sigma_{\infty}}\left(\wedge^{2} \mathcal{H}_{\tau}^{1}\right)$.

In \cite{AIP}, the authors also consider the sheaf $\omega^{\underline{k}}=\otimes_{\tau}\omega_{\tau}^{k_\tau}$, but they work both with the moduli space and the Shimura variety for $G$, however their \emph{descent} is different from the one considered here. The action of the units on sections $f$ in $H^0(X,\omega^{\underline{k}})$ is given by
\begin{displaymath}
\epsilon\cdot f(A, \iota,\lambda,\alpha_{K^p}, \omega):= \prod_\tau \epsilon_\tau^{-(w-k_{\tau})/2}f(A,\iota,\epsilon^{-1}\lambda,\alpha_{K^p},\omega).
\end{displaymath}
If we fix a polarisation class $\cc$, this definition can be thought as follows: the sheaf $\bigotimes_{\tau \in \Sigma_{\infty}}\left(\wedge^{2} \mathcal{H}_{\tau}^{1}\right)^{(w-k_{\tau})/2}$ is trivial but it carries a non-trivial action of the units. The action of the units of \cite{AIP} is therefore given both on the polarisation and on the $\wedge^{2} \mathcal{H}^1$ factor, but not on the sheaf $\omega^{\underline{k}}$ itself. This choice results, when defining Hecke operators, in a normalisation differing from the classical one (the one we define in \ref{secHecke}) by a power  $-(w-k_{\tau})/2$ factor, as explained in \cite[Remark 4.7]{AIP}.
 
In some sense, here and in \cite{tianxiao,redu}, $w$ is encoded in the definition of the sheaf, whereas in the other mentioned works it comes in only when defining the action of the units. 
\subsubsection{BGG decomposition (and higher coherent cohomology)}
For a weight $(\underline{k}, w)$ as above let
\begin{displaymath}
\mathcal{F}_\tau^{(\underline{k}, w)}:=\operatorname{Sym}^{k_\tau -2}\hh^1_\tau \otimes (\wedge^2\hh^1_{\tau})^{\tfrac{w-k_{\tau}}{2}}, \ \ \ \mathcal{F}^{(\underline{k}, w)}:=\otimes_{\tau}\mathcal{F}_\tau^{(\underline{k}, w)}.
\end{displaymath}
The extended Gauss-Manin connection on $\mathcal{H}^1$ induces an integrable connection
\begin{displaymath}
\nabla:  \mathcal{F}^{(\underline{k}, w)}\to  \mathcal{F}^{(\underline{k}, w)} \otimes \Omega^1_{X_{G,K}}(\log D).
\end{displaymath}
One can show (see \cite[$\S$ 2.12]{tianxiao}) that $( \mathcal{F}^{(\underline{k}, w)}, \nabla)$ gives an integral model of the automorphic bundle on $X_{G,K}(\C)$ given by the representation of $G_{\C}$
\begin{displaymath}
\rho^{(\underline{k},w)}:=\bigotimes_\tau \left( \operatorname{Sym}^{k_{\tau}-2}(\check{\operatorname{St}}_\tau)\otimes \operatorname{det}^{-\tfrac{w-k_{\tau}}{2}}_{\tau}\right),
\end{displaymath}
where $\check{\operatorname{St}}_\tau$ is the $\tau$-projection of the dual of the standard representation of $G_{\C}=(\GL_{2,\C})^{\Sigma_{\infty}}$ and $\det_{\tau}$ is the $\tau$-projection of the determinant map. 

In the next chapters, we will study the cohomology of the sheaves $\myom^{(\underline{k}, w)}$ in degree zero (hence classical \emph{holomorphic} Hilbert modular forms) and in positive degrees. One reason for which the cohomology of higher degree is also interesting is the fact that it contributes to the middle degree de Rham cohomology of the Hilbert modular variety. More precisely, the de Rham complex of $\mathcal{F}^{(\underline{k}, w)}$ is quasi-isomorphic to a simpler complex, called the dual BGG-complex (see \cite[$\S$3,7]{faltings82}, \cite[$\S$ 5]{faltchai} and for an overview with examples \cite[$\S$ 2.3]{lanauto}). In this context, the Weyl group $W_G$ is isomorphic to $\{\pm\}^{\Sigma_{\infty}}$. For $J\subset \Sigma_{\infty}$ we denote by $s_J$ the Weyl element whose $\tau$-component is $-1$ if $\tau\not\in J$ and is equal to $1$ is $\tau \in J$. If we work over $\C$, we find the following decomposition
\begin{displaymath}
H^n_{\dr}(Y_{G,K}(\C),\mathcal{F}^{(\underline{k}, w+2)}_{\C})\simeq \bigoplus_{J\subset \Sigma_{\infty}}H^{\# J}(X_{G,K}(\C), \myom^{(s_J\cdot \underline{k}, w)}_{\C}),
\end{displaymath}
where the action of the Weyl group on $\underline{k}$ is given by $(s_J\cdot \underline{k})_\tau = 2-k_{\tau}$ if $\tau\not\in J$ and $(s_J\cdot \underline{k})_\tau = k_{\tau}$ if $\tau\in J$.
A more detailed discussion can be found in \cite[$\S$ 2.15]{tianxiao}. 
 
\section{Hecke operators and Hasse invariants}\label{secheckeandhasse}
From now on we assume that $p\geq 5$ splits completely in $F$ and we write 
\begin{displaymath}
(p)=\p_1 \cdots \p_n.
\end{displaymath}
Recall that $L$ is a Galois extension of $\Q$ containing the totally real field $F$. We fix once for all an embedding $\iota: L\hookrightarrow \bar{\Q}_p$. This fixes a prime $\wp$ of $L$ above $p$ and we consider the ring of integers $R$ of the completion of $L$ at $\wp$ and the residue field $\Ff$. The set $\Sigma_{\infty}$ of embeddings of $F$ in $\R$ is then identified with the set of $p$-adic embeddings $F\hookrightarrow \bar{\Q}_p$ and therefore with 
\begin{equation}\label{eq:sigma}
\Sigma_{\infty}=\Hom_{\Z}(\Oo_F, \Ff)=\{\p \subset \Oo_F: \p\mid p\}.
\end{equation}
From now on we will denote by $\tau_{\p}$ the element in $\Sigma_{\infty}$ such that $\iota\circ \tau_{\p}$ induces the place $\p$. Notice that from the above identification there is a natural action of the Frobenius automorphism $\sigma$ of $\Ff$ on $\Sigma_{\infty}$ (given by the composition $\tau\circ \sigma$); in this particular setting, where $p$ splits completely in $F$, this action is trivial.

We also fix the choice of a sufficiently small neat open compact subgroup $K=K^pK_p$ of $G(\A_f)$ such that $K_p=\prod_{\p\mid p}\GL_2(\Z_p)$. We let $\mathcal{M}=\mathcal{M}^{\tor}_K\times_{\Z_{(p)}}R$ and $X=X_{G,K}\times_{\Z_{(p)}}R$. We still denote by $D$ the boundary divisor in $X$.

\subsection{Hecke operators}\label{secHecke}  
Consider $\p$ one of the primes above $p$. Let $\mathcal{M}_0(\p)=\mathcal{M}(\p)^{\tor}_K\times_{\Z_{(p)}}R$ and $X_0(\p)=X_{G,K}(\p)\times_{\Z_{(p)}}R$. We want to study the cohomolgical correspondence obtained by the maps in (\ref{eq:p1}) and (\ref{eq:p2})
\[
\begin{tikzcd}[column sep=1em]
 & X_0(\p) \arrow{dr}[right]{p_2} \arrow{dl}[left]{p_1}\\
X && X.
\end{tikzcd}
\]
For general background and notation on correspondences and coherent cohomology, we refer for example to \cite[$\S$ 4]{pilloni}. We denote by $\mathcal{A}$ the universal semi-abelian variety $\mathcal{A}^{\tor}\to \mathcal{M}$. The maps $p_1,p_2$ parametrise an isogeny $p_1^{\star}\mathcal{A}\to p_2^{\star}\mathcal{A}$ of degree $p$ and with kernel annihilated by $\p$. From this isogeny we get a rational map $p_2^{\star}{\omega}^{(\underline{k},w)} \dashrightarrow p_1^{\star}{\omega}^{(\underline{k},w)}$ of sheaves over $\mathcal{M}_0(\p)$. Since this map is equivariant under the action of $\Delta(K)$, we get an analogous map of sheaves over $X_0(\p)$.
We now consider the map, called ''fundamental class'', constructed more generally in \cite[$\S$4.2]{pilloni} (see also \cite[$\S$3.8.11-3.8.16]{modularsurf}). It is a map 
$$\Theta:  p_1^\star\Oo_{X}\to p_1^!\Oo_X.$$
Tensoring $p_2^{\star}{\omega}^{(\underline{k},w)} \dashrightarrow p_1^{\star}{\omega}^{(\underline{k},w)}$ with $\Theta$, we get the \emph{naive} cohomological correspondence
\begin{displaymath}
\tilde{T}_{\p, (\underline{k},w)}: p_2^{\star}{\myom}^{(\underline{k},w)} \dashrightarrow p_1^{!}{\myom}^{(\underline{k},w)}.
\end{displaymath}
We finally normalise it letting
\begin{displaymath}
T_{\p, (\underline{k},w)}:=p^{-\inf\lbrace\tfrac{w-k_{\p}}{2}+1,\tfrac{w+k_{\p}}{2}\rbrace}\tilde{T}_{\p, (\underline{k},w)}=\begin{cases}
p^{-\tfrac{w-k_{\p}}{2}-1}\tilde{T}_{\p, (\underline{k},w)} &\text{if }k_{\p}\geq 1 \\
p^{-\tfrac{w+k_{\p}}{2}}\tilde{T}_{\p, (\underline{k},w)} &\text{if }k_{\p}< 1,
\end{cases}
\end{displaymath}
where, if $p=\p_1\cdots \p_n$ and $\p=\p_i$, we write $k_{\p}=k_i$. To simplify the notation we will often denote simply by $T_\p$ the operator $T_{\p, (\underline{k},w)}$ for the automorphic sheaf ${\myom}^{(\underline{k},w)}$.
\begin{prop}\label{propTpnorm}
$T_{\p}$ is a cohomological correspondence $p_2^{\star}{\omega}^{(\underline{k},w)} \to p_1^{!}{\omega}^{(\underline{k},w)}$ (i.e. well defined and optimally integral). Moreover it is supported on the \'{e}tale locus (respectively multiplicative locus) if $k_{\p}> 1$ (resp. $k_{\p}< 1$).
\begin{proof}
We only need to check that $T_\p$ is well defined on the complement of a codimension 2 locus. Since it is well defined over $\Q_p$, we only need to verify the statement locally at generic points of the special fibre of the open variety $Y_0(\p)$. Such locus $V$ is smooth and the fundamental class is uniquely determined by its definition on $V$ and there it is defined to be the determinant of the map d$p_1 : \Omega^1_U \otimes \Oo_V \to \Omega^1_{V}$, where $U$ is a smooth open subset of $X$ such that $p_1(V)\subset U$. Therefore, by \cite[Lemma 4.2.3.1]{pilloni}, the fundamental class is given on $V$ by the trace map $\tr_{p_1}: \Oo_{V}\to p_1^!\Oo_U$.

Since the isogeny $p_1^{\star}\mathcal{A}\to p_2^{\star}\mathcal{A}$ is a $\p$-isogeny, for every generic point of the special fibre the map $p_2^{\star}(\wedge^2\hh^1_{\tau})\to p_1^{\star}(\wedge^2\hh^1_{\tau})$ is an isomorphism for $\tau$ not corresponding to $\p$ and factors through an isomorphism $p_2^{\star}(\wedge^2\hh^1_{\p})\to p(p_1^{\star}(\wedge^2\hh^1_{\p}))$ otherwise. 

We say that the generic point $\xi$ is multiplicative (respectively \'{e}tale) if the kernel of the isogeny $p_1^{\star}\mathcal{A}\to p_2^{\star}\mathcal{A}$ is the multiplicative (respectively constant) $p$-group scheme. We have the following characterisation of the maps $p_1,p_2$ on the open varieties
\begin{center}
\begin{tabular}{ c|c|c } 
 & \'{e}tale & multiplicative\\ 
 \hline
 $p_1$ & totally ramified of degree $p$ & isomorphism \\ 
 \hline
 $p_2$ & isomorphism & totally ramified of degree $p$ \\ 
\end{tabular}
\end{center}
In particular $(\tr_{p_1})_{\xi}: (\Oo_{X_0(\p)})_{\xi}\to (p_1^!\Oo_X)_{\xi}$ is an isomorphism at points in the multiplicative locus and factors through an isomorphism $(\tr_{p_1})_{\xi}: (\Oo_{X_0(\p)})_{\xi}\to p(p_1^!\Oo_X)_{\xi}$ at points of the \'{e}tale locus.

Moreover, the isogeny $p_1^{\star}\mathcal{A}\to p_2^{\star}\mathcal{A}$ is separable if and only if the kernel is \'{e}tale. Hence we have that for $\xi$ in the \'{e}tale locus, the differential map $(p_2^{\star}\omega)_\xi \to (p_1^{\star}\omega)_\xi$ is an isomorphism. And overall we find that for such $\xi$, the correspondence factors as in isomorphism
\begin{equation}
\tag{$\xi$ \'{e}tale}
\tilde{T}_{\p}: (p_2^{\star}{\omega}^{(\underline{k},w)})_{\xi} \xrightarrow{\simeq} p\cdot p^{\tfrac{w-k_{\p}}{2}}(p_1^{!}{\omega}^{(\underline{k},w)})_{\xi}.
\end{equation}

On the other hand, if $\xi$ is in the multiplicative locus, the differential map is an isomorphism of the components of the differential sheaves different from the one corresponding to $\p$ and factors through an isomorphism $(p_2^{\star}\omega_{\p})_\xi \to p(p_1^{\star}\omega_{\p})_\xi$ at the component corresponding to $\p$. Hence we get an isomorphism
\begin{displaymath}
(p_2^{\star}\omega^{(\underline{k},w)})_\xi \xrightarrow{\simeq} p^{k_{\p}+\tfrac{w-k_{\p}}{2}}(p_1^{\star}\omega^{(\underline{k},w)})_\xi.
\end{displaymath}
Overall we find that for $\xi$ in the multiplicative locus, the correspondence factors as in isomorphism
\begin{equation}
\tag{$\xi$ multiplicative}
\tilde{T}_{\p}: (p_2^{\star}{\omega}^{(\underline{k},w)})_{\xi} \xrightarrow{\simeq} p^{\tfrac{w+k_{\p}}{2}}(p_1^{!}{\omega}^{(\underline{k},w)})_{\xi}.
\end{equation}
Hence we have shown that multiplying $T_\p^{naive}$ by $p^{-\inf\lbrace\tfrac{w-k_{\p}}{2}+1,\tfrac{w+k_{\p}}{2}\rbrace}$ gives a well defined correspondence, optimally integral and that this vanishes on the multiplicative locus if $\tfrac{w-k_{\p}}{2}<\tfrac{w+k_{\p}}{2}-1$ and on the \'{e}tale locus if $\tfrac{w-k_{\p}}{2}>\tfrac{w+k_{\p}}{2}-1$. 
\end{proof}
\end{prop}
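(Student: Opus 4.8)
The plan is to verify the proposition by a local computation at the generic points of the special fibre of the open variety $Y_0(\p)$, since over $\Q_p$ the naive correspondence $\tilde{T}_{\p,(\underline{k},w)}$ is already well defined (it comes from a genuine isogeny of abelian varieties), and since the sheaves involved are invertible, being well defined away from a codimension $2$ locus forces the correspondence to be a morphism of sheaves everywhere. So the task reduces to reading off, at each generic point $\xi$ of the special fibre, the precise power of $p$ by which $\tilde{T}_{\p}$ fails to be an isomorphism, and checking that the uniform normalising factor $p^{-\inf\{\frac{w-k_\p}{2}+1,\frac{w+k_\p}{2}\}}$ makes it optimally integral while respecting the claimed vanishing behaviour.

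First I would stratify the generic points of the special fibre of $Y_0(\p)$ according to whether the kernel of the $\p$-isogeny $p_1^\star\mathcal{A}\to p_2^\star\mathcal{A}$ is the multiplicative group scheme $\mu_p$ or the constant group scheme $\Z/p$; call these the multiplicative and étale generic points. For each case I would tabulate the behaviour of the two projections $p_1,p_2$: in the étale case $p_2$ is an isomorphism of local rings and $p_1$ is totally ramified of degree $p$, and vice versa in the multiplicative case. This immediately tells me how the trace map $\tr_{p_1}\colon\Oo_{X_0(\p)}\to p_1^!\Oo_X$ behaves at $\xi$: it is an isomorphism on the multiplicative locus and an isomorphism onto $p\cdot(p_1^!\Oo_X)_\xi$ on the étale locus. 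Next I would analyse the differential map $p_2^\star\myom\to p_1^\star\myom$ coming from the isogeny: separability of the isogeny is equivalent to its kernel being étale, so at étale points this map is an isomorphism in every $\Oo_F$-component, while at multiplicative points it is an isomorphism in the components $\tau\ne\tau_\p$ and divides by exactly $p$ in the $\tau_\p$-component. Likewise the induced map on $\wedge^2\hh^1_\tau$ is an isomorphism for $\tau\ne\tau_\p$ and has a simple zero (a factor of $p$) in the $\tau_\p$-component, at every generic point regardless of stratum.

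Assembling these ingredients according to the definition $\myom^{(\underline{k},w)}=\bigotimes_\tau\big((\wedge^2\hh^1_\tau)^{(w-k_\tau)/2}\otimes\omega_\tau^{k_\tau}\big)$ and remembering the extra trace twist, I would get: at an étale $\xi$, $\tilde{T}_\p$ identifies $(p_2^\star\myom^{(\underline{k},w)})_\xi$ with $p\cdot p^{(w-k_\p)/2}(p_1^!\myom^{(\underline{k},w)})_\xi$ — the $p^{(w-k_\p)/2}$ from the $\wedge^2\hh^1_{\tau_\p}$ factor raised to the power $(w-k_\p)/2$, the extra $p$ from the trace; at a multiplicative $\xi$, it identifies the source with $p^{k_\p+(w-k_\p)/2}=p^{(w+k_\p)/2}(p_1^!\myom^{(\underline{k},w)})_\xi$ — here the differential contributes $p^{k_\p}$ from the $\omega_{\tau_\p}^{k_\p}$ factor (note $k_\p$ can be negative, so this may be a pole), the $\wedge^2$ factor again contributes $p^{(w-k_\p)/2}$, and the trace contributes nothing. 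Therefore multiplying by $p^{-\inf\{\frac{w-k_\p}{2}+1,\frac{w+k_\p}{2}\}}$ clears the denominators in both strata and leaves an isomorphism in at least one of them, which is exactly optimal integrality; and one checks that the factor $p$ in the étale exponent is the smaller one precisely when $\frac{w-k_\p}{2}+1<\frac{w+k_\p}{2}$, i.e. $k_\p>1$, in which case $T_\p$ is an isomorphism (hence nonzero) on the étale locus and vanishes on the multiplicative locus, with the reverse for $k_\p<1$.

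The main obstacle I anticipate is bookkeeping the $\Oo_F$-equivariant structure carefully enough to be sure the powers of $p$ land in the right graded pieces: one must track separately the $\tau_\p$-component and the other components of both $\omega$ and $\wedge^2\hh^1$ through the isogeny, and handle the sign of $k_\p$ so that ``divides by $p$'' versus ``has a pole'' is stated correctly. A secondary subtlety is justifying the reduction to generic points of the special fibre — this needs that the complement of the generic points in the special fibre has codimension $\geq 2$ in $Y_0(\p)$, i.e. that the bad (non-ordinary, or boundary, or where the moduli problem is not smooth) locus is small, together with the normality of $\mathcal{M}_0(\p)$ and $X_0(\p)$ so that a map of line bundles defined in codimension $1$ extends; the descent of everything from the moduli space $\mathcal{M}_0(\p)$ to $X_0(\p)$ via $\Delta(K)$-equivariance is routine given the setup in the preliminaries. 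I do not expect the argument to require anything beyond the standard dictionary between isogenies of (semi-)abelian schemes and the induced maps on invariant differentials and de Rham cohomology.
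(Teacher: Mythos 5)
Your proposal is correct and follows essentially the same route as the paper's proof: reduction to generic points of the special fibre, the étale/multiplicative dichotomy with the same table for $p_1,p_2$, the same accounting of the trace, the differential map and the $\wedge^2\hh^1_\p$ factor, yielding the exponents $\tfrac{w-k_\p}{2}+1$ and $\tfrac{w+k_\p}{2}$ and hence the stated normalisation and support. The computed powers of $p$ and the criterion $k_\p\gtrless 1$ all agree with the paper.
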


We obtain that the operator $T_\p$ induces by adujunction a map on cohomology obtained by the following composition
$$\RG(X, \myom^{(\underline{k},w)}) \xrightarrow{p_2^{\star}} \RG(X_0(\p), p_2^{\star}\myom^{(\underline{k},w)}) \xrightarrow{T_\p} \RG(X, \myom^{(\underline{k},w)}). $$
Hence we view $T_\p\in\operatorname{End}(\RG(X, \myom^{(\underline{k},w)}))$. We can similarly obtain a map on cuspidal cohomology and see the operator as $T_\p\in\operatorname{End}(\RG(X, \myom^{(\underline{k},w)}(-D)))$ (see \cite[Lemma 4.2.4.1]{pilloni}). 

We now want to understand the behaviour of this correspondence with respect to duality. We follow \cite[$\S$ 3.2]{boxer2020higher}, to which we refer for notation and a recap on duality. First, we need to fix some notation regarding dual isogenies. We consider the $\p$-isogeny $\pi:p_1^{\star}\mathcal{A}\to p_2^{\star}\mathcal{A}$. To be more precise, we have isogenies $\pi: p_1^{\star}\mathcal{A}^{\cc}\to p_2^{\star}\mathcal{A}^{\cc'}$ for our fixed choice of representatives $\cc,\cc'\in Cl_F^+$, where $\cc'$ is in the same class as $\cc\p$ but coprime to $\p$. We can consider the dual isogeny $\pi^{\vee}:(p_2^{\star}\mathcal{A^{\cc\p}})^{\vee}\to (p_1^{\star}\mathcal{A}^{\cc})^{\vee}$. Using the universal polarisations we obtain a map $p_2^{\star}\mathcal{A^{\cc'}}\otimes \cc' \to p_1^{\star}\mathcal{A}^{\cc}\otimes \cc$. Since we are working over $R$ and we have taken every $\cc,\cc'$ to be coprime to $p$, quotienting out by the units, we obtain a well-defined isogeny $\pi^D:p_2^{\star}\mathcal{A}\to p_1^{\star}\mathcal{A}$ and for any $(\underline{k},w)$, the pullback map on differentials 
\[
\pi^{D}_{(\underline{k},w)}:p_1^{\star}{\omega}^{(\underline{k},w)} \dashrightarrow p_2^{\star}{\omega}^{(\underline{k},w)}.
\]
We can then define $T_{\p,(\underline{k},w)}^t$ as $T_{\p,(\underline{k},w)}$ above, but replacing the pullback by $\pi$ with $\pi^{D}_{(\underline{k},w)}$.

We also recall that the composition $\pi\circ \pi^{D}$, seen as an isogeny $p_1^{\star}\mathcal{A}\to p_1^{\star}\mathcal{A}$ can be identified with the isogeny of kernel $p_1^{\star}\mathcal{A}[\p]$, depending on the choice of a uniformiser of $\Oo_{F,\p}$ (cfr \cite[3.6.3]{modularsurf} and \cite[3.10]{tianxiao}). This is done again using the universal polarisations and the fact that their types are coprime to $p$ to decompose such isogeny (which must be of degree $p^2$ and kernel contained in $p_1^{\star}\mathcal{A}[\p]$) as:
\[
\begin{tikzcd}
    p_1^{\star}\mathcal{A}^{\cc} 
    \arrow[r, "\pi"] 
    & p_2^{\star}\mathcal{A}^{\cc'} 
    \arrow[r, "\simeq"]\arrow[rrrr, bend right=10, "\pi^D"]
    & p_2^{\star}\mathcal{A}^{\cc\p} 
    \arrow[r, "\simeq"]
    & p_2^{\star}\mathcal{A}^{\cc\p,\vee} \otimes (\cc\p)^{-1} 
    \arrow[r, "\pi^\vee"] 
    & p_1^{\star}\mathcal{A}^{\cc,\vee} \otimes (\cc\p)^{-1} 
    \arrow[r, "\simeq"] 
    & p_1^{\star}\mathcal{A}^{\cc} \otimes \p^{-1} =  p_1^{\star}\mathcal{A}^{\cc} / p_1^{\star}\mathcal{A}^{\cc}[\p].
\end{tikzcd}
\]
Hence we find that  
\begin{equation}\label{eqdual}
\pi_{(\underline{k},w)}\circ \pi^{D}_{(\underline{k},w)}=p^{k_{\p}+2\tfrac{w-k_{\p}}{2}}\id=p^w\id.
\end{equation}

{\begin{prop}\label{dualtp}
The dual of the Hecke operator $T_{\p,(\underline{k},w)}$ acting on the cohomology of $\myom^{(\underline{k},w)}$ is equal to the Hecke operator $T^t_{\p,(2-\underline{k},-w)}$ acting on the cohomology of $\myom^{(2-\underline{k},-w)}(-D)$, where $(2-\underline{k})_{\tau}=2-k_\tau$.
\begin{proof}
We proceed similarly as in the proof of \cite[Proposition 3.6]{boxer2020higher}. The Kodaira--Spencer isomorphism \eqref{eq:KS} induces isomorphisms
\begin{displaymath}
p_i^\star KS: p_i^\star \myom^{(\underline{2},0)}\to p_i^\star \Omega^n_{X/\Z_p}(D), \ \ \ \ \text{where }\myom^{(\underline{2},0)}=\bigotimes_{\tau \in \Sigma_{\infty}}\left(\left(\wedge^{2} \mathcal{H}_{\tau}^{1}\right)^{-1} \otimes {\omega}_{\tau}^{2}\right).
\end{displaymath}
$\text{where }\myom^{(\underline{2},0)}=\bigotimes_{\tau \in \Sigma_{\infty}}\left(\left(\wedge^{2} \mathcal{H}_{\tau}^{1}\right)^{-1} \otimes {\omega}_{\tau}^{2}\right).$ Then it can be observed as in \cite[Lemma 3.7]{boxer2020higher} that the composition
\[
\operatorname{Tr}_{p_2}\operatorname{Tr}_{p_1}^{-1}\otimes (\pi^{D})^{\star}: p_1^!\Oo_X\otimes p_1^\star\myom^{(\underline{2},0)}(-D)\longrightarrow p_2^!\Oo_X\otimes p_2^\star \myom^{(\underline{2},0)}(-D)
\]
is the identity on the sheaf of differentials of $X_0(\p)$. 
Here, by abuse of notation, we write $\operatorname{Tr}_{p_1}$ for the fundamental class $\Theta$ and, similarly $\operatorname{Tr}_{p_2}$ for the fundamental class associated to the morphism $p_2$. Moreover note that 
\begin{equation}\label{eq:twistKS}
\myom^{(\underline{2},2)}=\myom^{(\underline{2},0)}\otimes \left(\bigotimes_{\tau \in \Sigma_{\infty}}\wedge^{2} \mathcal{H}_{\tau}^{1}\right)=\bigotimes_{\tau \in \Sigma_{\infty}}{\omega}_{\tau}^{2}.
\end{equation}
In particular the complexes $\RG(X,\myom^{(\underline{2},2)})$ and $\RG(X,\myom^{(\underline{2},0)})$ are isomorphic after twisting the $G$-action on the first one by the inverse adelic norm of the determinant. The map $\pi^{D,\star}$ is the one induced by the double coset of a matrix 
of determinant $\lambda_\p$, where $\lambda_{\p}$ is a local uniformiser of $\Oo_{F,\p}$ and $\|\lambda_\p\|=p^{-1}$. This is why in this setting we don't have the multiplication by $p$ appear as in \cite[Lemma 3.7]{boxer2020higher}, where in the case of the modular curve the sheaf considered is $\myom^2$ and not $\myom^2\otimes (\wedge^{2} \mathcal{H}^{1})^{-1}$.

We need to compare $\tilde{T}_\p^t=\tilde{T}^t_{\p,(2-\underline{k},-w)}$ with
$D(\tilde{T}_{\p})$ where $\tilde{T}_{\p}$ is the operator in weight $(\underline{k},w)$ acting on cohomology via
$$\RG(X, \myom^{(\underline{k},w)}) \xrightarrow{p_2^{\star}} \RG(X_0(\p), p_2^{\star}\myom^{(\underline{k},w)}) \xrightarrow{\tilde{T}_{\p}} \RG(X_0(\p), p_1^{!}\myom^{(\underline{k},w)}) \xrightarrow{\operatorname{Tr}_{p_1}} \RG(X, \myom^{(\underline{k},w)}).$$
It dualises to an operator
$$D(\RG(X, \myom^{(\underline{k},w)})) \xrightarrow{p_1^{\star}} \RG(X_0(\p), p_1^{\star}D(\myom^{(\underline{k},w)})) \xrightarrow{D(\tilde{T}_{\p})} \RG(X_0(\p), p_2^{!}D(\myom^{(\underline{k},w)})) \xrightarrow{\operatorname{Tr}_{p_2}} D(\RG(X, \myom^{(\underline{k},w)})), $$
where $D(\tilde{T}_{\p})$ can be written as the composition $$D(\tilde{T}_{\p}): p_1^{\star}({\omega}^{(-\underline{k},-w)}\otimes \Omega^n_{X/\Z_p}) \xrightarrow{\operatorname{id}\otimes \operatorname{Tr}_{p_1}}p_1^{!}({\omega}^{(-\underline{k},-w)}\otimes \Omega^n_{X/\Z_p})\to p_2^{!}({\omega}^{(-\underline{k},-w)}\otimes \Omega^n_{X/\Z_p}),$$
where the second map is $(\pi^\star)^{-1}\otimes id: p_1^\star\myom^{(-\underline{k},-w)}\otimes p_1^!\Omega^n_{X/\Z_p}\to p_2^\star\myom^{(-\underline{k},-w)}\otimes p_2^!\Omega^n_{X/\Z_p}$.
Applying the Kodaira-Spencer isomorphism as above we identify
\[
\omega_{X_0(\p)}=p_i^! \Omega^n_{X/\Z_p}= p_i^!\Oo_X\otimes p_i^\star \myom^{(\underline{2},0)} =  p_i^!\Oo_X\otimes p_i^\star \myom^{(\underline{2},2)}\otimes p_i^\star(\otimes_{\tau \in \Sigma_{\infty}}(\wedge^{2} \mathcal{H}_{\tau}^{1})^{-1})
\]
and the identity map as $\operatorname{Tr}_{p_2}\operatorname{Tr}_{p_1}^{-1}\otimes (\pi^{D})^{\star}$. 
As explained in \eqref{eqdual} above, we have
\begin{displaymath}
\pi_{(-\underline{k},-w)}\circ \pi_{(-\underline{k},-w)}^{D}= p^{-w}.
\end{displaymath}
Putting everything together the operator $D(\tilde{T}_{\p})$ acts as
$$p^{w}\tilde{T}_{\p}^t = \tr_{p_2}\otimes (\pi_{(\underline{2},0)}^{D}\otimes p^{w}\pi_{(-\underline{k},-w)}^{D})\in \End(\RG(X,\myom^{(2-\underline{k},-w)}(-D) ))$$

From the equalities
\[
\begin{cases}
-\tfrac{w+k}{2} + w  \\
-\tfrac{w-k}{2}-1 +w\end{cases}
= \begin{cases}
-\tfrac{k-w}{2}=-\tfrac{-w-(2-k)}{2}-1\\
-\tfrac{-k-w}{2}+1=-\tfrac{-w+(2-k)}{2}
\end{cases}
\]
and the definitions of the normalisation factors in the different weights, we deduce the final result.
\end{proof}
\end{prop}

\begin{rmk}\label{rmkallp}
In particular, letting $T_p=\prod_{\p\mid p}T_{\p}$, we can describe its transpose using the Atkin--Lehner operator and the diamond operator $\langle p\rangle$ (the latter multiplying the level structure by $p$). We have $T_p^t=\prod_{\p\mid p}T_{\p}^t=\langle p \rangle^{-1} T_p$ and therefore we obtain from the previous proposition that the dual of $T_{p,(\underline{k},w)}$ is $\langle p \rangle^{-1} T_{p,(2-\underline{k},w)}$. In order to write this for the single operators $T_{\p}$ without the assumption that $\p$ is principal, one would have to more carefully define the diamond operator for $\p$.
\end{rmk}

\subsection{Partial Hasse invariants and the Goren--Oort stratification}\label{partialhassesec}
We recall the definition of partial Hasse invariants of \cite{gorenhasse, gorenoort, AG}. We follow the exposition of \cite[$\S$ 3.1]{redu}.

Let $\Ff$ be the residue field considered above and let subscript $\Ff$ denote the base change to $\operatorname{Spec}\Ff$. Consider the Verschiebung isogeny
\[
V: \mathcal{A}_{\Ff}^{(p)}\to \mathcal{A}_{\Ff}.
\]
It induces a morphism of $\Oo_F\otimes \Oo_{\mathcal{M}_{\Ff}}$-modules $\myom_{\Ff}\to \myom_{\Ff}^{(p)}$. For every $\tau\in \Sigma_{\infty}$ this gives a map $\omega_{\Ff,\tau}\to \omega_{\Ff, \tau}^{\otimes p}$. Note that in general the identification $\omega_{\mathcal{A}^{(p)}_{\Ff}}\simeq (\omega_{\mathcal{A}_{\Ff}})^{\otimes p}$ is not $\Oo_F$-linear, but induces $\omega_{\mathcal{A}^{(p)}_{\Ff},\tau}\simeq (\omega_{\mathcal{A}_{\Ff},\sigma^{-1}\circ \tau})^{\otimes p}$, where $\sigma$ is the Frobenius automorphism of $\Ff$. However, as recalled above, in this particular setting where $p$ splits completely, we have $\sigma^{-1}\circ \tau = \tau$. Therefore we get a section in $H^0(\mathcal{M}_{\Ff},\omega_{\Ff,\tau}^{\otimes (p-1)})$ which is invariant under the action of $\Delta(K)$ and hence descends to a section
\begin{displaymath}
h_{\tau}\in H^0(X_{\Ff},\omega_{\Ff,\tau}^{\otimes (p-1)}),
\end{displaymath}
which is called \emph{partial Hasse invariant at $\tau$} (or at $\p$ when identifying $\Sigma_{\infty}$ with $\Hom_{\Z}(\Oo_F, \Ff)$). The product of all the partial Hasse invariants (which is induced by the differential of the Vershiebung) is the usual \emph{total} Hasse invariant $h\in H^0(X_{\Ff},\myom_{\Ff}^{(p-1, \dots, p-1)})$.

\begin{rmk}\label{rmkhassehecke}
Note that, when looking at the special fibre points of the isogeny defining the cohomological correspondence $\tilde{T}_{\p}$ (as in the proof of Proposition \ref{propTpnorm}), in the étale locus the $\p$-component of the pullback of the isogeny $p_1^{\star}\mathcal{A}\to p_2^{\star}\mathcal{A}$ identifies with the partial Hasse invariant $h_{\p}$.
\end{rmk}

Finally let us recall the following results about the vanishing loci of the partial Hasse invariants.

\begin{prop}[\cite{gorenoort, AG}]
Let $D_{\tau}=Va(h_\tau)$ be the vanishing locus of $h_\tau$. $Va(h)=\cup_{\tau\in \Sigma_{\infty}}D_{\tau}$ is a proper, reduced divisor on $X_{\Ff}$ with simple normal crossing. For any $S\subset \Sigma_{\infty}$, $\cap_{\tau\in S} D_{\tau}$ is a regular subvariety of codimension $\# S$. Moreover, $Va(h)$ does not intersect the toroidal boundary $D$.
\end{prop}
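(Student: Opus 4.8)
The plan is to reduce everything to local, essentially ``PEL-type'' statements on the moduli spaces $\mathcal{M}^{\cc}_{K,\Ff}$ and then descend along the finite map $\mathcal{M}^{\cc}_{K}\to \MM^{\cc}_K$. First I would recall that on a single component $\mathcal{M}^{\cc}_{K,\Ff}$ the Verschiebung induces, for each $\tau$, a section $h_\tau\in H^0(\mathcal{M}^{\cc}_{K,\Ff},\omega_\tau^{\otimes(p-1)})$ whose divisor of zeros $D_\tau$ is exactly the locus where the $\tau$-part of the partial $a$-number jumps. Because $p$ is totally split and $\mathcal{M}^{\cc}_{K}$ is smooth over $\Z_{(p)}$ with the Kodaira--Spencer identification $\omega^{(\underline 2,0)}(-D)\simeq\Omega^n$, one can check on complete local rings at closed points that the Dieudonné module of $\mathcal{A}[p^\infty]$ decomposes $\Oo_F\otimes\Z_p$-equivariantly into $n$ height-$2$ pieces, and in each piece the deformation theory is that of an ordinary/supersingular elliptic $p$-divisible group; this is where one imports \cite{gorenoort,AG}. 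The vanishing of $h_\tau$ cuts out, in these local coordinates, a smooth hypersurface (one coordinate function vanishing), giving that $D_\tau$ is reduced, regular, and that the $D_\tau$ meet transversally, i.e. $\cap_{\tau\in S}D_\tau$ is regular of codimension $\#S$; normality/flatness of $\mathcal{M}^{\cc,\tor}_K$ over $\Z_{(p)}$ plus the product structure then upgrade this to simple normal crossings on the full toroidal compactification.

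Next I would handle the descent to $X_{\Ff}$. The section $h_\tau$ is $\Delta(K)$-invariant (this was already observed when defining $h_\tau$, since the action of a totally positive unit $\epsilon$ on $\omega_\tau$ is by $\tau(\epsilon)^{-1/2}$ but the Verschiebung-induced map is canonical modulo $p$ and compatible with the $\epsilon$-isomorphism $\mathcal{A}\simeq\mathcal{A}$), so $D_\tau$ is a $\Delta(K)$-stable divisor on $\mathcal{M}_{K,\Ff}$ and descends to a divisor $D_\tau$ on $X_{\Ff}$. Since $\Delta(K)$ acts freely on the boundary and (for $K$ sufficiently small) $\mathcal{M}^{\cc,\tor}_K\to\MM^{\cc,\tor}_K$ is étale-locally an isomorphism onto its image, all the geometric properties — reducedness, regularity of intersections, transversality — are preserved under descent. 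The statement about codimension of $\cap_{\tau\in S}D_\tau$ follows because regularity and codimension are local for the étale topology.

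Finally, the assertion that $Va(h)=\cup_\tau D_\tau$ is disjoint from the toroidal boundary $D$. For this I would use the explicit description of $\mathcal{A}^{\cc,\tor}$ near $D$: along the boundary the semi-abelian scheme is an extension of an abelian scheme of lower dimension (in fact, in the Hilbert case, a torus by the Tate parameter), and in any case its $p$-divisible group at the boundary is, fiberwise, multiplicative times étale in each $\tau$-component — hence ordinary in every embedding. Concretely, the Mumford/Tate construction shows $\omega_\tau$ extends across $D$ with the Verschiebung map remaining surjective, so $h_\tau$ is a unit along $D$; equivalently, the $q$-expansion of $h_\tau$ at every cusp has constant term $1$ (up to a unit). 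Therefore none of the $D_\tau$ meets $D$.

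The main obstacle is the first step: establishing regularity and transversality of the $\bigcap_{\tau\in S}D_\tau$ rigorously. This requires a careful deformation-theoretic computation with the $\Oo_F\otimes\Z_p$-equivariant crystalline Dieudonné module, identifying the partial Hasse invariant $h_\tau$ with the coordinate that governs the $\tau$-component of the Hodge filtration modulo $p$, and checking that its differential is nonzero on the relevant strata. Fortunately this is exactly the content of \cite{gorenoort} (and \cite{AG} for the refined stratification statements), so in practice I would cite those results once the $\Delta(K)$-descent and boundary arguments above have been put in place, rather than reprove them.
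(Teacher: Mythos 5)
Your proposal is essentially the paper's own treatment: the paper does not prove this proposition but quotes it directly from \cite{gorenoort, AG}, and your sketch likewise defers the substantive content (regularity, transversality of the strata, and triviality of $h_\tau$ along the boundary, i.e.\ constant $q$-expansion) to those references, while the $\Delta(K)$-invariance and descent to $X_{\Ff}$ that you spell out is exactly what the paper records when the sections $h_\tau$ are defined in $\S$\ref{partialhassesec}. So the approach matches; no gap beyond what is already delegated to the cited works.
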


The analogous results holds for the vanishing loci over the moduli space $\mathcal{M}_{\Ff}$. Moreover, $\mathcal{M}_{\Ff} - Va(h)$ is the open subscheme of $\mathcal{M}_{\Ff}$ where the universal abelian variety $\mathcal{A}_{\Ff}$ is ordinary and $D_{\tau}$ is the closed subscheme where $\mathcal{A}_{\Ff}$ is supersingular at $\p$ (the prime above $p$ corresponding to the embedding $\tau$), or, in the language of \cite{gorenhasse}, the type of $\mathcal{A}_{\Ff}$ contains the vector $(x_{\p'})_{\p'\mid p}$, where $x_{\p'}=\emptyset$ if $\p'\neq \p$ and $x_{\p'}=\{1\}$ if $\p'=\p$. This amounts to saying that it is the locus where the rank 2 group scheme given by the $\p$ torsion is of multiplicative rank equal to zero. In particular, the multiplicative rank of $\mathcal{A}_{\Ff}$ is $\leq (n-1)$ over $D_{\p}$ for every $\p$ and is maximal, i.e. equal to $n$ over the ordinary locus. We will write
\begin{displaymath}
\mathcal{M}_{\Ff}^{ord}=\mathcal{M}_{\Ff}-Va(h); \ \ \ X_{\Ff}^{\ord}=X_{\Ff} -Va(h).
\end{displaymath} 
\subsubsection{Lifts of the Hasse invariants}\label{liftshasse} We now recall that suitable powers of the partial Hasse invariants lift to the reduction of $X$ modulo $\wp^n$. More precisely, we denote by $X_n$ the base change $X\times_{\operatorname{Spec}R}\operatorname{Spec}(R/\wp^n)$. In particular for $n=1$ we have $X_{\Ff}=X_1$. 

Let $\mathcal{U}\subset X_n$ be an open affine subscheme. As explained in \cite[$\S$ 3.3.1]{redu}, the restriction of the partial Hasse invariant $h_{\tau}$ to $\mathcal{U}\times_{\operatorname{Spec}(R/\wp^n)}\operatorname{Spec}\Ff$ can be lifted to an element $\tilde{h}_{\tau,\mathcal{U}}$ in $H^0(\mathcal{U},\omega_{\tau}^{\otimes (p-1)})$, where by abuse of notation we are still denoting by $\omega_{\tau}$ its base change over $R/\wp^n$. We find that $(\tilde{h}_{\tau,\mathcal{U}})^{p^{n-1}}$ is independent on the choice of the lift $\tilde{h}_{\tau,\mathcal{U}}$. One then deduces that the sections $\{ (\tilde{h}_{\tau,\mathcal{U}})^{p^{n-1}}\}_{\mathcal{U}}$, for $\mathcal{U}$ varying over an open affine covering of $X_n$, glue into a global section
\begin{displaymath}
\tilde{h}_{\tau,n} \in H^0(X_n, \omega_{\tau}^{\otimes p^{n-1}(p-1)})
\end{displaymath}
which is a lift of the $p^{n-1}$-th power of $h_{\tau}\in H^0(X_{1},\omega_{\Ff,\tau}^{\otimes (p-1)})$ (unique up to units). We let
\begin{displaymath}
D_{\tau,n}:=Va(\tilde{h}_{\tau,n})
\end{displaymath}
be the divisor on $X_n$ given by the vanishing locus of $\tilde{h}_{\tau,n}$. Under the natural map $X_1\to X_n$, the divisor $p^{n-1}\cdot D_\tau$ on $X_1$ is mapped to $D_{\tau,n}$. Moreover, since $(\tilde{h}_{\tau,n})^p$ is the unique lift of $(h_{\tau})^{p^n}$ over $X_n$, we have that for every $n$, $\tilde{h}_{\tau,n+1} \in H^0(X_{n+1}, \omega_{\tau}^{\otimes p^{n}(p-1)})$ is a lift of $(\tilde{h}_{\tau,n})^p$. In particular, the divisor $p\cdot D_{\tau,n}$ is mapped to $D_{\tau,n+1}$ under the map $X_{n}\to X_{n+1}$. We let as above $X_n^{\ord}= X_n-Va(\prod_{\tau}\tilde{h}_{\tau,n})$.

\section{Higher Hida theory}\label{sechida}
In this section, we finally move to the construction of the higher Hida theory $\Lambda$-modules. Under the identification (\ref{eq:sigma}), we will denote by $k_{\p}$ the $\p$-component of a vector $\underline{k}\in \Z^{\Sigma_{\infty}}$.
\subsection{Mod $p$ theory}\label{modpsection} Consider $X_1$ the special fibre of the compact Hilbert modular variety $X$. In this section we want to prove a mod $\wp$ control theorem (Theorem \ref{modpclassical}), which will be used crucially to prove the classicality results of the next section. 

Let us denote by $X_0(\p)_1^{\et}$ and $X_0(\p)_1^{m}$ the \'{e}tale and multiplicative locus of $X_0(\p)_1$ and with $i^{\et},i^m$ the inclusions into $X_0(\p)_1$. We denote by $p_i^{\et},p_i^{m}$ the restriction of the projections $p_i$ to $X_0(\p)_1^{\et}$ and $X_0(\p)_1^{m}$ respectively.

\begin{lemma} If $k_\p>1$ we have the following factorisation
\[
\begin{tikzcd}
p_2^{\star}{\omega}^{(\underline{k},w)} \arrow{r}{T_\p} \arrow{d} &p_1^{!}{\omega}^{(\underline{k},w)}\\
i^{\et}_{\star}(p_2^{\et})^{\star} {\omega}^{(\underline{k},w)}\arrow{r} & i^{\et}_{\star}(p_1^{\et})^{!}{\omega}^{(\underline{k},w)}.\arrow{u}
\end{tikzcd}
\]
If $k_\p<1$ we have the following factorisation
\[
\begin{tikzcd}
p_2^{\star}{\omega}^{(\underline{k},w)} \arrow{r}{T_\p} \arrow{d} &p_1^{!}{\omega}^{(\underline{k},w)}\\
i^{m}_{\star}(p_2^{m})^{\star} {\omega}^{(\underline{k},w)}\arrow{r} & i^{m}_{\star}(p_1^{m})^{!}{\omega}^{(\underline{k},w)}.\arrow{u}
\end{tikzcd}
\]
\begin{proof}
The result follows from the study of the correspondence $T_\p$ on the special fibre carried out in the proof of Proposition \ref{propTpnorm}.
More precisely, since $i^{\et}$ and $i^{m}$ are closed immersions, there are natural isomorphisms of functors $ i^{\et}_{\star}(p_1^{\et})^{!} = \underline{\Gamma}_{X_0(\p)_1^{\et}}p_1^!$ and $ i^{m}_{\star}(p_1^{m})^{!} = \underline{\Gamma}_{X_0(\p)_1^{m}}p_1^!$, where $\underline{\Gamma}_{Y}$ denotes the subsheaf of sections with support on a closed susbscheme $Y$ (see \cite[Proposition 2.3]{boxer2020higher}). The proposition then follows from the fact that the image of $T_\p$ vanishes at maximal points of $X_0(\p)_1^{m}$ if $k_\p>1$ and of $X_0(\p)_1^{\et}$ if $k_\p<1$ as shown in Proposition \ref{propTpnorm}.
\end{proof}
\end{lemma} 

\begin{prop}\label{prop:shiftTpmodp}
If $k_\p>1$, $T_\p$ induces a map
\begin{displaymath}
p_2^{\star}{\omega}^{(\underline{k},w)}((np+k_\p -2)D_\p) \to p_1^{!}{\omega}^{(\underline{k},w)}(nD_\p).
\end{displaymath}
If $k_\p<1$, $T_\p$ induces a map
\begin{displaymath}
p_2^{\star}{\omega}^{(\underline{k},w)}(-nD_\p) \to p_1^{!}{\omega}^{(\underline{k},w)}((-np+k_\p)D_\p).
\end{displaymath}
\begin{proof}
Assume that $k_\p>1$. The correspondence $T_{\p}$ is supported on $X_0(\p)_1^{\et}$ and, restricting to the intersection of this locus with the open Shimura variety, we know that $p_2^{\et}$ is an isomorphism and $p_1^{\et}$ is totally ramified of degree $p$. The divisor $D_\p$ does not intersect the toroidal boundary and we therefore obtain that $(p_1^{\et})^{\star}(D_\p)=p(p_2^{\et})^{\star}(D_\p)$. By a slight abuse of notation we still denote by $D_\p$ the divisor $(p_2^{\et})^{\star}(D_\p)$. Twisting by $\Oo_{X_0(\p)_1^{\et}}(npD_\p)$, if $k_\p=2$ the cohomological correspondence $p_2^{\star}{\omega}^{(\underline{k},w)}\to p_1^{!}{\omega}^{(\underline{k},w)}$ induces a morphism $p_2^{\star}{\omega}^{(\underline{k},w)}((np)D_\p) \to p_1^{!}{\omega}^{(\underline{k},w)}(nD_\p)$. If $k_\p\gneq 2$ and $w$ is even, the cohomological correspondence can be written as the tensor product of
\begin{displaymath}
(p_2^{\et})^{\star}{\omega}^{(\underline{k}',w)} \to (p_1^{\et})^{!}{\omega}^{(\underline{k}',w)} \ \ \ \ \text{ and } \ \ \  (p_2^{\et})^{\star}\left(\omega_\p^{k_\p-2}\otimes (\wedge^2\hh^1_\p)^{\tfrac{2-k_\p}{2}}\right) \to (p_1^{\et})^{\star}\left(\omega_\p^{k_\p-2}\otimes (\wedge^2\hh^1_\p)^{\tfrac{2-k_\p}{2}}\right),
\end{displaymath}
where $\underline{k}'_\p=2$, $\underline{k}'_\q=\underline{k}_\q$ if $\q\neq \p$. The $\p$-component of the differential of the isogeny $(p_1^{\et})^{\star}\mathcal{A}\to (p_2^{\et})^{\star}\mathcal{A}$ identifies with the partial Hasse invariant $h_\p$ and induces a map $(p_2^{\et})^{\star}\omega_\p(D_\p)\to (p_1^{\et})^{\star}\omega_\p$. Combining this with the result for $k_\p=2$, we obtain a map $p_2^{\star}{\omega}^{(\underline{k},w)}((np+k_\p -2)D_\p) \to p_1^{!}{\omega}^{(\underline{k},w)}(nD_\p).$ It remains to discuss the case of $w$ odd, which can be treated similarly, writing the correspondence as tensor product of 
\begin{displaymath}
(p_2^{\et})^{\star}{\omega}^{(\underline{k}'',w-1)} \to (p_1^{\et})^{!}{\omega}^{(\underline{k}'',w-1)} \  \text{ and } \  (p_2^{\et})^{\star}(\omega_\p^{k_\p}\otimes (\wedge^2\hh^1_\p)^{3-k_\p /2}\otimes \bigotimes_{\q\neq \p} \omega_{\q}) \to (p_1^{\et})^{\star}(\omega_\p^{k_\p}\otimes (\wedge^2\hh^1_\p)^{3-k_\p /2}\otimes \bigotimes_{\q\neq \p} \omega_{\q}),
\end{displaymath}
where $\underline{k}''_\p=2$, $\underline{k}''_\q=\underline{k}_\q-1 $ if $\q\neq \p$.

Now assume $k_\p<1$. In this case the correspondence is supported on $X_0(\p)_1^{m}$ and, since the role of $p_1^{m}$ and $p_2^{m}$ is interchanged, we find $(p_2^{m})^{\star}(D_\p)=p(p_1^{m})^{\star}(D_\p)$. Denoting by $D_\p$ the divisor $(p_1^{m})^{\star}(D_\p)$, we obtain that $T_\p$ induces a map $(p_2^m)^{\star}(\Oo_{X_1})_{\p}(-nD_\p) \to (p_1^m)^{!}(\Oo_{X_1})_\p(-npD_\p)$, which yields the case $k_\p =0$. For the case $k_\p\lneq 0$, we proceed as above and decompose the correspondence, reducing to study the map 
\begin{displaymath}
(p_2^{m})^{\star}{\omega}^{(\underline{k},w)} \to (p_1^{m})^{\star}{\omega}^{(\underline{k},w)}
\end{displaymath}
in the case where $k_\p \leq -1$. The $\p$-component of the differential of the isogeny dual to $\pi:(p_1^{m})^{\star}\mathcal{A}\to (p_2^{m})^{\star}\mathcal{A}$ can be identified with the partial Hasse invariant $h_\p$. More precisely recall that the composition
\begin{displaymath}
(p_2^{m})^{\star}\omega_{\p}^{-1}\xrightarrow{\pi^{\star}}(p_1^{m})^{\star}\omega_\p^{-1}\xrightarrow{(\pi^D)^{\star}} (p_2^{m})^{\star}\omega_\p^{-1}
\end{displaymath}
is given by multiplication by $p^{-1}$ (and multiplication by $p^{k_\p}$ when taking the $k_{\p}$-th power). Hence the correspondence $(p_2^{m})^{\star}\omega_{\p}^{k_\p}\to(p_1^{m})^{\star}\omega_\p^{k_\p}$ (which by our normalisation carries a multiplication by $p^{-k_\p}$), is given by $(h_\p)^{\otimes k_\p}$ and therefore induces a map $(p_2^{m})^{\star}\omega_{\p}^{k_\p}\to(p_1^{m})^{\star}\omega_\p^{k_\p}(-k_\p D_\p)$.
\end{proof}
\end{prop}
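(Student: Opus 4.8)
The plan is to reduce everything to the special‑fibre geometry of the Iwahori correspondence already described for Proposition~\ref{propTpnorm}, together with the vanishing of the partial Hasse invariant along $D_\p$; I would treat the case $k_\p>1$ first and then $k_\p<1$ by the mirror‑image argument with the roles of $p_1,p_2$ (equivalently of the étale and multiplicative loci) interchanged. First I would use the preceding Lemma to replace $T_\p$ by its factorisation through the étale locus $X_0(\p)_1^{\et}$, on which, by Proposition~\ref{propTpnorm}, $p_2^{\et}$ is an isomorphism and $p_1^{\et}$ is totally ramified of degree $p$ along the preimage of $D_\p$. Since $D_\p$ is reduced and, by \cite{gorenoort, AG}, disjoint from the toroidal boundary, this gives the key identity $(p_1^{\et})^\star D_\p=p\,(p_2^{\et})^\star D_\p$; abbreviating $(p_2^{\et})^\star D_\p$ by $D_\p$, so that $p_1^\star D_\p=pD_\p$, there is nothing further to check near the boundary.

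The base weight $k_\p=2$ is then essentially formal: $T_\p\colon p_2^\star{\omega}^{(\underline{k},w)}\to p_1^!{\omega}^{(\underline{k},w)}$ is already an honest map of sheaves by Proposition~\ref{propTpnorm}, and twisting it by $\Oo(npD_\p)$ and using $p_1^\star D_\p=pD_\p$ to rewrite the target twist as $p_1^!\bigl({\omega}^{(\underline{k},w)}(nD_\p)\bigr)$ produces $p_2^\star{\omega}^{(\underline{k},w)}(npD_\p)\to p_1^!{\omega}^{(\underline{k},w)}(nD_\p)$. For $k_\p>2$ (and $w$ even) I would write ${\omega}^{(\underline{k},w)}\simeq{\omega}^{(\underline{k}',w)}\otimes\bigl({\omega}_\p^{k_\p-2}\otimes(\wedge^2\hh^1_\p)^{\frac{2-k_\p}{2}}\bigr)$ with $k'_\p=2$, $k'_\q=k_\q$ for $\q\neq\p$, and split $T_\p$ accordingly; the first factor is the base case, and on the second factor the only nontrivial ingredient is the $\p$-component of the differential of $\pi$, which by Remark~\ref{rmkhassehecke} is the partial Hasse invariant $h_\p$. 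As $h_\p$ vanishes to order one along $D_\p$, this realises an isomorphism $(p_2^{\et})^\star{\omega}_\p(D_\p)\xrightarrow{\sim}(p_1^{\et})^\star{\omega}_\p$; its $(k_\p-2)$-nd power supplies an extra twist by $(k_\p-2)D_\p$ on the source, and tensoring with the base case yields $p_2^\star{\omega}^{(\underline{k},w)}((np+k_\p-2)D_\p)\to p_1^!{\omega}^{(\underline{k},w)}(nD_\p)$. For $w$ odd one runs the same decomposition after moving one unit of weight at each $\q\neq\p$ into the base factor (so $k''_\p=2$, $k''_\q=k_\q-1$, base weight $w-1$), which keeps everything optimally integral and does not affect the $\p$-analysis.

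For $k_\p<1$ the preceding Lemma puts us on the multiplicative locus $X_0(\p)_1^m$, where now $p_1^m$ is an isomorphism and $p_2^m$ is totally ramified of degree $p$, so the divisor identity becomes $(p_2^m)^\star D_\p=p\,(p_1^m)^\star D_\p$. The base case $k_\p=0$ is again the formal twisting of $T_\p$, now by $\Oo(-nD_\p)$, giving $p_2^\star{\omega}^{(\underline 0,w)}(-nD_\p)\to p_1^!{\omega}^{(\underline 0,w)}(-npD_\p)$; for $k_\p<0$ I would split off the ${\omega}_\p^{k_\p}$-part and use that, after our normalisation, this part carries a factor $p^{-k_\p}$, while the composite $(p_2^m)^\star{\omega}_\p^{-1}\xrightarrow{\pi^\star}(p_1^m)^\star{\omega}_\p^{-1}\xrightarrow{(\pi^D)^\star}(p_2^m)^\star{\omega}_\p^{-1}$ equals multiplication by $p^{-1}$ by \eqref{eqdual}, the $\p$-component of the differential of $\pi^D$ being again $h_\p$. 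Hence this part of the correspondence is governed by the appropriate power of $h_\p$, and combining it with the base case (the signs being precisely accounted for by the normalisation factor) gives $p_2^\star{\omega}^{(\underline{k},w)}(-nD_\p)\to p_1^!{\omega}^{(\underline{k},w)}((-np+k_\p)D_\p)$.

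I do not expect a genuine obstacle here: the geometric inputs — ramification of $p_i$ along $D_\p$, identification of the relevant component of the differential with $h_\p$, disjointness of $D_\p$ from the boundary — are all in place, so the argument is mostly careful bookkeeping. The point needing the most care is keeping straight which copy of $D_\p$ (on $X_1$, or pulled back via $p_1$, or via $p_2$) enters each twist, and checking that the decomposition of ${\omega}^{(\underline{k},w)}$ into a ``weight-$2$-at-$\p$'' factor and a Hasse-invariant factor stays inside the optimally integral sheaves for both parities of $w$.
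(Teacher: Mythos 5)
Your proposal is correct and follows essentially the same route as the paper's proof: factorisation of $T_\p$ through the \'{e}tale (resp.\ multiplicative) locus, the divisor identity $(p_1^{\et})^{\star}D_\p=p\,(p_2^{\et})^{\star}D_\p$ (resp.\ its mirror), the base case $k_\p=2$ (resp.\ $k_\p=0$) by formal twisting, and the splitting of ${\omega}^{(\underline{k},w)}$ into a weight-$2$-at-$\p$ factor and a factor governed by the appropriate power of $h_\p$, with the same parity adjustment for $w$ odd and the same use of $\pi^{\star}\circ(\pi^{D})^{\star}=p^{-1}$ together with the normalisation factor $p^{-k_\p}$ in the case $k_\p<1$. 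No gaps to report.
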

From the previous proposition we deduce the following two corollaries.
\begin{cor}\label{corjumps}
For all $k_\p>1$ and $n\geq 0$, $T_\p$ acts on $\RG(X_1,{\omega}^{(\underline{k},w)}(nD_\p))$ and for $n'\geq n$ the natural maps $\RG(X_1,{\omega}^{(\underline{k},w)}(nD_\p))\to \RG(X_1,{\omega}^{(\underline{k},w)}(n'D_\p))$ are equivariant for this action. Moreover, we have a commutative diagram
\[
\begin{tikzcd}
\RG(X_1,{\omega}^{(\underline{k},w)}(nD_\p))\arrow{r}\arrow{d}{T_\p} &\RG(X_1,{\omega}^{(\underline{k},w)}((np+k_\p-2)D_\p))\arrow{d}{T_\p}\arrow[dashed]{dl}{T_\p}\\
\RG(X_1,{\omega}^{(\underline{k},w)}(nD_\p))\arrow{r}{} &\RG(X_1,{\omega}^{(\underline{k},w)}((np+k_\p-2)D_\p)).
\end{tikzcd}
\]
For all $k_\p<1$ and $n\geq 0$, $T_\p$ acts on $\RG(X_1,{\omega}^{(\underline{k},w)}(-nD_\p))$ and for $n'\geq n$ the maps $\RG(X_1,{\omega}^{(\underline{k},w)}(-n'D_\p))\to \RG(X_1,{\omega}^{(\underline{k},w)}(-nD_\p))$ are equivariant for this action. Moreover, we have a commutative diagram
\[
\begin{tikzcd}
\RG(X_1,{\omega}^{(\underline{k},w)}((-np+k_\p)D_\p))\arrow{r}\arrow{d}{T_\p} &\RG(X_1,{\omega}^{(\underline{k},w)}(-nD_\p))\arrow{d}{T_\p}\arrow[dashed]{dl}{T_\p}\\
\RG(X_1,{\omega}^{(\underline{k},w)}((-np+k_\p)D_\p))\arrow{r}{} &\RG(X_1,{\omega}^{(\underline{k},w)}(-nD_\p)).
\end{tikzcd}
\]
\end{cor}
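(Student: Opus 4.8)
The plan is to deduce everything formally from the previous Proposition, which is where all the geometric content lies; I will describe the case $k_\p>1$, the case $k_\p<1$ being identical after exchanging the roles of $p_1$ and $p_2$ and reversing the signs of the divisor twists, exactly as in the proof of that Proposition. Set $m:=np+k_\p-2$. Since $p\geq 5$, $k_\p\geq 2$ and $n\geq 0$ we have $n\leq m$, so there is a natural inclusion of sheaves $j\colon\omega^{(\underline{k},w)}(nD_\p)\hookrightarrow\omega^{(\underline{k},w)}(mD_\p)$ on $X_1$. First I would fix, for each $n\geq 0$, the cohomological correspondence $t_n\colon p_2^\star\omega^{(\underline{k},w)}(mD_\p)\to p_1^!\omega^{(\underline{k},w)}(nD_\p)$ provided by the Proposition; precomposing $t_n$ with $p_2^\star j$ yields a cohomological correspondence $c_n\colon p_2^\star\omega^{(\underline{k},w)}(nD_\p)\to p_1^!\omega^{(\underline{k},w)}(nD_\p)$, and I would \emph{define} the action of $T_\p$ on $\RG(X_1,\omega^{(\underline{k},w)}(nD_\p))$ by the usual recipe: pull back along $p_2$, apply $c_n$, push forward by $\tr_{p_1}$. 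The dashed arrow would be defined, in turn, as the map on cohomology induced by $t_n$ itself, before precomposing with $p_2^\star j$, that is $\RG(X_1,\omega^{(\underline{k},w)}(mD_\p))\to\RG(X_1,\omega^{(\underline{k},w)}(nD_\p))$.

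Next I would establish the one compatibility on which everything rests: that the whole family $\{t_n\}_n$, together with the various evident inclusions (all denoted $j$ by abuse), forms a compatible system. The key point, visible from the proof of the Proposition, is that on the locus $X_0(\p)_1^{\et}$ on which $T_\p$ is supported, every $t_n$ is obtained by tensoring one fixed cohomological correspondence — the one furnished by the preceding factorisation Lemma, $(p_2^{\et})^\star\omega^{(\underline{k},w)}\to(p_1^{\et})^!\omega^{(\underline{k},w)}$ — with morphisms of invertible sheaves that are just multiplication by powers of the canonical section of $\Oo(D_\p)$ and by $p$, using $(p_1^{\et})^\star D_\p=p\,(p_2^{\et})^\star D_\p$ and the orders of vanishing of $\tr_{p_1}$ recorded in the proof of Proposition~\ref{propTpnorm}. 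As such multiplications commute with one another, I would conclude that $p_1^!(j)\circ c_n=c_{n'}\circ p_2^\star(j)$ for all $n'\geq n$ and, similarly, $p_1^!(j)\circ t_n=c_m$ as correspondences $p_2^\star\omega^{(\underline{k},w)}(mD_\p)\to p_1^!\omega^{(\underline{k},w)}(mD_\p)$.

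I would then transport these identities to cohomology using the naturality of $p_2^\star$ and of the trace $\tr_{p_1}$ (the counit of the adjunction $Rp_{1*}\dashv p_1^!$). The first identity gives that the transition map $\RG(X_1,\omega^{(\underline{k},w)}(nD_\p))\to\RG(X_1,\omega^{(\underline{k},w)}(n'D_\p))$ is $T_\p$-equivariant; the upper triangle of the diagram commutes because $c_n=t_n\circ p_2^\star j$ and $p_2^\star\circ j_*=(p_2^\star j)_*\circ p_2^\star$; and the lower triangle commutes by the identity $p_1^!(j)\circ t_n=c_m$ together with $j_*\circ\tr_{p_1}=\tr_{p_1}\circ(p_1^!j)_*$. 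For $k_\p<1$ I would run the same argument with $p_1$ and $p_2$ interchanged, now using the map $p_2^\star\omega^{(\underline{k},w)}(-nD_\p)\to p_1^!\omega^{(\underline{k},w)}((-np+k_\p)D_\p)$ of the Proposition and the inclusion $\omega^{(\underline{k},w)}((-np+k_\p)D_\p)\hookrightarrow\omega^{(\underline{k},w)}(-nD_\p)$, which is valid since $-np+k_\p\leq -n$.

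I do not expect a real obstacle here: the substantive input — that $T_\p$ improves the pole order along $D_\p$ on the special fibre — is already the content of the Proposition, and the corollary is formal. The only point that needs genuine care is the bookkeeping of the divisor twists and of the vanishing orders of $\tr_{p_1}$, so that $\{t_n\}_n$ really is a compatible system, equivalently that the identity $p_1^!(j)\circ t_n=c_m$ holds on the nose; this is exactly what the proof of the Proposition provides, so I would simply refer to it. I would also remark that this compatibility is precisely what is needed later in order to form the colimit $\colim_n\RG(X_1,\omega^{(\underline{k},w)}(nD_\p))$ and equip it with a well-behaved $T_\p$-action.
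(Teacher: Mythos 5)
Your proposal is correct and takes essentially the same route as the paper, which deduces the corollary directly from the preceding proposition: you define the $T_\p$-action and the dashed arrow via the twisted correspondences of that proposition composed with the natural inclusions of divisor twists, and the equivariance and triangle identities reduce to sheaf-level compatibilities that hold because every $t_n$ is the fixed correspondence supported on the \'{e}tale (resp.\ multiplicative) locus tensored with powers of the canonical section of $\Oo(D_\p)$. (One cosmetic caveat: on $X_1$ these twisting maps are just multiplication by powers of that section — the factor $p$ you invoke is irrelevant in characteristic $p$, the relevant integrality having already been used in the normalisation of $T_\p$ — but this does not affect the argument.)
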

{Since the complexes considered above are bounded complexes of finite $R$-modules, their endomorphism ring is finite. Therefore for $n$ large enough, we have $T_{\p}^{n!}=T_{\p}^{(n+1)!}$ and $T^{n!}_{\p}$ is idempotent. Hence we can define the limit $e(T_\p)=\lim_{n\to \infty}T_\p^{n!}$ which is an idempotent endomorphism of such complexes.}
The above corollary hence implies the following crucial result.
\begin{cor}\label{corprojector}
After applying the idempotent $e(T_\p)$ the horizontal maps considered above become quasi-isomorphisms, namely if $n\geq 0$ then we have quasi-isomorphisms
\begin{displaymath}
e(T_\p)\RG(X_1,{\omega}^{(\underline{k},w)}(nD_\p)) \xrightarrow{\simeq} e(T_\p)\RG(X_1,{\omega}^{(\underline{k},w)}((np+k_\p-2)D_\p)) \ \ \ \ \text{if }k_\p>1;
\end{displaymath} 
\begin{displaymath}
e(T_\p)\RG(X_1,{\omega}^{(\underline{k},w)}((-np+k_\p)D_\p)) \xrightarrow{\simeq} e(T_\p)\RG(X_1,{\omega}^{(\underline{k},w)}(-nD_\p)) \ \ \ \ \text{if }k_\p<1.
\end{displaymath}
\end{cor}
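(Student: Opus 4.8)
\textbf{Plan for Corollary \ref{corprojector}.} The statement is an immediate formal consequence of Corollary \ref{corjumps}, so the plan is simply to extract the idempotent argument from the commuting triangles. I would treat the case $k_\p > 1$ in detail; the case $k_\p < 1$ is identical after reversing the arrows. Write $f\colon \RG(X_1,\omega^{(\underline{k},w)}(nD_\p))\to \RG(X_1,\omega^{(\underline{k},w)}((np+k_\p-2)D_\p))$ for the natural (restriction) map, and write $g\colon \RG(X_1,\omega^{(\underline{k},w)}((np+k_\p-2)D_\p))\to \RG(X_1,\omega^{(\underline{k},w)}(nD_\p))$ for the dashed diagonal correspondence $T_\p$ supplied by the proposition, defined on the larger twist and landing in the smaller one. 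Corollary \ref{corjumps} says precisely that $g\circ f = T_\p$ on the source complex and $f\circ g = T_\p$ on the target complex (up to quasi-isomorphism, interpreting $T_\p$ as an endomorphism of the complex via \cite[Lemma 2.1.3, Lemma 2.3.1]{pilloni}).

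Now apply the ordinary projector. Since $e(T_\p)$ is the limit of $T_\p^{n!}$ and commutes with $f$ and $g$ (these being maps of complexes equivariant for $T_\p$ by the first assertion of Corollary \ref{corjumps}), on the $e(T_\p)$-parts the operator $T_\p$ becomes invertible: $e(T_\p)T_\p$ is a quasi-isomorphism with quasi-inverse $e(T_\p)T_\p^{-1}$, which again can be realised at the level of complexes by the cited lemmas of \cite{pilloni} (or one argues on cohomology groups, which are finite-dimensional $\Ff$-vector spaces, where $T_\p$ acts invertibly after applying $e(T_\p)$). From $g\circ f = T_\p$ and $f\circ g = T_\p$ we then get that $e(T_\p)f$ has a left inverse $e(T_\p)T_\p^{-1}g$ and a right inverse $e(T_\p)g\, T_\p^{-1}$; hence $e(T_\p)f$ is a quasi-isomorphism
\[
e(T_\p)\RG(X_1,\omega^{(\underline{k},w)}(nD_\p)) \xrightarrow{\ \simeq\ } e(T_\p)\RG(X_1,\omega^{(\underline{k},w)}((np+k_\p-2)D_\p)).
\]

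\textbf{Main obstacle.} There is essentially no geometric content left here — the only genuine point requiring care is the manipulation of the projector $e(T_\p)$ at the level of complexes rather than cohomology, i.e.\ making sense of ``$e(T_\p)\RG$'' and of the factorisations $gf = T_\p$, $fg = T_\p$ as honest statements in the derived category, which is exactly what \cite[Lemma 2.1.3, Lemma 2.3.1]{pilloni} are cited for. One must also check that the dashed map $T_\p$ in the two triangles of Corollary \ref{corjumps} is the \emph{same} operator appearing in the horizontal compositions, so that the two triangles really do exhibit $T_\p$ as $g\circ f$ and as $f\circ g$; this is built into the way the proposition produces the twisted correspondence, so it is a bookkeeping check rather than a difficulty. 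For $k_\p<1$ one uses the second triangle of Corollary \ref{corjumps} verbatim, with the roles of the restriction map and the diagonal $T_\p$ exchanged, to conclude that $e(T_\p)$ applied to the restriction $\RG(X_1,\omega^{(\underline{k},w)}((-np+k_\p)D_\p))\to\RG(X_1,\omega^{(\underline{k},w)}(-nD_\p))$ is a quasi-isomorphism.
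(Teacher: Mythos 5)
Your argument is correct and is exactly the route the paper intends: the paper treats the corollary as an immediate consequence of Corollary \ref{corjumps}, the two commuting triangles giving $g\circ f=T_\p$ and $f\circ g=T_\p$, and the standard fact that $T_\p$ becomes invertible on the $e(T_\p)$-part (with the passage from cohomology to complexes handled by the cited lemmas of \cite{pilloni}). The only cosmetic slip is the remark that for $k_\p<1$ the roles of the restriction map and the diagonal are ``exchanged''--the second triangle in Corollary \ref{corjumps} has the same formal shape as the first--but this does not affect the proof.
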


We are finally ready to define the objects for which we can prove classicality results. 

\begin{defi}
Consider $(\underline{k},w)$ a cohomological weight as above such that $k_\p\neq 1$ for every $\p\mid p$. Let $T_p=\prod_{\p\mid p}T_\p$, $J:=\{\p: k_\p<1\}$ and $i_J:=\# J$. We then let
\begin{displaymath}
M(\underline{k},w):=\left(\limm_{n_\p}\right)_{\p\in J} \left(\colim_{n_\p}\right)_{\p\not\in J} e(T_p)\RG(X_1, {\myom}^{(\underline{k},w)}(\sum_{\p\not\in J}n_{\p}D_\p -\sum_{\p\in J}n_{\p}D_\p));
\end{displaymath}
\begin{displaymath}
M(\underline{k},w)(-D):=\left(\limm_{n_\p}\right)_{\p\in J} \left(\colim_{n_\p}\right)_{\p\not\in J} e(T_p)\RG(X_1, {\myom}^{(\underline{k},w)}(-D+\sum_{\p\not\in J}n_{\p}D_\p -\sum_{\p\in J}n_{\p}D_\p)).
\end{displaymath}
\end{defi}

Notice that this is well defined thanks to Corollary \ref{corprojector}, which tells you that there is no ambiguity when ``deciding in which order taking the limits and colimits''. 

\begin{thm}\label{modpclassical}
If for all $k_\p$, we have $k_\p\leq -1$ when $\p\in J$ and $k_\p\geq 3$ when $\p\not\in J$, then there are quasi-isomorphisms
\begin{displaymath}
M(\underline{k},w) \simeq e(T_p)\RG(X_1,{\omega}^{(\underline{k},w)}) \ \ \ \text{and} \ \ \ M(\underline{k},w)(-D) \simeq e(T_p)\RG(X_1,{\omega}^{(\underline{k},w)}(-D)). 
\end{displaymath}
Moreover $M(\underline{k},w)(-D)$ is a perfect complex concentrated in degrees $[0,i_J]$ and $M(\underline{k},w)$ is a perfect complex concentrated in degrees $[i_J,n]$.
\begin{proof}
The classicality isomorphisms follow from Corollary \ref{corprojector}, which tells us that, under the above conditions on the weights, the transition maps in the limits are isomorphisms. Since the cohomology of the complexes $\RG(X_1,{\omega}^{(\underline{k},w)})$ and $\RG(X_1,{\omega}^{(\underline{k},w)}(-D))$ is finite and $e(T_p)$ is an idempotent, so is the cohomology of $e(T_p)\RG(X_1,{\omega}^{(\underline{k},w)})$ and of $e(T_p)\RG(X_1,{\omega}^{(\underline{k},w)}(-D))$, and, by the classicality isomorphisms, the one of $M(\underline{k},w)$ and $M(\underline{k},w)(-D)$.

We now prove the statement about the vanishing of the cohomology for $M(\underline{k},w)(-D)$. The vanishing for $M(\underline{k},w)$ will follow by this, combined with Proposition \ref{dualtp}, the classicality isomorphism, and duality. Let 
\begin{displaymath}
\RG(\underline{k},w)(-D):=\left(\colim_{n_\p}\right)_{\p\not\in J} e(T_{p})\RG(X_1, {\myom}^{(\underline{k},w)}(-D+\sum_{\p\not\in J}n_{\p}D_\p ).
\end{displaymath}
First we notice that $\RG(\underline{k},w)(-D)= e(T_{p}) \RG(X_1\setminus (\cup_{\p\not\in J}D_{\p}), {\myom}^{(\underline{k},w)}(-D)$. Moreover, by Corollary \ref{corprojector}, we have $\RG(\underline{k},w)(-D) \simeq e(T_{p})\RG(X_1, {\myom}^{(\underline{k},w)}(-D)) \simeq M(\underline{k},w)(-D)$. 
Therefore to prove the claim for the latter complex, we will prove that $\RG(\underline{k},w)(-D)$ is concentrated in degrees $[0, i_J]$.
We use the stratification
\begin{displaymath}
Z_0=X_1\setminus (\cup_{\p\not\in J}D_{\p})\supset Z_{1}=Z'_{n-1}\setminus (\cup_{\p\not\in J}D_{\p})\supset \dots \supset Z_n = Z'_0 \setminus (\cup_{\p\not\in J}D_{\p}) \supset Z_{n+1}=\emptyset,
\end{displaymath}
where $Z'_i$ is the union of the Ekedahl-Oort stratum of $X_i$ of dimension $i$, which in our case is given by the closed subspace of $X_1$ where the multiplicative rank of the universal $p$-divisible group is $\leq i$, which in other words is $\cup_{\p_1 \neq \p_2 \dots \neq \p_{n-i}}(D_{\p_1}\cap D_{\p_2}\dots \cap D_{\p_{n-i}})$. By the theory of generalised Hasse invariants of \cite{boxerthesis, strata}, one has that $Z'_i\setminus Z'_{i-1}$ is affine and we hence have that $Z_i\setminus Z_{i+1}$ is affine (where for $i=0$ the statement holds only inside the minimal compactification). Now we prove that the cohomology of $\RG(X_1\setminus (\cup_{\p\not\in J}D_{\p}), {\myom}^{(\underline{k},w)}(-D))$ vanishes for $i>i_J$. Let $\myom:={\myom}^{(\underline{k},w)}(-D)$ and $X_{1,J}:=X_1\setminus (\cup_{\p\not\in J}D_{\p})$. It follows from \cite[Theorem 3.9.6]{modularsurf} that $\myom \to Cous_Z(\myom)$ is a quasi-isomorphism, where $Cous_Z(\myom)$ is the Cousin complex associated with the stratification $Z=(Z_i)$ given above. We claim that the cohomology of $\operatorname{\RG}(X_{1,J}, \myom)$ is computed by $\Gamma(X_{1,J}, Cous_Z(\myom))$. To see this, we write explicitly the complex $Cous_Z(\myom)$ and show that it is a complex of acyclic sheaves. Since it is a complex of length $i_J$, this concludes the proof. Let $\mathcal{L}:= \bigotimes_{\p\in J} \omega_{\p}^{(p-1)}$. By \cite[Remark 4.2.32]{modularsurf}, $Cous_Z(\myom)$ is given by
\begin{align*}
0 \to &\ccolim_{k} \myom\otimes \mathcal{L}^k\to \ccolim_{k} \bigoplus_{\p\in J}\left(\myom\otimes (\mathcal{L}^k/h_{\p}^k)\right) \\&\to \ccolim_{k} \bigoplus_{\p_i\neq \p_j\in J}\left(\myom\otimes (\mathcal{L}^k/(h_{\p_i}^k,h_{\p_j}^k )\right)\to \dots \to \ccolim_{k} \myom\otimes (\mathcal{L}^k/(\sum_{\p\in J} h_{\p}^k)\to 0
\end{align*}
Note that $\myom\otimes \mathcal{L}^k= (\bigotimes_{\p\not\in J}(\omega_{\p}^{k_{\p}}\otimes \wedge^2(\mathcal{H}^1_{\p})^{(w-k_\p)/2})\otimes (\bigotimes_{\p\in J}(\omega_{\p}^{k_{\p}+k(p-1)}\otimes \wedge^2(\mathcal{H}^1_{\p})^{(w-k_\p)/2})(-D)$. We can replace all the colimits over $k\geq 0$ by the same colimits over $k\geq \max_{\p\in J}n_{\p}$. Moreover, \cite[Lemma 4.2.31]{modularsurf} tells us that
\begin{displaymath}
\ccolim_{k} \mathcal{L}^{k}/(h_{p_1}^k,\dots,h_{p_i}^k)\simeq \ccolim_{k} \mathcal{L}^{k}/(h_{p_1}^k,\dots,h_{p_i}^k)_{|X_{1,J}\setminus D_{\p_1,\dots,\p_i}},
\end{displaymath}
where $D_{\p_1,\dots,\p_i}=$Va$((\prod_{\p\in J}h_{\p})\cdot h_{\p_1}^{-1}\cdots h_{\p_i}^{-1})$.\\
Combining these observations, we obtain that every summand appearing in the $(i+1)-$th term of the above sequence is supported on Va$((h_{p_1}^k,\dots,h_{p_i}^k))\cap (X_{1,J}\setminus D_{\p_1,\dots,\p_i})$ for $\p_1,\dots,\p_i\in J$ and some $k\geq 1$. This support is equal to $(D_{\p_1}\cap \dots \cap D_{\p_i})\setminus (\cup_{\mathfrak{q}\nmid \p_1\cdots\p_i}D_{\q})$, which is affine in the minimal compactification again by the theory of generalised Hasse invariants of \cite{boxerthesis} (we can restrict to $(D_{\p_1}\cap \dots \cap D_{\p_i})$ the Hasse invariant defined on $Z'_{n-i}$ and vanishing on the $\leq n-i-1$ locus, then our support is the complement of the vanishing locus of this restriction). Since the sheaves appearing in the exact sequence are acyclic with respect to the minimal compactification by Lemma \ref{lancuspminimal} and their support is affine in the minimal compactification, we have shown $Cous_{Z}(\myom)$ is a complex of acyclic sheaves. Then $\operatorname{\RG}(X_{1,J}, \myom)$ is computed by $\Gamma(X_{1,J}, Cous_Z(\myom))$ and the latter is precisely of length $i_J$. 
\end{proof}
\end{thm}

\begin{rmk}\label{rmknaturalmapsmodp}
The quasi-isomorphisms in the statement of the above theorem can be obtained by the fact that the modules $M^{*}(\underline{k},w),M^{*}(\underline{k},w)(-D)$ are constant limits-colimits of $e(T_p)H^{*}(X_1,{\omega}^{(\underline{k},w)})$ and, respectively, of $e(T_p)H^{*}(X_1,{\omega}^{(\underline{k},w)}(-D))$. However we notice that we have natural maps 
\begin{align}\label{eq:nat}
M(\underline{k},w) \to (\colim_{n_\p})_{\p\not\in J} e(T_p)\RG(X_1, {\myom}^{(\underline{k},w)}(\sum_{\p\not\in J}n_{\p}D_\p)) \leftarrow e(T_p)\RG(X_1,{\myom}^{(\underline{k},w)}),\\
\label{eq:nat2}
M(\underline{k},w) \leftarrow (\limm_{n_\p})_{\p\in J} e(T_p)\RG(X_1, {\myom}^{(\underline{k},w)}(-\sum_{\p\in J}n_{\p}D_\p)) \to e(T_p)\RG(X_1,{\myom}^{(\underline{k},w)}),
\end{align}
where the first ones are given by the properties of limits and colimits respectively and the second ones by the ones of colimits and limits respectively. Hence, again by applying Corollary \ref{corprojector}, the statement of the theorem can be made more precise saying that all these four natural maps are quasi-isomorphisms if $k_\p\leq -1$ when $k_\p\in J$ and $k_\p\geq 3$ when $k_\p\not\in J$ (and the analogous statement for the cuspidal version).
\end{rmk}

\subsection{Characteristic zero theory} \label{char0sec}
Let $\mathfrak{X}$ be the formal completion of $X$ along its special fibre. It is the limit of the schemes $X_n=X\times_{\operatorname{Spec}R}\operatorname{Spec}(R/\wp^n)$. We denote by $\X^{\ord}$ the ordinary locus of $\X$. It is defined by choosing a lift of the total Hasse invariant of $\S$ \ref{partialhassesec} in characteristic zero and taking the formal completion of the subscheme $X^{\ord}$ of $X$ where such lift does not vanish. Even if $X^{\ord}$ does depend on the chosen lift, $X^{\ord}_n$ (and hence $\X^{\ord}$) does not. We use analogous notation for the formal completion $\M$ of $\mathcal{M}$ along $\mathcal{M}_1$ and its ordinary locus $\mathfrak{M}^{\ord}$.

In order to define the $p$-adic module we will introduce Igusa towers.
\subsubsection{Igusa tower sheaves}
We can consider the $\Z_p^{\times}$-torsor over $\M^{\ord}$ (actually defined over the larger locus corresponding to the universal abelian variety being $\p$-ordinary)
\begin{displaymath}
\pi_\p: \mathfrak{IG}_\p = \text{Isom}(\Z_p,T_\p(\mathcal{A})^{\et})\to \mathfrak{M}^{\ord}.
\end{displaymath}
More precisely one defines the scheme
\begin{displaymath}
\mathfrak{IG}_\p^{m,n}=\text{Isom}_{\mathcal{M}_m}(\Z/p^n, \mathcal{A}[\p^n]^{\et}),
\end{displaymath}
with the obvious action of $(\Z/p^n)^{\times}$ on the right. The natural morphism $\mathfrak{IG}_\p^{m,n}\to \mathcal{M}_m$ makes $\mathfrak{IG}_\p^{m,n}$ an étale cover of $\mathcal{M}_m$ with group $(\Z/p^n)^{\times}$. Letting $ \mathfrak{IG}_\p =\ccolim_m \llimm_n \mathfrak{IG}_\p^{m,n}$ we obtain a $\Z_p^{\times}$-torsor over the ordinary locus of the formal completion of $\mathcal{M}$ along $\mathcal{M}_1$.

Let $\Lambda_{\p}=R[[\Z_p^{\times}]]$ and denote by $\kappa_\p:\Z_p^{\times}\to \Lambda_{\p}^{\times}$ the universal character. We consider the invertible sheaf of $\Lambda_{\p}\hat{\otimes}\Oo_{\mathfrak{M}^{\ord}}$-modules
\begin{displaymath}
\Omega_{\p}^{\kappa}:= (\pi_{\p,\star}\Oo_{\mathfrak{IG}_\p} \otimes \Lambda_{\p})^{\Z_p^{\times}}.
\end{displaymath}
We can now consider the full Igusa tower, i.e. the $(\Z_p^{\times})^{\Sigma_{\infty}}$-torsor given as follows
\begin{displaymath}
\pi: \mathfrak{IG}=\text{Isom}_{\M^{\ord}\otimes \Oo_F}(\Z_p\otimes \Oo_F,T_p(\mathcal{A})^{\et})=\prod_{\p\mid p}\text{Isom}_{\M^{\ord}}(\Z_p,T_\p(\mathcal{A})^{\et}) \xrightarrow{\prod \pi_\p} \mathfrak{M}^{\ord}.
\end{displaymath}
We then define the sheaf
\begin{displaymath}
\Omega^{\kappa}:=(\pi_{\star}\Oo_{\mathfrak{IG}} \otimes R[[T_1\times \dots\times T_{n}]])^{\Z_p^{\times}\times \dots \times \Z_p^{\times}}
\end{displaymath}
where $(\Z_p^{\times})^{\Sigma_{\infty}}$ acts on $R[[T_1\times\dots\times T_{n}]]$, where $T_i=\Z_p^\times$, via the universal character $\kappa: (\Z_p^{\times})^{\Sigma_{\infty}}\to R[[T_1\times\dots\times T_{n}]]$ and on $\pi_{\star}\Oo_{\mathfrak{IG}}$ via the action on $\mathfrak{IG}$. 

For $\underline{k}=(k_i)_i\in \Z^{n}$ we write $\underline{k}$ for the $R$-valued homomorphism of $R[[T_1,\dots,T_{n}]]$ induced by the characters $x_i\mapsto x_i^{k_i}$, where $x_i\in T_i$. We hence can consider the sheaf
\[
\Omega^{\underline{k}}:=\Omega^{\kappa}\otimes_{\underline{k}}R.
\]
We now recall the construction of the Hodge-Tate map, which provides an isomorphism between $\Omega^{\underline{k}}$ and the restriction over the ordinary locus of the automorphic bundle $\tilde{\myom}^{(\underline{k},0)}$, constructed in $\S$ \ref{sectiontorsors} using the torsor $\mathcal{T}$.

Let $B$ be a $R/\wp^m$-algebra and $\mathcal{A}$ an ordinary semi-abelian scheme over $\operatorname{Spec}B$ of dimension $n$ with $\Oo_F$-multiplication and with polarization coprime to $p$. Let $e$ be the unit section and assume we are given a $\Oo_F$-linear trivialisation $\phi_n: \Z/p^n\Z \otimes \Oo_F\to \mathcal{A}[p^n]^{\et}$. The dual of this map (using the prime to $p$ polarisation) gives a trivialisation $\phi_n^D: \mathcal{A}[p^n]^{\circ}\to \mu_{p^n}\otimes \Oo_F$. For $n\geq m$, we obtain a $(\Z/p^n)^{\times}$-equivariant isomorphism
\begin{equation}\label{HTmn}
\HT_{m,n}(\phi_n): B\otimes \Oo_F \to e^* \Omega^1_{\mu_{p^n}\otimes \Oo_F} \xrightarrow{(\phi_n^D)^*} e^* \Omega^1_{\mathcal{A}/B},
\end{equation}
where the first map is given by sending an element $t_i$ of the basis of $B^n\simeq B\otimes \Oo_F$ to $dt_i/t$.
We then have a map $\HT_{m,n}:\mathfrak{IG}^{m,n}\to \mathcal{T}_{| \M^{\ord}_m}$ for $m\leq n$. Passing to the limits, we obtain a commutative diagram
\[
\begin{tikzcd}
\mathfrak{IG} \arrow{r}{\HT}\arrow{d} &\mathcal{T}\arrow{dl} \\
\M^{\ord}.
\end{tikzcd}
\]
Exploiting the commutativity of the Hodge-Tate map with respect to the $(\Z_p^{\times})^n$-action we obtain
\begin{lemma}\label{lemmaclassical1}
Let $\underline{k}=(k_i)_i\in \Z^{n}$. The Hodge-Tate map above gives a canonical isomorphism of $\Oo_{\M^{\ord}}$-modules
$\Omega^{\underline{k}} \simeq \tilde{\myom}^{(\underline{k},0)}$.
\end{lemma}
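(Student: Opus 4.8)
The plan is to recognise both sheaves as the weight‑$(-\underline k)$ isotypic components of the push‑forward of the structure sheaf of a torsor over $\M^{\ord}$ — the $T$‑torsor $\T$ in one case, the $(\Zp^{\times})^{\Sigma_{\infty}}$‑torsor $\mathfrak{IG}$ in the other — and then to transport one into the other by means of the Hodge–Tate map. Recall from $\S$\ref{sectiontorsors} that $\tilde{\myom}^{(\underline k,0)}=(\pi_{\T})_*(\Oo_{\T})[-\underline k]$, i.e. the summand of $(\pi_{\T})_*\Oo_{\T}$ on which $T=\res_{\Oo_F/\Z}\mathbb{G}_m$ acts through $t\mapsto t^{-\underline k}$ (the factor $(\pi_{\mathcal L})_*(\Oo_{\mathcal L})[0]$ being simply $\Oo_{\mathcal M}$), whereas by construction $\Omega^{\underline k}=\Omega^{\kappa}\otimes_{\underline k}R$ is the analogous weight‑$(-\underline k)$ part of $\pi_*\Oo_{\mathfrak{IG}}$ for the action of $(\Zp^{\times})^{\Sigma_{\infty}}=T(\Zp)$ (recall $p$ is totally split, so $T(\Zp)=(\Zp^{\times})^{\Sigma_{\infty}}$).

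First I would let $\HT$ induce the pull‑back on structure sheaves $\Oo_{\T}\to\HT_*\Oo_{\mathfrak{IG}}$, hence, after applying $(\pi_{\T})_*$ and using $\pi_{\T}\circ\HT=\pi$, a morphism $(\pi_{\T})_*\Oo_{\T}\to\pi_*\Oo_{\mathfrak{IG}}$. The crucial point is that $\HT$ is a morphism over $\M^{\ord}$ which intertwines the right $(\Zp^{\times})^{\Sigma_{\infty}}$‑action on $\mathfrak{IG}$ with the $T$‑action on $\T$ along the inclusion $T(\Zp)\hookrightarrow T$; this is precisely the $(\Z/p^n)^{\times}$‑equivariance of the finite‑level maps $\HT_{m,n}$ recorded in \eqref{HTmn}, carried to the limit over $m\le n$ and over the primes $\p\mid p$. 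Consequently the displayed morphism respects the weight‑$(-\underline k)$ isotypic parts, and restricting it to those parts produces a canonical $\Oo_{\M^{\ord}}$‑linear map $\tilde{\myom}^{(\underline k,0)}\to\Omega^{\underline k}$, whose inverse will be the asserted isomorphism.

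To check that it is an isomorphism, observe that both sides are invertible $\Oo_{\M^{\ord}}$‑modules, so it suffices to check the claim after a faithfully flat base change, and I would take $\pi\colon\mathfrak{IG}\to\M^{\ord}$ itself (a limit of the finite étale surjections $\mathfrak{IG}^{m,n}\to\mathcal M_m$, in particular pro‑étale and surjective, hence faithfully flat). Over $\mathfrak{IG}$ the torsor $\mathfrak{IG}$ carries its tautological trivialisation $\phi^{\mathrm{univ}}$ of $T_p(\mathcal A)^{\et}$, and $\HT(\phi^{\mathrm{univ}})$ is a trivialisation of $\pi^*\myom$ — this is the only place ordinarity enters, through the fact that each $\HT_{m,n}(\phi_n)$ of \eqref{HTmn} is an \emph{isomorphism}: over the ordinary locus the connected part $\mathcal A[\p^n]^{\circ}$ is of multiplicative type, so $(\phi_n^D)^*$ identifies $e^*\Omega^1$ of $\mu_{p^n}\otimes\Oo_F$ with $\myom$. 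In the trivialisations of $\pi^*\tilde{\myom}^{(\underline k,0)}$ and $\pi^*\Omega^{\underline k}$ determined respectively by $\HT(\phi^{\mathrm{univ}})$ and by $\phi^{\mathrm{univ}}$, the pull‑back of our map is the identity of $\Oo_{\mathfrak{IG}}$, so the map is an isomorphism. Finally, the construction uses nothing but the Hodge–Tate map attached to the ordinary semi‑abelian scheme $\mathcal A$ with its $\Oo_F$‑action and prime‑to‑$p$ polarisation, so it is canonical.

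The step I expect to require the most care is making the passage to weight‑$(-\underline k)$ isotypic components precise at the level of sheaves: since $p\mid\#(\Z/p^n)^{\times}$, the decomposition of $\pi_{n,*}\Oo_{\mathfrak{IG}^n}$ by characters of $(\Z/p^n)^{\times}$ is not a literal direct sum over $R$, which is exactly the reason $\Omega^{\kappa}$ is defined through the Iwasawa algebra $\Lambda_{\p}=R[[\Zp^{\times}]]$ and a subsequent specialisation. One must therefore formulate the isotypic projection, and the equivariance of $\HT$, compatibly with the relevant limit/colimit and $\Lambda_{\p}$‑linear constructions and only then specialise at $\underline k$ — after which the argument above applies verbatim.
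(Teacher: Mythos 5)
Your construction is the same as the paper's: the map is defined by precomposition with the Hodge--Tate trivialisation, $g\mapsto\bigl((x,\phi)\mapsto g(x,\HT(\phi))\bigr)$, and the isomorphism property is exactly what the paper delegates to the cited argument of Pilloni (trivialising both sides over the Igusa tower, where ordinarity makes $\HT_{m,n}$ an isomorphism, and descending). So the proposal is correct and essentially reproduces the paper's proof, merely spelling out the faithfully flat descent step that the paper handles by reference.
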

\begin{proof}
Sections of the sheaf $\Omega^{\underline{k}} $ are rules associating to $(x, \phi:\Z_p\otimes \Oo_F\simeq T_p(A_x)^{\et})\in \mathfrak{IG}(R)$ an element $f(x,\phi)\in R$ such that for every $\lambda=(\lambda_\p)_\p\in (\Z_p^{\times})^n$, 
\[
f(x, \phi\circ \lambda^{-1})= \prod_{\p\mid p}\lambda_{\p}^{k_{\p}}f(x,\phi).
\]
On the other hand, sections of the sheaf $\tilde{\myom}^{(\underline{k},0)}=(\pi_{\mathcal{T}})_*\Oo_{\mathcal{T}}[-\underline{k}]$ are rules associating to $(x, \omega:R\otimes \Oo_F \simeq e^*\Omega^1_{A_x/R})\in \mathcal{T}(R)$ an element $g(x,\omega)\in R$ such that for every $\lambda\in (R\otimes \Oo_F)^{\times}$
\[
g(x, \omega\circ \lambda^{-1})= \prod_{\p\mid p}\lambda_{\p}^{k_{\p}}g(x,\omega).
\]
The claimed isomorphism is then explicitly given by sending $g$ to the rule defined by
$(x, \phi)\mapsto g(x, \HT(\phi))$. The fact that this is an isomorphism follows as, for example, in \cite[$\S$4.2.1-4.2.2]{pillonibull}.
\end{proof}

We now want to twist the sheaf $\Omega^{\kappa}$ by a factor that will allow to recover after specialisation the sheaves $\tilde{\myom}^{(\underline{k},\underline{n})}$ and, in particular, the sheaves ${\myom}^{(\underline{k},w)}$ in light of \eqref{eq:isotorsoromega}. We perform a $p$-adic construction analogous to the one employed for constructing $\hat{\myom}^{(\underline{k},\underline{n})}$ and use Lemma \ref{lemmatildehat}.

Let $\mathfrak{IG}^{\vee}$ the torsor defined by $\text{Isom}_{\M^{\ord}\otimes \Oo_F}(T_p(\mathcal{A}^{\vee})^{\circ}, \mu_{p^{\infty}}\otimes \Oo_F)$. Via the Hodge-Tate map, we obtain a map $\mathfrak{IG}^{\vee}\to \mathcal{T}'$. More precisely, we define a map $(\mathfrak{IG}^{\vee})^{m,n}\to \mathcal{T'}_{| \M^{\ord}_m}$ for $m\leq n$. We start with a trivialisation $\phi_n: \mathcal{A}^{\vee}[p^n]^{\circ}\to \mu_{p^n}\otimes \Oo_F$ where $\mathcal{A}$ is an ordinary semi-abelian scheme over $\operatorname{Spec}B$ of dimension $n$ with $\Oo_F$-multiplication and with polarization coprime to $p$ and $B$ is a $\Z/p^m$-algebra. By fixing a canonical basis of $B\otimes \Oo_F$ as above we find the isomorphism
\begin{equation}\notag
\tilde{\HT}_{m,n}(\phi_n): B\otimes \Oo_F \to e^* \Omega^1_{\mu_{p^n}\otimes \Oo_F} \xrightarrow{(\phi_n)^*} e^* \Omega^1_{\mathcal{A}^{\vee}/B}.
\end{equation}
Identifying $B\otimes \Oo_F = \Hom_{B\otimes \Oo_F}(B\otimes \Oo_F,B\otimes \Oo_F)$, we obtain an isomorphism
\begin{equation}\label{HTmndual}
\HT_{m,n}(\phi_n):B\otimes \Oo_F \to \Hom_{B\otimes \Oo_F}(e^* \Omega^1_{\mathcal{A}^{\vee}/B}, B\otimes \Oo_F)
\end{equation}
defined by $g\mapsto g\circ (\tilde{\HT}_{m,n}(\phi_n))^{-1}$. Passing to the limits, we obtain a map $\HT: \mathfrak{IG}^{\vee}\to \mathcal{T}'$ commuting with the projections of the torsors over $\M^{\ord}$. 

We can now define a sheaf of $R[[T_1\times\dots\times T_n\times T_1'\times\dots\times T_n']]$-modules, where $T_i'=\Z_p^\times$.
\begin{defi} Consider $\mathbb{T}$ the maximal torus of $\res_{F/\Q}\GL_2(\Z_p)$ and write $\mathbb{T}=(\Z_p^{\times})^{n}\times (\Z_p^{\times})^{n} \ni (z,t)=\SmallMatrix{zt &0\\0&z^{-1}}$. We write $(\kappa, \kappa'): (\Z_p^{\times})^{n}\times \mathbb{T}\to R[[T_1\times\dots\times T_n\times T_1'\times\dots\times T_n']]$ for the character given by the universal character $\kappa:(\Z_p^{\times})^{n}\to R [[T_1\times\dots\times T_n]]$ and the character $\kappa':T(\Z_p) \to R [[T'_1\times\dots\times T'_n]]$ obtained by the composition of the projection map $\mathbb{T}\to (\Z_p^{\times})^{n}, (z,t)\to t$ with the universal character $(\Z_p^{\times})^{n}\to R [[T'_1\times\dots\times T'_n]]$. We then let
\begin{equation}\label{Migusasheaf}
\Omega^{(\kappa, \kappa')}:= \left(\left(\pi_{\star}\Oo_{\mathfrak{IG}} \otimes_{\Oo_{\M^{\ord}}} (\pi_{\mathfrak{IG}\times \mathfrak{IG}^{\vee}})_{\star}\Oo_{\mathfrak{IG}\times \mathfrak{IG}^{\vee}}\right) \hat{\otimes} R[[T_1\times\dots\times T_n\times T_1'\times\dots\times T_n']]\right)^{(\Z_p^{\times})^{n}\times \mathbb{T}},
\end{equation}
where $(\Z_p^{\times})^{n}\times \mathbb{T}$ acts on the Iwasawa algebra by $(\kappa,\kappa')$ and on the factors on the left as follows: the natural action of the first $n$-copies of $\Z_p^{\times}$ on $\mathfrak{IG}$ defines an action of $(\Z_p^{\times})^n$ on $\pi_{\star}\Oo_{\mathfrak{IG}}$ and $\mathbb{T}$ acts on $(\pi_{\mathfrak{IG}\times \mathfrak{IG}^{\vee}})_{\star}\Oo_{\mathfrak{IG}\times \mathfrak{IG}^{\vee}}$ via the decomposition $\mathbb{T}=(\Z_p^{\times})^{n}\times (\Z_p^{\times})^{n}$ and the natural actions of $(\Z_p^{\times})^{n}$ on each of the two terms. 
\end{defi}
This is, roughly speaking, the $p$-adic analogue of what we obtained via the pullback by $s$ of the sheaf $\Oo_{\mathcal{L}}$ in $\S$ \ref{sectiontorsors}. Similarly as above, for $(\underline{k},\underline{n})\in \Z^n\times \Z^n$, we denote again by $(\underline{k},\underline{n})$ the homomorphism of $\Z_p[[T_1\times\dots\times T_n\times T_1'\times\dots\times T_n']]$ given by the characters $x_i\mapsto x_i^{k_i}, x_i'\mapsto (x_i')^{n_i}$, for $x_i\in T_i, x_i'\in T_i'$. Let $$\Omega^{(\underline{k},\underline{n})}:=\Omega^{(\kappa,\kappa')}\otimes_{(\underline{k},\underline{n})}R.$$ We obtain the following result.
\begin{lemma}\label{lemmaMigusaclass}
Let $(\underline{k},\underline{n})\in \Z^n\times \Z^n$. There is a canonical isomorphism of $\Oo_{\M^{\ord}}$-modules $\Omega^{(\underline{k},\underline{n})}\simeq \tilde{\myom}^{(\underline{k},\underline{n})}$.
\end{lemma}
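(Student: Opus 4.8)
The plan is to reduce everything to two facts already established: Lemma~\ref{lemmatildehat}, which identifies $\tilde{\myom}^{(\underline{k},\underline{n})}$ with $\hat{\myom}^{(\underline{k},\underline{n})}=(\pi_{\mathcal{T}})_{*}(\Oo_{\mathcal{T}})[-\underline{k}]\otimes(\pi_{\mathcal{T}\times\mathcal{T'}})_{*}(s^{*}\Oo_{\mathcal{L}})[-\underline{n}]$, and Lemma~\ref{lemmaclassical1}, which via the Hodge--Tate map $\HT\colon\mathfrak{IG}\to\mathcal{T}$ identifies $\Omega^{\underline{k}}$ with $\tilde{\myom}^{(\underline{k},0)}=(\pi_{\mathcal{T}})_{*}(\Oo_{\mathcal{T}})[-\underline{k}]$. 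It therefore suffices to produce a canonical isomorphism $\Omega^{(\underline{k},\underline{n})}\simeq\hat{\myom}^{(\underline{k},\underline{n})}$ and to compose with Lemma~\ref{lemmatildehat}.

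First I would record, exactly as for $\Omega^{\kappa}$ in the proof of Lemma~\ref{lemmaclassical1}, that the sheaf $\Omega^{(\kappa,\kappa')}$ of \eqref{Migusasheaf} is invertible over $R[[T_{1},\dots,T_{n},T_{1}',\dots,T_{n}']]\,\hat{\otimes}\,\Oo_{\M^{\ord}}$, so that its specialisation $\Omega^{(\underline{k},\underline{n})}=\Omega^{(\kappa,\kappa')}\otimes_{(\underline{k},\underline{n})}R$ is canonically the $(\underline{k},\underline{n})$-isotypic part of $\pi_{\star}\Oo_{\mathfrak{IG}}\otimes_{\Oo_{\M^{\ord}}}(\pi_{\mathfrak{IG}\times\mathfrak{IG}^{\vee}})_{\star}\Oo_{\mathfrak{IG}\times\mathfrak{IG}^{\vee}}$ for the $(\Z_p^{\times})^{n}\times\mathbb{T}$-action, and that it decomposes as a tensor product
\[
\Omega^{(\underline{k},\underline{n})}\simeq(\pi_{\star}\Oo_{\mathfrak{IG}})[-\underline{k}]\otimes_{\Oo_{\M^{\ord}}}\bigl((\pi_{\mathfrak{IG}\times\mathfrak{IG}^{\vee}})_{\star}\Oo_{\mathfrak{IG}\times\mathfrak{IG}^{\vee}}\bigr)[-\underline{n}],
\]
where the second bracket is the subsheaf on which $\mathbb{T}$ acts through $(z,t)\mapsto t^{-\underline{n}}$, in particular trivially under the ``$z$''-copy. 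For the first factor Lemma~\ref{lemmaclassical1} already gives $(\pi_{\star}\Oo_{\mathfrak{IG}})[-\underline{k}]\simeq(\pi_{\mathcal{T}})_{*}(\Oo_{\mathcal{T}})[-\underline{k}]$, so the whole problem is concentrated in the second factor.

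For the second factor I would use the Hodge--Tate map together with its dual companion \eqref{HTmndual}: passing to the limit they assemble into maps $\HT\colon\mathfrak{IG}\to\mathcal{T}$ and $\HT\colon\mathfrak{IG}^{\vee}\to\mathcal{T}'$ over $\M^{\ord}$, hence a morphism $\HT\times\HT\colon\mathfrak{IG}\times_{\M^{\ord}}\mathfrak{IG}^{\vee}\to(\mathcal{T}\times_{\M}\mathcal{T}')_{|\M^{\ord}}$ equivariant for the inclusion $(\Z_p^{\times})^{n}\times(\Z_p^{\times})^{n}\hookrightarrow T\times T$ (each factor being, as in Lemma~\ref{lemmaclassical1}, the ``base-change a $\Z_p$-trivialisation to an $R$-trivialisation'' map). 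Composing with the map $s$ of~$\S$\ref{sectiontorsors} gives $\mathfrak{IG}\times\mathfrak{IG}^{\vee}\to\mathcal{L}$ over $\M^{\ord}$, and pulling back $\Oo_{\mathcal{L}}$ produces an equivariant map $(\pi_{\mathcal{T}\times\mathcal{T'}})_{*}(s^{*}\Oo_{\mathcal{L}})\to(\pi_{\mathfrak{IG}\times\mathfrak{IG}^{\vee}})_{\star}\Oo_{\mathfrak{IG}\times\mathfrak{IG}^{\vee}}$. The key claim, which I would check fibrewise over $\M^{\ord}$, is that this map identifies its source with the subsheaf of the target on which the ``first component'' of $\mathbb{T}=(\Z_p^{\times})^{n}\times(\Z_p^{\times})^{n}$ acts trivially --- the exact Igusa-tower analogue of the statement of~$\S$\ref{sectiontorsors} that $s^{*}\Oo_{\mathcal{L}}\subset\Oo_{\mathcal{T}\times\mathcal{T'}}$ is the invariants under the first copy of $T$. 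Taking $[-\underline{n}]$-components and applying Lemma~\ref{lemmaclassical1} separately to $\mathfrak{IG}$ and to $\mathfrak{IG}^{\vee}$, this yields $\bigl((\pi_{\mathfrak{IG}\times\mathfrak{IG}^{\vee}})_{\star}\Oo_{\mathfrak{IG}\times\mathfrak{IG}^{\vee}}\bigr)[-\underline{n}]\simeq(\pi_{\mathcal{T}\times\mathcal{T'}})_{*}(s^{*}\Oo_{\mathcal{L}})[-\underline{n}]$. Tensoring with the isomorphism of the previous paragraph and with Lemma~\ref{lemmatildehat} gives the assertion; all the maps involved (Hodge--Tate, $s$, Lemma~\ref{lemmatildehat}) are canonical, hence so is the composite.

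I expect the only delicate point to be the bookkeeping in the last paragraph: matching the $(z,t)$-parametrisation of $\mathbb{T}$ used in \eqref{Migusasheaf} with the $(t,\epsilon)$-parametrisation of $T\times T$ used to define $\hat{\myom}^{(\underline{k},\underline{n})}$, and verifying that $\HT\times\HT$ is equivariant in precisely the way that sends ``the first copy acts trivially'' to ``the first copy acts trivially''. Concretely this reduces to the classical fact that over the ordinary locus the Hodge--Tate maps on $\mathfrak{IG}$ and on $\mathfrak{IG}^{\vee}$ are compatible with the Hodge filtration \eqref{hodgeexseq} and with the prime-to-$p$ polarisation (after fixing a generator of $\mathfrak{d}^{-1}$), i.e. that $\HT$ identifies the connected--étale sequence of $\mathcal{A}[p^{\infty}]$ with the Hodge filtration; this is where the content of the lemma lies, the rest being formal manipulation of torsors and isotypic components.
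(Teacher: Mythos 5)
Your proposal takes essentially the same route as the paper's proof: reduce to $\hat{\myom}^{(\underline{k},\underline{n})}$ via Lemma \ref{lemmatildehat} and compare with $\Omega^{(\underline{k},\underline{n})}$ by precomposing sections with the two Hodge--Tate trivialisations, i.e. $g\mapsto g(x,\HT(\phi),\HT(\psi_1)\otimes\HT(\psi_2))$, which is precisely the map the paper writes out explicitly in terms of rules (your torsor/isotypic-component formulation is only a repackaging of that). One small caveat: your intermediate ``key claim'' is stronger than what is true or needed --- the pullback does not identify $(\pi_{\mathcal{T}\times\mathcal{T}'})_{*}(s^{*}\Oo_{\mathcal{L}})$ with the full subsheaf of $(\pi_{\mathfrak{IG}\times\mathfrak{IG}^{\vee}})_{\star}\Oo_{\mathfrak{IG}\times\mathfrak{IG}^{\vee}}$ on which the first copy acts trivially (the Igusa pushforward contains isotypic pieces for non-algebraic characters of $\Z_p^{\times}$ as well) --- but since you immediately pass to the $[-\underline{n}]$-isotypic components, which is all the lemma requires and exactly what the Hodge--Tate equivariance gives, the argument goes through as in the paper.
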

\begin{proof}
We prove $\Omega^{(\underline{k},\underline{n})}\simeq \hat{\myom}^{(\underline{k},\underline{n})}$ and the result will follow from Lemma \ref{lemmatildehat}. The proof is similar to the one of Lemma \ref{lemmaclassical1}. Sections of the sheaf $\Omega^{(\underline{k},\underline{n})} $ are rules associating to $(x, \phi:\Z_p\otimes \Oo_F\simeq T_p(A_x)^{\et}, \psi_1:\Z_p\otimes \Oo_F\simeq T_p(A_x)^{\et},\psi_2: T_p(A^{\vee}_x)^{\circ}\simeq \mu_{p^{\infty}}\otimes \Oo_F)$ an element $f(x,\phi, \psi_1,\psi_2)\in R$ such that for every $\lambda=(\lambda_\p)_\p, t=(t_\p)_\p, z=(z_\p)_\p \in (\Z_p^{\times})^n$, 
\[
f(x, \phi\circ \lambda^{-1},(\psi_1,\psi_2)\circ \SmallMatrix{zt &0\\0&z^{-1}}^{-1})= \prod_{\p\mid p}\lambda_{\p}^{k_{\p}}t_{\p}^{n_{\p}}f(x,\phi,\psi_1,\psi_2).
\]
On the other hand, sections of the sheaf $\hat{\myom}^{(\underline{k},\underline{n})}$ are rules associating to $(x, \omega:R\otimes \Oo_F \simeq e^*\Omega^1_{A_x/R},\alpha\otimes\beta)$, where $\alpha: R\otimes \Oo_F \simeq e^*\Omega^1_{A_x/R},\beta: R\otimes \Oo_F \simeq (e^*\Omega^1_{A_x^{\vee}/R})^{\vee}$ an element $g(x,\omega,\alpha\otimes\beta)\in R$ such that for every $\lambda,\mu \in (R\otimes \Oo_F)^{\times}$
\[
g(x, \omega\circ \lambda^{-1},(\alpha\otimes \beta)\mu^{-1})= \prod_{\p\mid p}\lambda_{\p}^{k_{\p}}\mu_{\p}^{n_{\p}}g(x,\omega,\alpha\otimes \beta).
\]
We can send $g$ to the rule $(x,\phi,\psi_1,\psi_2)\mapsto g(x,\HT(\phi),\HT(\psi_1)\otimes \HT(\psi_2))$.
\end{proof}
In particular, if there exists $w\in\Z$ such that $k_{\p}\equiv w\mod 2$ for every $\p\mid p$, we have an isomorphism
\[
\Omega^{(\underline{k},\tfrac{w-\underline{k}}{2})}\simeq \myom^{(\underline{k},w)}.
\]
\subsubsection{From the moduli space to the Shimura variety}\label{secdescentigusa} Let $T=\prod_{\p\mid p}(\Z_p)^{\times}$. Recall that the sheaf $\Omega^{(\kappa,\kappa')}$ defined above is a sheaf of $R[[T^2]]\otimes \Oo_{\M^{\ord}}$-modules. 

We now let $T_0=(\Z_p^{\times}/\{\pm 1\})^{n+1}$, which can be identified with a quotient of $T\times \Z_p^{\times}$. Note that for $x\in \Z_p^{\times}/\{\pm 1\}$ a square-root $x^{1/2}\in \Z_p^{\times}/\{\pm 1\}$ exists and is unique. We denote by $x_0$ the projection to $\Z_p^{\times}/\{\pm 1\}$ of an element $x\in \Z_p^{\times}$. Let
\[
\Lambda:=R[[T_0]]=(\hat{\otimes}_{\p\mid p}R[[\Z_p^{\times}/\{\pm 1\}]])\hat{\otimes}R[[\Z_p^{\times}/\{\pm 1\}]].
\]
We have a canonical character $(\kappa_1,\kappa_2): T_0 \to \Lambda^{\times}$, where $\kappa_1: (\Z_p^{\times}/\{\pm 1\})^{n}\to R[[T_0]]$ denotes the character on the first $n$-components and $\kappa_2:(\Z_p^{\times}/\{\pm 1\})\to R[[T_0]]$ the one on the last one. Moreover, composing the canonical projections $\Z_p^{\times}\to\Z_p^{\times}/\{\pm 1\}$ with the map
\begin{align*}
&(\Z_p^{\times}/\{\pm 1\})^{n} \times (\Z_p^{\times}/\{\pm 1\})^{n} \to T_0 \\
&((x_{\p})_{\p},(y_{\p})_{\p}) \mapsto ((x_{\p}y_{\p}^{-1/2})_{\p},\prod_{\p}y_{\p}^{1/2}).
\end{align*}
we obtain a map $R[[T^2]]\to \Lambda$. We can therefore define 
\begin{equation}\label{eq:MIgusasheaf2}
\Omega^{(\kappa_1,\kappa_2)}=\Omega^{(\kappa,\kappa')}\otimes_{R[[T^2]]}\Lambda. 
\end{equation}
If $\underline{k}\in \Z^{n}, w\in \Z$,
the algebra homomorphism $(2\underline{k},\underline{n}): R[[T^2]]\to R$ induced by the character $x_{\p}\mapsto x_{\p}^{2k_{\p}},x_{\p}\mapsto x_{\p}^{n_{\p}}$ with $n_\p=w-k_{\p}$ factors through a morphism $(2\underline{k},2w): \Lambda\to R$, since the character factors through the character of $T_0$ given by $((x_\p)_\p, y)\mapsto y^{2w}\cdot\prod x_\p^{2k_\p}$.\\
We have a diagonal embedding of $\Oo_{F,+}^{\times}\subset (\Oo_{F,(p)})_{+}^{\times}$ in $T\times T$, given by sending $x$ to $((x_{\p})_{\p},(x_{\p})_{\p})$, where $x_{\p}\in \Z_p^{\times}$ is the image of $x$ in the completion of $F$ at $\p$. Now notice that on $T_0$, the map $x\mapsto x^2$ is bijective. We denote by $x^{1/2}$ the preimage of $x\in T_0$ under this map. Finally we let $d$ be the following character
\begin{align*}
d: &\Oo_{F,+}^{\times} \to (R[[T_0^2]])^{\times}\\ &x\to ({((x_{\p})_{\p})_{0}}^{1/2},((x_{\p})_{\p})_0).
\end{align*} 
We define an action of $x\in \Oo_{F,+}^{\times}$ on $\Omega^{(\kappa,\kappa')}$ by
\[
x^*\Omega^{(\kappa,\kappa')}=\Omega^{(\kappa,\kappa')} \to \Omega^{(\kappa,\kappa')},
\]
where first map is the tautological isomorphism (being the construction of $\Omega^{(\kappa,\kappa')}$ independent on the polarisation) and the second one is multiplication by $d(x)$. 
For an algebraic character $(2\underline{k},2w)$ as above, we obtain 
\begin{equation}\label{eq:specialiso}
\Omega^{(\kappa_1,\kappa_2)}\otimes_{\Lambda, (2\underline{k},2w)} R \simeq (\myom^{(2\underline{k},2w)})_{|\X^{\ord}_R}.
\end{equation}
So classical specialisations of the sheaf $\Omega^{(\kappa_1,\kappa_2)}$ recovers automorphic sheaves of even weights\footnote{Note that $2k_i\equiv 2w$ mod $2$ for every $i=1,\dots, n$ and $\tfrac{2w-2k_i}{2}=w-k_i$}.

If we want to get odd weights, it is enough to twist $\Omega^{(\kappa_1,\kappa_2)}$ as follows. Consider the sheaf $\myom^{1}= \otimes_{\tau}\omega_{\tau}$ and let
\begin{equation}\label{eq:twistIgusasheaf}
\Omega^{(\kappa_1+1,\kappa_2)}=\Omega^{(\kappa_1,\kappa_2)} \otimes_{\Oo_{\X^{\ord}}} \myom^{1}.
\end{equation}
This way the isomorphism \eqref{eq:specialiso} induces an isomorphism 
$$\Omega^{(\kappa_1+1,\kappa_2)}\otimes_{\Lambda, (2\underline{k},w)} R \simeq {\myom^{(2\underline{k}+1,2w+1)}}_{|\X^{\ord}}\footnote{Note again that $2k_i+1\equiv 2w+1$ mod $2$ for every $i=1,\dots, n$ and $\tfrac{2w+1-(2k_i+1)}{2}=w-k_i$.}$$ for all $(\underline{k},w)\in \Z^{n+1}$.

\begin{rmk}
For clarity, we write explicitly the action of the units on sections of the sheaf $\Omega^{(\kappa_1,\kappa_2)}$. They are functions $f$ on $((A,\iota, \lambda, \alpha ),\phi:\Z_p\otimes \Oo_F\simeq T_p(A)^{\et}, \psi_1:\Z_p\otimes \Oo_F\simeq T_p(A)^{\et},\psi_2:T_p(A^{\vee}_x)^{\circ}\simeq \mu_{p^{\infty}}\otimes \Oo_F)$, such that for any $\lambda=(\lambda_\p)_\p, t=(t_\p)_\p, z=(z_\p)_\p \in (\Z_p^{\times})^n$, they satisfy
$$f(A,\iota, \lambda, \alpha, \phi\circ \lambda^{-1}, (\psi_1,\psi_2) \circ \SmallMatrix{zt &0\\0&z^{-1}}^{-1})=\kappa_1((\lambda_{0,\p}t_{0,\p}^{-1/2})_{\p}){\kappa_2}(\prod t_{\p}^{1/2}) f(A,\iota, \lambda, \alpha, \phi, \psi_1,\psi_2).$$
Additionally they are invariant by the action of the units $\epsilon$, i.e. $\epsilon\cdot f(A,\iota, \lambda, \alpha, \phi, \psi_1,\psi_2)=f(A,\iota, \lambda, \alpha, \phi, \psi_1,\psi_2)$, where the action of the units is defined by 
$$\epsilon\cdot f(A,\iota, \lambda, \alpha, \phi, \psi_1,\psi_2)=f(A,\iota, \epsilon\cdot\lambda, \alpha, \phi, (\psi_1,\psi_2)).$$
The fact that there exist non-trivial global sections of this sheaf follows from the fact that $\epsilon^*((A,\iota, \lambda, \alpha, \phi, \psi_1,\psi_2))$ is equal to $(A,\iota, \epsilon^2\cdot\lambda, \alpha, \phi\circ \epsilon, (\psi_1,\psi_2)\circ \SmallMatrix{\epsilon &0\\0&\epsilon})$ and $ \kappa_1((\epsilon_{\p}\epsilon_{\p}^{-1})_{\p})\cdot {\kappa_2}(\prod\epsilon_{\p})=1$. 
\end{rmk}
\begin{rmk}\label{rmkhida}
This is a good point where we can briefly draw a comparison with \cite{hidaannals, hidaalgebra}. The construction in \emph{op. cit.} is of course very different, since the author works with Hecke algebras and on quaternionic Shimura varieties (via the Jacquet--Langlands correspondence). The reader may however be confused by the discrepancy on the definition of weights and universal characters of $\Lambda$. Hida considers the map $T^2\to T\times  \Z_p^{\times}$ given by $((x_\p)_\p,(y_\p)_\p)\mapsto((y_\p x_\p^{-1})_\p, \prod x_\p)$. The character $T^2 \to \Z_p^{\times}$, $x_{\p}\mapsto x_{\p}^{k_{\p}},y_{\p}\mapsto x_{\p}^{v_{\p}}$ with $v_\p=\tfrac{w-k_{\p}}{2}$ factors through the character $(\underline{v},w)$ of $T\times  \Z_p^{\times}$ given by $((a_\p)_\p ,z)\mapsto z^w\prod_\p a_\p^{v_{\p}}$. In particular these characters are trivial on the units of $\Oo_F$ embedded diagonally in $T\times T$, whereas the ones we considered above are trivial on the units of $\Oo_F$ embedded in $T\times T$ via $\epsilon\to (\epsilon,\epsilon^2)$. This turns out to be the correct thing to do with our construction in light of the previous remark.
\end{rmk}

From now on we will only work with the sheaf $\Omega^{(\kappa_1,\kappa_2)}$ interpolating even weights. All the constructions can be carried out taking into account the twist in definition \eqref{eq:twistIgusasheaf} to obtain analogous results for odd weights, but, for simplicity, we don't do this explicitly here. 

\subsubsection{$U_\p$ and Frobenius operators} Being the cohomology of the sheaves we have just defined \emph{too big}, we define some operators, whose associated idempotents will cut out a smaller part of the cohomology for which we can prove the classicality result. 

We define the partial Frobenius $F_{\p}:(\M^{\cc})^{\ord}\to (\M^{\cc'})^{\ord}$ to be the morphism sending $(A,\iota,\lambda, \alpha_{K^p})\mapsto (A/H_{\p},\iota',\lambda', \alpha'_{K^p})$, where $H_{\p}\subset A[\p]$ is the multiplicative subgroup of the $\p$-torsion of $A$, $\iota',\alpha'_{K^p}$ are defined by the composition of $\iota, \alpha_{K^p}$ with the isogeny $\pi_{\p}:A\to A/H_{\p}$ and $\lambda'=\theta_{\cc}\circ\tilde{\lambda}$, where $\tilde{\lambda}$ is a $\cc\p$-polarisation of $A/H_{\p}$ determined by the commutative diagram
\begin{equation}\notag
\begin{tikzcd}
A/H_{\p}\otimes_{\Oo_F}\cc\p \arrow{d}{\tilde{\lambda}} \arrow{r}{\tilde{\pi}_{\p}} &A \otimes_{\Oo_F}\cc\arrow{d}{\lambda}\\
(A/H_{\p})^{\vee} \arrow{r}{\pi_{\p}^{\vee}} &A^{\vee}
\end{tikzcd}
\end{equation} 
where $\tilde{\pi}_{\p}$ is the unique map such that the composition 
$A\otimes \cc\p \xrightarrow{\pi_{\p}}  A/H_{\p}\otimes_{\Oo_F}\cc\p \xrightarrow{\tilde{\pi}_{\p}} A \otimes_{\Oo_F}\cc$ 
is the canonical map with kernel $A[\p]\otimes \p$ and $\theta_{\cc}:\cc' \to \cc\p$ is an isomorphism as in the definition of $p_2$ in \eqref{eq:p2}, unique up to an element of $\Oo_{F,+}^{\times}$. We therefore have an isogeny $\pi_{\p}^D$ defined by the commutative diagram
\begin{equation}\label{dualiso}
\begin{tikzcd}
A/H_{\p}\otimes_{\Oo_F}\cc' \arrow{d}{\lambda'} \arrow{r}{\pi_{\p}^D} &A \otimes_{\Oo_F}\cc\arrow{d}{\lambda}\\
(A/H_{\p})^{\vee} \arrow{r}{\pi_{\p}^{\vee}} &A^{\vee}
\end{tikzcd}
\end{equation}
Hence $F_{\p}$ is well defined up to $\Oo_{F,+}^{\times}$ and it is equivariant by the action of $\Delta(K)$. We therefore obtain a well defined morphism
\[
F_{\p}: \X^{\ord}\to\X^{\ord}.
\]
The following result follows from \cite[Lemma 3.14]{tianxiao}.
\begin{lemma}\label{lemmatrace}
The trace map $\tr_{F_{\p}}:(F_{\p})_{\star}\Oo_{\X^{\ord}}\to \Oo_{\X^{\ord}}$ satisfies $\tr_{F_{\p}}((F_{\p})_{\star}\Oo_{\X^{\ord}}) \subset p\Oo_{\X^{\ord}}$.
\end{lemma}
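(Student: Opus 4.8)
The plan is to reduce the inclusion $\tr_{F_\p}\big((F_\p)_\star\Oo_{\X^{\ord}}\big)\subset p\Oo_{\X^{\ord}}$ to a vanishing statement for the reduction of the trace map modulo $\wp$, and then to invoke the elementary fact that a purely inseparable isogeny of degree $p$ in characteristic $p$ has vanishing trace. First I would observe that $R$ is unramified over $\Z_p$: since $p$ is odd and splits completely in $F$, for every totally positive unit $\epsilon$ the extension $F(\sqrt{\epsilon})/F$ is unramified above $p$ (the square root of a $p$-adic unit generates an unramified extension of $\Q_p$ when $p$ is odd), hence so is its Galois closure $L/\Q$, and therefore $pR=\wp R$, i.e. $p\Oo_{\X^{\ord}}=\wp\Oo_{\X^{\ord}}$. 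Since $X$ is smooth over $\Z_{(p)}$, the sheaf $\Oo_{\X^{\ord}}$ is $R$-flat, in particular $\wp$-torsion free. As $F_\p$ is finite locally free (of degree $p$) by \cite[Lemma 3.14]{tianxiao}, the formation of $\tr_{F_\p}$ commutes with the base change $R\to\Ff$; writing $\bar F_\p\colon X_1^{\ord}\to X_1^{\ord}$ for the reduction of $F_\p$ modulo $\wp$, it therefore suffices to prove $\tr_{\bar F_\p}=0$.

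The next step is to identify $\bar F_\p$ as a finite flat purely inseparable morphism of degree $p$. This is part of \cite[Lemma 3.14]{tianxiao}, and can also be read off from the analysis of the multiplicative locus in the proof of Proposition \ref{propTpnorm}: there $p_1$ is an isomorphism and $p_2$ is totally ramified of degree $p$ on $X_0(\p)_1^{m}$, so that $\bar F_\p=\bar p_2\circ\big(\bar p_1|_{X_0(\p)_1^{m}}\big)^{-1}$ is purely inseparable of degree $p$. Moduli-theoretically, on the special fibre $H_\p$ is the connected (multiplicative) part of $\mathcal{A}[\p]$, so $\pi_\p\bmod\wp$ is purely inseparable of degree $p$, and $\bar F_\p$ is the corresponding ``$\p$-partial Frobenius'': the composite $\prod_{\p\mid p}\bar F_\p$ is the relative Frobenius of $X_1^{\ord}$, a degree $p^n$ self-map (using that $p$ acts trivially on $\Sigma_\infty$), and each factor $\bar F_\p$, being a morphism of smooth $n$-dimensional $\Ff$-varieties an iterated composite of which is purely inseparable of degree $p^n$, is itself finite flat purely inseparable of degree $p$.

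Finally, for a finite locally free purely inseparable morphism $f\colon Z'\to Z$ of degree $p$ between normal $\Ff$-varieties, the trace $\tr_f\colon f_\star\Oo_{Z'}\to\Oo_Z$ vanishes: checking this after faithfully flat base change to local rings, where $f_\star\Oo_{Z'}$ is free of rank $p$, one has for a local section $x$ of $\Oo_{Z'}$ that $x^p\in\Oo_Z$ (normality forces $\Oo_{Z'}\cap K=\Oo_Z$ inside the function field $K'$), and the characteristic polynomial of multiplication by $x$ is $T^p-x^p$ (it has degree $p$ and agrees with $T^p-x^p$ over $K$, where $K'/K$ is purely inseparable of degree $p$); its $T^{p-1}$-coefficient, which equals $-\tr(\cdot\,x)$, vanishes in characteristic $p$, so $\tr_f$ annihilates a generating set and hence is identically zero. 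Applying this with $f=\bar F_\p$ finishes the proof. I expect the main obstacle to be the middle step: making precise that $\bar F_\p$ is a purely inseparable degree-$p$ map requires the moduli description of $H_\p$ on the special fibre together with the harmless but fiddly bookkeeping of the polarization ideals $\cc,\cc'$ and of $\Delta(K)$-equivariance, which is exactly the content imported from \cite[Lemma 3.14]{tianxiao} and the special-fibre computation underlying Proposition \ref{propTpnorm}.
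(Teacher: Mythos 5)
Your argument is correct, but it is a genuinely different route from the paper's: the paper gives no argument at all here and simply defers to \cite[Lemma 3.14]{tianxiao}, whereas you supply a self-contained proof. Your two main points — (1) reduce to the special fibre, using that $L/\Q$ is unramified at $p$ (your justification via $\Q_p(\sqrt{u})/\Q_p$ being unramified for $p$ odd is fine; alternatively one can sidestep this entirely by remarking that $F_\p$ and its trace are already defined over $\Z_p$, so the statement can be checked before base change to $R$), together with base-change compatibility of the trace of a finite locally free morphism; and (2) the trace of a finite locally free, purely inseparable, degree-$p$ morphism in characteristic $p$ vanishes, since the relevant characteristic polynomial is $T^p-x^p$ — are exactly the standard content behind the cited lemma, so nothing is lost. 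Two small remarks. First, the finiteness/local freeness of $F_\p$ that you import is implicitly presupposed by the statement of the lemma itself (otherwise $\tr_{F_\p}$ would not be defined), so relying on it is not a gap. Second, your ``composite of the partial Frobenii is the ($p$-power) Frobenius'' argument, taken alone, only gives $\prod_\p \deg \bar F_\p=p^n$ with each factor purely inseparable; pinning each degree to exactly $p$ needs either the symmetry among the primes or, better, the paper's own special-fibre analysis in the proof of Proposition \ref{propTpnorm} (on $X_0(\p)_1^{m}$ the map $p_1$ is an isomorphism and $p_2$ is purely inseparable of degree $p$, and $\bar F_\p=\bar p_2\circ(\bar p_1|_{X_0(\p)_1^m})^{-1}$ by \eqref{diagramsp1p2}) — which you also invoke, so the proof goes through; I would lead with that identification and keep the Frobenius-factorisation as a consistency check.
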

The morphism $F_{\p}:(\M^{\cc})^{\ord}\to (\M^{\cc'})^{\ord}$ extends to a morphism between the partial Igusa towers $\mathfrak{IG}_{\cc}, \mathfrak{IG}_{\cc'}$ over $(\M^{\cc})^{\ord}$ and $ (\M^{\cc'})^{\ord}$ given by $(A,\iota,\lambda, \alpha_{K^p}, \varphi:\Z_p\simeq T_{\p}(A)^{\et})\mapsto (A/H_{\p},\iota',\lambda', \alpha'_{K^p}, \varphi':\Z_p\simeq T_{\p}(A/H_{\p})^{\et})$, where $\varphi':\Z_p\xrightarrow{\varphi}T_{\p}(A)^{\et}\simeq T_{\p}(A/H_{\p})^{\et}$ and the last isomorphism is induced by the isogeny $\pi_{\p}$. We also have a morphism between the partial igusa towers $\mathfrak{IG}^{\vee}_{\cc}, \mathfrak{IG}^{\vee}_{\cc'}$ obtained as follows: the dual isogeny $\pi_{\p}^{\vee}: (A/H_{\p})^{\vee} \to A^{\vee}$ induces an isomorphism $T_{\p}((A/H_{\p})^{\vee})^{\circ} \to T_{\p}(A^{\vee})^{\circ}$. Composing this with the rigidification $\varphi: T_{\p}(A^{\vee})^{\circ}\simeq \mu_{p^{\infty}}$, we obtain an isomorphism $\varphi\circ \pi_{\p}^{\vee}: T_{\p}((A/H_{\p})^{\vee})^{\circ}\to \mu_{p^{\infty}}$. So the morphism induced by $F_{\p}$ on the \emph{dual} Igusa towers is given by $(A,\iota,\lambda, \alpha_{K^p}, \varphi:\Z_p\simeq T_{\p}(A^{\vee})^{\et})\mapsto (A/H_{\p},\iota',\lambda', \alpha'_{K^p}, \varphi\circ\pi_{\p}^{\vee}:\Z_p\simeq T_{\p}((A/H_{\p})^{\vee})^{\et})$. With these constructions in mind, we prove the following.
\begin{lemma}
$F_{\p}$ induces two well defined maps $$F_{\p}:(F_{\p})^{\star}\OG\to \OG \text{  and  }U_{\p}: (F_{\p})_{\star}\OG\to \OG.$$
\end{lemma}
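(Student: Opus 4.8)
The plan is to construct $F_\p$ directly from the morphisms of Igusa towers that cover $F_\p\colon\X^{\ord}\to\X^{\ord}$, and $U_\p$ by composing with the normalised trace of $F_\p$. For the first map: the morphisms $\mathfrak{IG}\to\mathfrak{IG}$ and $\mathfrak{IG}^{\vee}\to\mathfrak{IG}^{\vee}$ constructed just above are built by transporting an étale (resp.\ multiplicative) trivialisation along the isomorphisms $T_{\p}(\mathcal{A})^{\et}\xrightarrow{\sim}T_{\p}(\mathcal{A}/H_{\p})^{\et}$ induced by $\pi_{\p}$ (whose kernel is connected) and $T_{\p}((\mathcal{A}/H_{\p})^{\vee})^{\circ}\xrightarrow{\sim}T_{\p}(\mathcal{A}^{\vee})^{\circ}$ induced by $\pi_{\p}^{\vee}$ (whose kernel is étale); since these are $\Oo_{F}$-linear, the morphisms are equivariant for the actions of $(\Z_p^{\times})^{n}$ and $\mathbb{T}$. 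As the towers are affine over $\M^{\ord}$ and the corresponding squares over $F_\p$ are cartesian (the fibre of $F_\p^{\star}\mathfrak{IG}$ at a point parametrises trivialisations of $T_\p$ of the quotient, which $\pi_\p$ identifies with those of $\mathfrak{IG}$, and similarly for $\mathfrak{IG}^\vee$ via $\pi_\p^\vee$), flat base change and passage to the $(\kappa,\kappa')$-isotypic part yield a canonical isomorphism $F_\p^{\star}\Omega^{(\kappa,\kappa')}\xrightarrow{\sim}\Omega^{(\kappa,\kappa')}$ on $\M^{\ord}$; extending scalars along $R[[T^{2}]]\to\Lambda$ gives $\rho\colon F_\p^{\star}\OG\xrightarrow{\sim}\OG$ on $\M^{\ord}$.

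To descend $\rho$ to $\X^{\ord}$ I would check it intertwines the $\Oo_{F,+}^{\times}$-descent data. The morphism $F_\p$ is $\Delta(K)$-equivariant (this is what made it descend to $\X^{\ord}$ in the first place), and on the functor of points replacing the auxiliary isomorphism $\theta_{\cc}$ by $\epsilon\theta_{\cc}$ changes the output by the action of $\epsilon\in\Oo_{F,+}^{\times}$; moreover $\pi_{\p}$ and $\pi_{\p}^{\vee}$ commute with the rescaling of the polarisation defining the unit action on $\mathcal{T},\mathcal{T}'$ and on the Igusa towers in $\S$\ref{secdescentigusa}, so the twist $d$ there is preserved. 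Hence $\rho$ descends to a well defined $F_\p\colon F_\p^{\star}\OG\to\OG$ on $\X^{\ord}$; concretely it sends a section $g$ to the rule $(A,\iota,\lambda,\alpha,\phi,\psi_1,\psi_2)\mapsto g(A/H_{\p},\iota',\lambda',\alpha',\phi',\psi_1',\psi_2')$, and one verifies this rule satisfies the torus-equivariance ($\pi_{\p},\pi_{\p}^{\vee}$ being $\Oo_F$-linear) and the unit-invariance (the unit action commuting with $F_\p$).

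For $U_\p$: as $F_\p$ is finite flat of degree $p$, the projection formula gives $(F_\p)_{\star}F_\p^{\star}\OG\simeq(F_\p)_{\star}\Oo_{\X^{\ord}}\otimes_{\Oo_{\X^{\ord}}}\OG$, and by Lemma~\ref{lemmatrace} the trace $\tr_{F_\p}$ is divisible by $p$, so $\tfrac{1}{p}\tr_{F_\p}\colon(F_\p)_{\star}\Oo_{\X^{\ord}}\to\Oo_{\X^{\ord}}$ is a well defined $\Oo_{\X^{\ord}}$-linear map. I then set $U_\p$ to be the composite
\[
(F_\p)_{\star}\OG \xrightarrow{(F_\p)_{\star}(\rho^{-1})} (F_\p)_{\star}F_\p^{\star}\OG \xrightarrow{\ \simeq\ } (F_\p)_{\star}\Oo_{\X^{\ord}}\otimes_{\Oo_{\X^{\ord}}}\OG \xrightarrow{\tfrac{1}{p}\tr_{F_\p}\otimes\,\mathrm{id}} \OG .
\]
Every arrow is defined over $\X^{\ord}$, so $U_\p$ is. The only place optimal integrality could fail is the division by $p$, which is precisely what Lemma~\ref{lemmatrace} supplies; I therefore expect the one genuinely non-formal step to be the second paragraph — checking that the Igusa-tower morphism respects the unit-descent data, i.e.\ its compatibility with the square-root twist $d$ relating $\M^{\ord}$ and $\X^{\ord}$.
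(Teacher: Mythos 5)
Your proposal is correct and follows essentially the same route as the paper: the key point in both is that the partial Frobenius lifts to the Igusa towers (and their duals) so that $F_{\p}\times\pi_{\mathfrak{IG}_{\cc}}$ identifies $\mathfrak{IG}_{\cc}$ with the fibre product, whence the pullback map on $\Omega^{(\kappa,\kappa')}$ and, via Lemma \ref{lemmatrace} transported along this cartesian identification, the $p$-divisibility making $U_{\p}=\tfrac{1}{p}\tr_{F_{\p}}$ integral, followed by $\Delta(K)$-equivariance (and unit-twist compatibility) to descend to $\X^{\ord}$. Your formulation of $U_{\p}$ through $\rho^{-1}$ and the projection formula coincides with the paper's trace of the lifted morphism on the Igusa towers, so the difference is only cosmetic.
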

\begin{proof}
We temporarily denote by $\mathfrak{IG}$ the partial Igusa tower (parametrising rigidifications for the étale $\p$-adic Tate module) and we denote with a subscript the tower above the component $\M^{\cc,\ord}$. First we notice that the canonical map $F_{\p}\times \pi_{\mathfrak{IG}_{\cc}}: \mathfrak{IG}_{\cc}\to \mathfrak{IG}_{\cc'} \times_{\M^{\cc',\ord}}\M^{\cc,\ord}$ obtained by the following diagram
\[
\begin{tikzcd}[scale=0.5]
\mathfrak{IG}_{\cc}\arrow[bend left]{drr}{\pi_{\mathfrak{IG}_{\cc}}}
\arrow[bend right]{ddr}[swap]{F_{\p}}
\arrow[dotted]{dr}[description]{F_{\p}\times \pi_{\mathfrak{IG}_{\cc}}} & & \\
& \mathfrak{IG}_{\cc'} \times_{\M^{\cc',\ord}}\M^{\cc,\ord} \arrow{r}{} \arrow{d}{} &\M^{\cc,\ord}\arrow{d}{F_{\p}} \\
& \mathfrak{IG}_{\cc'}  \arrow{r}{\pi_{\mathfrak{IG}_{\cc'}}} & \M^{\cc',\ord}
\end{tikzcd}
\]
is an isomorphism. The inverse morphism is given by sending $$[ (A/H_{\p},\iota',\lambda', \alpha'_{K^p},\varphi:\Z_p\simeq T_{\p}(A/H_{\p})^{\et}),(A,\iota,\lambda, \alpha_{K^p}) ] \mapsto (A,\iota,\lambda, \alpha_{K^p},\tilde{\pi}_{\p}^{-1}\circ\varphi),$$ where $\tilde{\pi}_{\p}:T_{\p}(A)^{\et}\simeq T_{\p}(A/H_{\p})^{\et}$ is the isomorphism induced by the isogeny $\pi_{\p}$.
We apply Lemma \ref{lemmatrace} and the fact that the map $F_{\p}\times \pi_{\mathfrak{IG}_{\cc}}$ is an isomorphism to obtain
$\tr_{F_{\p}}((F_{\p})_{\star}\Oo_{\mathfrak{IG}_{\cc}})\subset p \Oo_{\mathfrak{IG}_{\cc'}}$ and
$$\tr_{F_{\p}}((F_{\p})_{\star}(\pi_{\mathfrak{IG}_{\cc}})_{\star}\Oo_{\mathfrak{IG}_{\cc}})=(\pi_{\mathfrak{IG}_{\cc'}})_{\star}(\tr_{F_{\p}}((F_{\p})_{\star}\Oo_{\mathfrak{IG}_{\cc}}))\subset p (\pi_{\mathfrak{IG}_{\cc'}})_{\star} \Oo_{\mathfrak{IG}_{\cc'}}.$$
Patching together the corresponding maps, we have the natural pullback map $F_{\p}:(F_{\p})^{\star}\pi_{\star}\Oo_{\mathfrak{IG}}\to \pi_{\star}\Oo_{\mathfrak{IG}}$ and $\tfrac{1}{p}\tr_{F_{\p}}:(F_{\p})_{\star}\pi_{\star}\Oo_{\mathfrak{IG}}\to \pi_{\star}\Oo_{\mathfrak{IG}}$, which are well defined up to units, equivariant by the action of $\Delta(K)$ and $(\Z_p)^{\times}$-invariant. 

Similarly, if we still denote by $F_{\p}$ the map induced by the partial Frobenius on the dual Igusa towers $F_{\p}:\mathfrak{IG}^{\vee}_{\cc}\to \mathfrak{IG}^{\vee}_{\cc'}$, we have again that the canonical map $F_{\p}\times \pi_{\mathfrak{IG}^{\vee}_{\cc}}: \mathfrak{IG}^{\vee}_{\cc}\to \mathfrak{IG}^{\vee}_{\cc'} \times_{\M^{\cc',\ord}}\M^{\cc,\ord}$ is an isomorphism. Proceeding as above, we obtain the pullback map $F_{\p}:(F_{\p})^{\star}(\pi_{\mathfrak{IG}\times \mathfrak{IG}^{\vee}})_{\star}\Oo_{\mathfrak{IG}\times \mathfrak{IG}^{\vee}}\to (\pi_{\mathfrak{IG}\times \mathfrak{IG}^{\vee}})_{\star}\Oo_{\mathfrak{IG}\times \mathfrak{IG}^{\vee}}$ and $\tfrac{1}{p}\tr_{F_{\p}}:(F_{\p})_{\star}(\pi_{\mathfrak{IG}\times \mathfrak{IG}^{\vee}})_{\star}\Oo_{\mathfrak{IG}\times \mathfrak{IG}^{\vee}}\to (\pi_{\mathfrak{IG}\times \mathfrak{IG}^{\vee}})_{\star}\Oo_{\mathfrak{IG}\times \mathfrak{IG}^{\vee}}$, which are well defined up to units, equivariant by the action of $\Delta(K)$ and $(\Z_p)^{\times}$-invariant. 

We can then consider the full Igusa towers, where the partial Frobenius on $\M^{\ord}$ lifts to $F_{\p}$ on $\mathfrak{IG},\mathfrak{IG}^{\vee}$ and to isomorphisms on all the factors of the Igusa towers for $\p'\neq \p$. Tensoring with $R[[T^2]]$ and taking $T^2$-invariants we obtain maps of sheaves over $\M^{\ord}$
$$F_{\p}:(F_{\p})^{\star}\Omega^{(\kappa,\kappa')}\to \Omega^{(\kappa,\kappa')} \text{  and  }U_{\p}:=\tfrac{1}{p}\tr_{F_{\p}}:(F_{\p})_{\star}\Omega^{(\kappa,\kappa')}\to \Omega^{(\kappa,\kappa')}$$
again well defined up to units and equivariant by the action of $\Delta(K)$. We therefore obtain maps $F_{\p}, U_{\p}$ for the shaves over $\X^{\ord}$ as claimed.
\end{proof}

We now look more closely to the specialisation of these maps at classical weight $\underline{k},\underline{n}\in \Z^n$, under the isomorphism \eqref{eq:specialiso}. 

Consider the universal isogeny $\pi_{\p}:\mathcal{A^{\cc}}\to\mathcal{A^{\cc'}}/H_{\p}$. The pullback gives an $\Oo_F$-equivariant map $\pi_{\p}^{\star}: (F_{\p})^{\star}\myom \to \myom$ of sheaves over $\M^{\ord}$. Similarly we obtain a map $\pi_{\p}^{\star}: (F_{\p})^{\star}(\wedge^2\hh^1) \to \wedge^2\hh^1$ and we therefore obtain, for any $\underline{k}\in \Z^n,w\in \Z$, a map
\begin{equation}\label{eqdiffFp}
\pi_{\p}^{\star}: (F_{\p})^{\star}\myom^{(2\underline{k},2w)} \to \myom^{(2\underline{k},2w)},
\end{equation}
The dual isogeny induces an isogeny $\pi_{\p}^D: \mathcal{A^{\cc'}}/H_{\p}\otimes \cc'\to \mathcal{A^{\cc}}\otimes \cc$ using the prime to $p$ polaristaions. We therefore find, being $\cc,\cc'$ all coprime to $p$, an $\Oo_F$-equivariant map $(\pi_{\p}^{D})^{\star}: \myom^{(2\underline{k},2w)} \to (F_{\p})^{\star}\myom^{(2\underline{k},2w)}$. We can then construct a map
\begin{equation}\label{eqdiffUp}
U_{\pi_{\p}^D}:(F_{\p})_{\star}\myom^{(2\underline{k},2w)} \xrightarrow{(\pi_{\p}^{D})^{\star}} (F_{\p})_{\star}(F_{\p})^{\star}\myom^{(2\underline{k},2w)}\xrightarrow{\tfrac{1}{p}\tr_{F_{\p}}}\myom^{(2\underline{k},2w))}.
\end{equation}
\begin{lemma}\label{lemmaspecFU}
Let $(\underline{k},w)\in \Z^{n+1}$ and let $n_{\p}=w-k_{\p}$. The map $F_{\p}$ specialised to weight $(\underline{k},w)$ coincides with $p^{-2k_{\p}-n_{\p}}(\pi_{\p}^{\star})^{(\underline{k},\underline{n})}$. The map $U_{\p}$ specialised to weight $(\underline{k},w)$ coincides with $p^{-n_{\p}}U_{\pi_{\p}^D}$.
\end{lemma}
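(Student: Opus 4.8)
The plan is to prove both identities as equalities of morphisms of invertible sheaves over $\M^{\ord}$; they then descend to $\X^{\ord}$ by the equivariance recorded in the preceding lemma. For the statement about $F_\p$ I would first reduce to two \emph{atomic} weights. Both $(\pi_\p^\star)^{(\underline{k},\underline{n})}$ of \eqref{eqdiffFp} and the specialisation at $(\underline{k},\underline{n})$ of the map $F_\p$ of the preceding lemma are, by construction, tensor products over $\tau\in\Sigma_\infty$ of the analogous morphisms on the line bundles $\omega_\tau^{k_\tau}$ and $(\wedge^2\hh^1_\tau)^{n_\tau}$; for $\tau\neq\tau_\p$ the isogeny $\pi_\p$ (kernel killed by $\p$) is an isomorphism, and $F_\p$ is an isomorphism on the corresponding factors of the Igusa towers, so those contributions agree identically. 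Using the lemma just proved together with Lemma \ref{lemmatildehat}, it therefore suffices to treat $\Omega^{(\kappa,\kappa')}\otimes R\simeq\omega_\p$ (weight with $k_\p=1$, all other entries $0$) and $\Omega^{(\kappa,\kappa')}\otimes R\simeq\wedge^2\hh^1_\p$ (weight with $n_\p=1$, all other entries $0$), showing $F_\p=p^{-1}\pi_\p^\star$ in each.

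The geometric heart of the matter — and the step I expect to be the main obstacle — is the behaviour of the Hodge--Tate trivialisations under $\pi_\p$. The canonical subgroup $H_\p=\mathcal{A}[\p]^\circ$ corresponds, under the rigidification of $\mathcal{A}[\p^\infty]^\circ$ with $\mu_{p^\infty}$ dual to the étale rigidification parametrised by $\mathfrak{IG}_\p$, to $\mu_p\subset\mu_{p^\infty}$; hence, via the rigidifications of $\mathcal{A}$ and $\mathcal{A}/H_\p$ matched by $F_\p$ on the Igusa towers, the restriction of $\pi_\p$ to the connected $\p$-divisible group is identified with the multiplication-by-$p$ isogeny $[p]\colon\mu_{p^\infty}\to\mu_{p^\infty}$. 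As $[p]^\star(dt/t)=p\,dt/t$ on $\mu_{p^\infty}$, this yields $\pi_\p^\star(\HT(\phi'))=p\cdot\HT(\phi)$ in $\omega_{\mathcal{A},\p}$, where $\phi'$ is the rigidification of $\mathcal{A}/H_\p$ induced from $\phi$. Dually, $\pi_\p^\vee$ has \emph{étale} kernel, hence is an isomorphism on connected $\p$-divisible groups and $\HT(\psi_2)$ receives no extra factor of $p$; since $\psi_1$ is again an étale rigidification it contributes a factor $p$ just as $\phi$ does, and combining the two through the map $s$ of \S\ref{sectiontorsors} (after fixing a generator of $\mathfrak{d}^{-1}$, coprime to $p$) gives $\pi_\p^\star\big(s(\HT(\psi_1'),\HT(\psi_2'))\big)=p\cdot s(\HT(\psi_1),\HT(\psi_2))$ in $\wedge^2\hh^1_{\dr}(\mathcal{A})_\p$, in agreement with the fact from the proof of Proposition \ref{propTpnorm} that $\pi_\p^\star$ on $\wedge^2\hh^1_\p$ is multiplication by $p$. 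The work here is to pin these equalities down \emph{on the nose}, not just up to a unit, which forces one to track the compatibility of the polarisations $\lambda,\lambda'$ (diagram \eqref{dualiso}) and of the dual rigidifications through Cartier/Weil duality.

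Granting this, the $F_\p$ identity follows quickly. Unwinding the definition of $F_\p$ on $\Omega^{(\kappa,\kappa')}$ as the morphism induced by the Igusa-tower isomorphism, a local section $g$ of $\Omega^{(\kappa,\kappa')}\simeq\omega_\p$ near $\mathcal{A}/H_\p$ satisfies $F_\p(g)(\mathcal{A},\phi)=g(\mathcal{A}/H_\p,\phi')$; transporting $g$ to a rule $\tilde g$ in $\omega_\p$ via the Hodge--Tate isomorphism of Lemma \ref{lemmaclassical1} and inserting $\HT(\phi')=p\,(\pi_\p^\star)^{-1}\HT(\phi)$, the homogeneity condition (iii) of Definition \ref{defikatz} for the weight-one sheaf $\omega_\p$ produces the factor $p^{-1}$ and gives $\widetilde{F_\p(g)}=p^{-1}\pi_\p^\star(\tilde g)$. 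The case of $\wedge^2\hh^1_\p$ is identical with $(\psi_1,\psi_2)$ in place of $\phi$. Tensoring the atomic cases, and using that the $\tau\neq\tau_\p$ factors contribute isomorphisms, gives $F_\p^{(\underline{k},\underline{n})}=p^{-k_\p-n_\p}(\pi_\p^\star)^{(\underline{k},\underline{n})}$.

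For $U_\p$ the argument is then formal. The trace $\tr_{F_\p}$ entering $U_\p=\tfrac1p\tr_{F_\p}$ is the genuine trace of the finite locally free degree-$p$ morphism $F_\p$ on the Igusa towers, read on $\Omega^{(\kappa,\kappa')}$ through the canonical identification of $(F_\p)^\star\Omega^{(\kappa,\kappa')}$ with $\Omega^{(\kappa,\kappa')}$ — which is precisely the map $F_\p$ of the preceding lemma, hence specialises to $p^{-k_\p-n_\p}\pi_\p^\star$ by the first part. Since $\pi_\p^D\circ\pi_\p$ is the canonical isogeny with kernel $\mathcal{A}[\p]$, the computation of \eqref{eqdual} applies verbatim to give $\pi_\p^\star\circ(\pi_\p^D)^\star=p^{k_\p+2n_\p}\cdot\mathrm{id}$, so $(\pi_\p^\star)^{-1}=p^{-(k_\p+2n_\p)}(\pi_\p^D)^\star$. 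Substituting, the specialisation at $(\underline{k},\underline{n})$ of $\tr_{F_\p}$ on $\Omega^{(\kappa,\kappa')}$ equals $p^{-n_\p}\,\tr_{F_\p}\circ(F_\p)_\star\big((\pi_\p^D)^\star\big)$, which by \eqref{eqdiffUp} is $p^{1-n_\p}U_{\pi_\p^D}$; dividing by $p$ yields $U_\p^{(\underline{k},\underline{n})}=p^{-n_\p}U_{\pi_\p^D}$.
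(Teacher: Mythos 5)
Your proposal is correct and follows essentially the same route as the paper: the substance in both cases is the compatibility of the Hodge--Tate trivialisations with $F_\p$ obtained from Cartier duality and the polarisation diagram \eqref{dualiso} (the paper phrases it as $\HT(\phi')=(\pi_\p^D)^{\star}\HT(\phi)$, equivalent to your $\pi_\p^{\star}\HT(\phi')=p\,\HT(\phi)$ via $\pi_\p^{\star}\circ(\pi_\p^D)^{\star}=p$), together with the identity $(\pi_\p^{\vee})^{\star}\circ((\pi_\p^D)^{\vee})^{\star}=p$ on the $\wedge^2\hh^1_\p$-factor. The only differences — reducing to the atomic weights $\omega_\p$ and $\wedge^2\hh^1_\p$ and deducing the $U_\p$ formula formally from the $F_\p$ one through $\pi_\p^{\star}\circ(\pi_\p^D)^{\star}=p^{k_\p+2n_\p}$, where the paper instead redoes the Hodge-filtration decomposition for the trace map — are organisational rather than substantive.
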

\begin{proof} Let $m\leq n$ and recall $\mathfrak{IG}_{\p}^{m,n}=\text{Isom}_{\mathcal{M}^{\ord}_m}(\Z/p^n, \mathcal{A}[\p^n]^{\et})$, $\mathcal{T}_{\p} =\text{Isom}_{\mathcal{M}_m}(\Oo_{\mathcal{M}_m}, (e^*\Omega^1_{\mathcal{A}})_{\p})$. where $\mathcal{T}_{\p}$ is the $\p$-component of the torsor $\mathcal{T}$. We have the following commutative diagram
\[
\begin{tikzcd}[scale=0.5, every node/.style={scale=0.5}]
\mathfrak{IG}_{\p}^{m,n}\arrow{r}{F_{\p}}\arrow{d}{\HT_{m,n}} &\mathfrak{IG}_{\p}^{m,n}\arrow{d}{\HT_{m,n}}\\
\mathcal{T}_{\p|\mathcal{M}^{\ord}_m}\arrow{r}{(\pi_{\p}^D)^{\star}} &\mathcal{T}_{\p|\mathcal{M}^{\ord}_m}
\end{tikzcd}
\begin{tikzcd}[scale=0.5, every arrow/.style={draw,mapsto}]
(A,\iota,\lambda, \alpha_{K^p}, \varphi_n) \arrow{r}{F_{\p}}\arrow{d}{\HT_{m,n}} &(A/H_{\p},\iota',\lambda', \alpha'_{K^p}, {\pi}_{\p}\circ\varphi_n)\arrow{d}{\HT_{m,n}}\\
(A,\iota,\lambda, \alpha_{K^p}, (\varphi_n^D)^{*}) \arrow{r}{(\pi_{\p}^D)^{\star}} &(A/H_{\p},\iota',\lambda', \alpha'_{K^p}, (\pi_{\p}^D)^{\star}\circ(\varphi_n^D)^{\star}))
\end{tikzcd}
\]
where $(\pi_{\p}^D)^{\star}$ is the map induced by the ($\p$-component) of the pullbacks $\Omega^1_A\to\Omega^1_{A/H_\p}$ of the dual maps defined by the diagram \eqref{dualiso}; note that we use the fact that $\cc,\cc'$ are coprime to $\p$. Moreover, to simplify the notation, we denoted by $(\varphi_n^D)^*$ the image of $\varphi_n:\Z/p^n\simeq A[\p^n]^{\et}$ under the map defined in \eqref{HTmn} (obtained by fixing a basis of $\Omega^1_{\mu_{p^n}}$).
Similarly, recall $(\mathfrak{IG}^{\vee}_{\p})^{m,n}=\text{Isom}_{\mathcal{M}^{\ord}_m}(\mathcal{A}^{\vee}[\p^n]^{\circ},\mu_{p^n})$, $\mathcal{T}'_{\p}=\text{Isom}_{\mathcal{M}_m}(\Oo_{\mathcal{M}_m}, ((e^*\Omega^1_{\mathcal{A}^{\vee}})_{\p})^{\vee})$. We have the commutative diagram
\[
\begin{tikzcd}[scale=0.5]
(\mathfrak{IG}^{\vee}_{\p})^{m,n} \arrow{r}{F_{\p}}\arrow{d}{\HT_{m,n}} &(\mathfrak{IG}^{\vee}_{\p})^{m,n}\arrow{d}{\HT_{m,n}}\\
\mathcal{T}'_{\p|\mathcal{M}^{\ord}_m} \arrow{r}{((\pi_{\p}^{\vee})^{\star})^{-1}} &\mathcal{T}'_{\p|\mathcal{M}^{\ord}_m}
\end{tikzcd}
\begin{tikzcd}[scale=0.5, every arrow/.style={draw,mapsto}]
(A,\iota,\lambda, \alpha_{K^p}, \varphi_n) \arrow{r}{F_{\p}}\arrow{d}{\HT_{m,n}} &(A/H_{\p},\iota',\lambda', \alpha'_{K^p}, \varphi_n\circ \pi_{\p}^{\vee})\arrow{d}{\HT_{m,n}}\\
(A,\iota,\lambda, \alpha_{K^p}, (\varphi_n^*)^{-1}) \arrow{r}{((\pi_{\p}^{\vee})^{\star})^{-1}} &(A/H_{\p},\iota',\lambda', \alpha'_{K^p}, ((\varphi_n\circ \pi_{\p}^{\vee})^*)^{-1})
\end{tikzcd}
\]
where $\varphi_n^*$ denotes the isomorphism $\Oo_{\mathcal{M}^{\ord}_m}\to (e^*\Omega^1_{A^{\vee}})_{\p}$ and $(\varphi_n^*)^{-1}$ denotes the isomorphism $\Oo_{\mathcal{M}^{\ord}_m}\to ((e^*\Omega^1_{A^{\vee}})_{\p})^{\vee}$ obtained as in \eqref{HTmndual}. Moreover the bottom map is given by pre-composing with $((\pi_{\p}^{\vee})^{\star})^{-1}:((e^*\Omega^1_{A^{\vee}})_{\p})^{\vee}\to((e^*\Omega^1_{(A/H_{\p})^{\vee}})_{\p})^{\vee}$. Passing to the limits, we obtain commutative diagrams 
\begin{equation}\label{diagrams}
\begin{tikzcd}
\mathfrak{IG} \arrow{r}{F_{\p}}\arrow{d}{\HT} &\mathfrak{IG}\arrow{d}{\HT}\\
\mathcal{T}_{|\mathcal{M}^{\ord}} \arrow{r}{(\pi_{\p}^{D})^{\star}} &\mathcal{T}_{|\mathcal{M}^{\ord}}
\end{tikzcd}
\begin{tikzcd}
\mathfrak{IG}^{\vee} \arrow{r}{F_{\p}}\arrow{d}{\HT} &\mathfrak{IG}^{\vee}_{\p}\arrow{d}{\HT}\\
\mathcal{T}'_{|\mathcal{M}^{\ord}} \arrow{r}{((\pi_{\p}^{\vee})^{\star})^{-1}} &\mathcal{T}'_{|\mathcal{M}^{\ord}},
\end{tikzcd}
\end{equation}
where we replaced the partial Igusa towers and partial $\mathbb{G}_m$-torsors with the full Igusa towers $\mathfrak{IG},\mathfrak{IG}^{\vee}$ and the $\mathbb{G}_m\otimes \Oo_F$-torsors $\mathcal{T},\mathcal{T}'$, letting $F_{\p}$ act as the identity on the components $\mathfrak{IG}_{\mathfrak{q}},\mathfrak{IG}_{\mathfrak{q}}^{\vee}$ for $\mathfrak{q}\neq \p$ and exploiting the fact that being $\pi_{\p}$ a $\p$-isogeny, the bottom pullback maps are isomorphisms on the components $\mathcal{T}_{\mathfrak{q}},\mathcal{T}'_{\mathfrak{q}}$ for ${\mathfrak{q}}\neq \p$. We find the corresponding commutative diagrams on the structural shaves and taking the $(2\underline{k},\underline{n})$-components we obtain that the pullback map $F_{\p}: (F_{\p})^{\star}\myom^{(\underline{k},\underline{n})} \to \myom^{(\underline{k},\underline{n})}$ is therefore given on the $\p$-component by $((\pi_{\p}^D)^{\star})^{-1} \otimes [((\pi_{\p}^D)^{\star})^{-1}\otimes(\pi_{\p}^{\vee})^{\star}]$, where we are identifying, by (\ref{hodgeexseq}) and the fact that the polarizations are prime to $\p$, $\wedge^2\mathcal{H}^1_{\dr}(\mathcal{A}/H_{\p})_{\p}\simeq (e^*\Omega^1_{\mathcal{A}})_{\p}\otimes (e^*\Omega^1_{\mathcal{A}^{\vee}})_{\p}^{\vee}$. Under this identification the natural pullback map $\pi_{\p}^{\star}$ decomposes as
\[
\begin{tikzcd}[column sep=5em, row sep=1em]
\wedge^2\mathcal{H}^1_{\dr}(\mathcal{A}/H_{\p})_{\p} \arrow{r}{\pi_{\p}^{\star}}\arrow{d}{\simeq}&\wedge^2\mathcal{H}^1_{\dr}(\mathcal{A})_{\p}\arrow{d}{\simeq}\\
(e^*\Omega^1_{\mathcal{A}/H_{\p}})_{\p}\otimes \operatorname{Lie}((\mathcal{A}/H_{\p})^{\vee})_{\p}\arrow{r}{\pi_{\p}^{\star}\otimes \operatorname{Lie}(\pi_{\p}^{\vee})} &(e^*\Omega^1_{\mathcal{A}})_{\p}\otimes \operatorname{Lie}(\mathcal{A}^{\vee})_{\p}
\end{tikzcd}
\]
and, under the isomorphism \cite[(1.0.13)]{katzpadic}, we can write the second component of the map as $\operatorname{Lie}(\pi_{\p}^{\vee})=(\pi_{\p}^{\vee})^{\star}:(e^*\Omega^1_{(\mathcal{A}/H_{\p})^{\vee}})^{\vee}\simeq \operatorname{Lie}((\mathcal{A}/H_{\p})^{\vee})\to (e^*\Omega^1_{\mathcal{A}^{\vee}})^{\vee}\simeq \operatorname{Lie}(\mathcal{A}^{\vee})$, where here $(\pi_{\p}^{\vee})^{\star}$ denotes the map obtained composing with $(\pi_{\p}^{\vee})^{\star}: e^*\Omega^1_{\mathcal{A}^{\vee}}\to e^*\Omega^1_{(\mathcal{A}/H_{\p})^{\vee}}$. So, since the composition $\pi_{\p}^{\star}\circ (\pi_{\p}^D)^{\star}$ is given by multiplication by $p$ on $(e^*\Omega^1_{\mathcal{A}})_{\p}$ and is an isomorphism on the $\mathfrak{q}\neq \p$-components, we obtain

$$F_{\p}=((\pi_{\p}^D)^{\star})^{-1} \otimes [((\pi_{\p}^D)^{\star})^{-1}\otimes (\pi_{\p}^{\vee})^{\star}]=p^{-2k_{\p}-n_{\p}}\pi_{\p}^{\star}.$$

On the other hand, again by the commutativity of the diagrams \eqref{diagrams}, the specialisation of the trace map is given by the composition
$(F_{\p})_{\star}\myom^{(2\underline{k},\underline{n})} \to (F_{\p})_{\star}(F_{\p})^{\star}\myom^{(2\underline{k},\underline{n})}\xrightarrow{1/p\tr_{F_{\p}}}\myom^{(2\underline{k},\underline{n})},$
where the first map is given on the $\p$-component by $(\pi_{\p}^D)^{\star} \otimes [(\pi_{\p}^D)^{\star}\otimes((\pi_{\p}^{\vee})^{\star})^{-1}]$. As before the natural pullback map $(\pi_{\p}^D)^{\star}$ decomposes as
\[
\begin{tikzcd}[column sep=5em, row sep=1em]
\wedge^2\mathcal{H}^1_{\dr}(\mathcal{A})_{\p} \arrow{r}{(\pi_{\p}^D)^{\star}}\arrow{d}{\simeq}&\wedge^2\mathcal{H}^1_{\dr}(\mathcal{A}/H_{\p})_{\p}\arrow{d}{\simeq}\\
(e^*\Omega^1_{\mathcal{A}})_{\p}\otimes \operatorname{Lie}(\mathcal{A}^{\vee})_{\p}\arrow{r}{(\pi_{\p}^D)^{\star}\otimes \operatorname{Lie}((\pi_{\p}^D)^{\vee})} &(e^*\Omega^1_{\mathcal{A}/H_{\p}})_{\p}\otimes \operatorname{Lie}((\mathcal{A}/H_{\p})^{\vee})_{\p},
\end{tikzcd}
\]
where $\operatorname{Lie}((\pi_{\p}^D)^{\vee})=((\pi_{\p}^D)^{\vee})^{\star}:(e^*\Omega^1_{\mathcal{A}^{\vee}})^{\vee}\simeq \operatorname{Lie}(\mathcal{A}^{\vee})\to (e^*\Omega^1_{(\mathcal{A}/H_{\p})^{\vee}})^{\vee}\simeq \operatorname{Lie}((\mathcal{A}/H_{\p})^{\vee})$. The composition $(\pi_{\p}^{\vee})^{\star}\circ ((\pi_{\p}^D)^{\vee})^{\star}$ is given by multiplication by $p$ on $(e^*\Omega^1_{\mathcal{A}^{\vee}})^{\vee}$, so we obtain
$$U_{\p}=\tfrac{1}{p}\tr_{F_{\p}}\circ(\pi_{\p}^D)^{\star} \otimes [(\pi_{\p}^D)^{\star}\otimes((\pi_{\p}^{\vee})^{\star})^{-1}]=p^{-n_{\p}}U_{\pi_{\p}^D}.$$
These maps are all invariant by the action of the units, so we obtain the desired statement for the map between the sheaves over $\X^{\ord}$.
\end{proof}

We now compare these operators with the operator $T_{\p}$ constructed in $\S$ \ref{secHecke}. As we did for $\mathcal{M}$ and $X$, the moduli space and Shimura variety of level $K$ such that $K_p=G(\Z_p)$, we can consider $\M_0(\p)$ and $\mathfrak{X}_0(\p)$ the formal completion of $\mathcal{M}_0(\p)$ and of $X_0(\p)$ along their special fibres. The ordinary locus $\M_0(\p)^{\ord}$ of $\M_0(\p)$ is the disjoint union of the loci $\M_0(\p)^{\et}$ and $\M_0(\p)^{m}$, where the universal $\p$-isogeny has respectively étale and multiplicative kernel. Passing to the quotient by the action of the units, we similarly let $\X_0(\p)^{\et}, \X_0(\p)^m$, noting that the kernel of the isogeny is independent on the polarisations. By construction we have that the two projection maps $p_1,p_2$ are either an isomorphism or can be identified with $F_{\p}$ when restricted to $\X_0(\p)^{\et}, \X_0(\p)^m$, more precisely
\begin{equation}\label{diagramsp1p2}
\begin{tikzcd}[column sep=1.1em, row sep=1em]
&\X_0(\p)^{\et}\arrow{dr}{p_2}[swap]{\simeq} \arrow{dl}[left]{p_1=F_{\p}}\\
\X^{\ord} &&\X^{\ord}
\end{tikzcd}
\begin{tikzcd}[column sep=1.1em, row sep=1em]
&\X_0(\p)^{m}\arrow{dr}[right]{p_2=F_{\p}} \arrow{dl}{\simeq}[swap]{p_1}\\
\X^{\ord} &&\X^{\ord}.
\end{tikzcd}
\end{equation}
Using the previous lemma, we can prove the following
\begin{lemma}\label{lemmareltp}
Let $F_{\p}$ and $U_{\p}$ be the specialisations of the operators $F_{\p}$ and $U_{\p}$ in weight $(\underline{k},w)$. We have the following equalities
\begin{itemize}
\item[(i)] $T_{\p}=p^{2k_{\p}-1}F_{\p}+U_{\p}$ when $2k_{\p}\geq 1$;
\item[(ii)] $T_{\p}=F_{\p}+p^{1-2k_{\p}}U_{\p}$ when $2k_{\p}< 1$.
\end{itemize}
\end{lemma}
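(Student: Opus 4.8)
The plan is to restrict the whole situation to the ordinary locus, where the correspondence $\X_0(\p)^{\ord}=p_1^{-1}(\X^{\ord})=p_2^{-1}(\X^{\ord})$ breaks up, by \eqref{diagramsp1p2}, as the disjoint union of the open-and-closed pieces $\X_0(\p)^{\et}$ and $\X_0(\p)^{m}$. Since these are open and closed, the restriction of the Hecke correspondence $T_\p$ to $\RG(\X^{\ord},\myom^{(\underline{k},w)})$ (and to the cuspidal variant) splits as a sum $T_\p=T_\p^{\et}+T_\p^{m}$ of the contributions supported on the two strata. I will compute each contribution on the level of the \emph{naive} correspondence $\tilde T_{\p}$ of \S\ref{secHecke} and then reassemble them with the normalisation factor $p^{-\inf\{n_\p+1,\,k_\p+n_\p\}}$, where $n_\p=\tfrac{w-k_\p}{2}$; the fact that this infimum is $n_\p+1$ when $k_\p\geq1$ and $k_\p+n_\p$ when $k_\p<1$ is exactly what will produce the two cases (i) and (ii).

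On the multiplicative stratum, \eqref{diagramsp1p2} says $p_1$ is an isomorphism and $p_2=F_\p$. Transporting along the isomorphism $p_1$, the universal $\p$-isogeny becomes the isogeny $\pi_\p:\mathcal{A}\to\mathcal{A}/H_\p$ used to define $F_\p$, up to the identifications coming from the universal polarisations — which involve only ideals coprime to $p$ and therefore contribute $p$-adic units. Hence the differential part of $\tilde T_\p$ on this stratum is $\pi_\p^\star:F_\p^\star\myom^{(\underline{k},w)}\to\myom^{(\underline{k},w)}$, while $\tr_{p_1}$ is an isomorphism; so $\tilde T_\p^{m}$ acts as $\pi_\p^\star$ composed with the pullback along $F_\p$, which by Lemma \ref{lemmaspecFU} is $p^{k_\p+n_\p}$ times the specialised operator $F_\p$. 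On the étale stratum, \eqref{diagramsp1p2} gives instead $p_2$ an isomorphism and $p_1=F_\p$; transporting along $p_2$, the universal isogeny becomes the isogeny $\pi_\p^D:\mathcal{A}/H_\p\to\mathcal{A}$ of \eqref{dualiso} (again up to coprime-to-$p$ units), so the differential part of $\tilde T_\p$ here is $(\pi_\p^D)^\star$ — an isomorphism, $\pi_\p^D$ being étale — and $\tr_{p_1}$ is the genuine trace $\tr_{F_\p}$ of the degree-$p$ map $F_\p$. Comparing with the definition of $U_{\pi_\p^D}$, which uses $\tfrac1p\tr_{F_\p}$, this identifies $\tilde T_\p^{\et}=p\cdot U_{\pi_\p^D}$, and $U_{\pi_\p^D}=p^{n_\p}U_\p$ again by Lemma \ref{lemmaspecFU}.

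Assembling, $\tilde T_\p=p^{n_\p+1}U_\p+p^{k_\p+n_\p}F_\p$ as endomorphisms of $\RG(\X^{\ord},\myom^{(\underline{k},w)})$. Multiplying by $p^{-n_\p-1}$ in the case $k_\p\geq1$ gives $T_\p=U_\p+p^{k_\p-1}F_\p$, and multiplying by $p^{-k_\p-n_\p}$ in the case $k_\p<1$ gives $T_\p=F_\p+p^{1-k_\p}U_\p$, which are exactly (i) and (ii). The same computation applies verbatim to the cuspidal cohomology, since the strata are disjoint from the boundary (the partial Hasse loci miss $D$).

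The only genuinely non-formal step is the identification, on each stratum, of the universal isogeny with $\pi_\p$ (resp.\ $\pi_\p^D$) together with the bookkeeping of the polarisation twists $\theta_\cc$; this is essentially a repackaging of the moduli description of $F_\p$ and of \eqref{diagramsp1p2}, but one must check that the isomorphisms $p_1$, $p_2$ of \eqref{diagramsp1p2} intertwine the relevant pullback maps on $\myom$ and $\wedge^2\mathcal{H}^1$, and that all extraneous factors are $p$-adic units. Once that is in place, the remainder is the arithmetic of the normalising exponents, which is dictated by Proposition \ref{propTpnorm} and the relation \eqref{eqdual}. I expect this intertwining check, rather than the exponent arithmetic, to be the main obstacle.
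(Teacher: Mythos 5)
Your proposal is correct and follows essentially the same route as the paper: restrict the naive correspondence to the ordinary locus, split it along $\X_0(\p)^{\ord}=\X_0(\p)^{\et}\sqcup\X_0(\p)^{m}$ using \eqref{diagramsp1p2}, identify the two pieces with $p^{n_\p+1}U_\p$ and $p^{k_\p+n_\p}F_\p$ via Lemma \ref{lemmaspecFU}, and then apply the normalisation of $T_\p$. The only slip is the parenthetical claim that $(\pi_\p^D)^\star$ is an isomorphism on $\myom^{(\underline{k},w)}$ (it is one on $\myom$, but on the $\wedge^2\mathcal{H}^1_\p$ factors it is $p$ times an isomorphism); this remark is never used, so the argument is unaffected.
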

\begin{proof}
We denote by $T_{\p}^{naive,\et}, T_{\p}^{naive,m}$ the projection of the restriction of the correspondence $T_{\p}^{naive}$ over $\X_0(\p)^{\ord}$ on the component $\X_0(\p)^{\et}$ and $\X_0(\p)^m$ respectively. They are obtained via the pullback induced by the isogeny $\pi_{\p}^D$ and $\pi_{\p}$ respectively. By the above observations and Lemma \ref{lemmaspecFU}, we have that $F_{\p}=p^{-2k_{\p}-n_{\p}}T_{\p}^{naive,m}$ and $U_{\p}=p^{-n_{\p}-1}T_{\p}^{naive,\et}$. Our definition of the normalised operator $T_{\p}$ gives
\begin{displaymath}
T_{\p}=\begin{cases}
p^{-n_{\p}-1}(p^{2k_{\p}+n_{\p}}F_{\p}+p^{n_{\p}+1}U_{\p})=p^{2k_{\p}-1}F_{\p}+U_{\p} &\text{if }2k_{\p}\geq 1 \\
p^{-n_{\p}-2k_{\p}}(p^{2k_{\p}+n_{\p}} F_{\p}+p^{n_{\p}+1}U_{\p})=F_{\p}+p^{1-2k_{\p}}U_{\p} &\text{if }2k_{\p}< 1.
\end{cases}
\end{displaymath} 
\end{proof}
Note that, in particular, this lemma tells us that we have $T_{\p}\equiv U_{\p}\mod \wp$ when $2k_{\p}> 1$ and $T_{\p}\equiv F_{\p}\mod \wp$ when $2k_{\p}< 1$. This will be crucial in the next section to reduce the classicality result in characteristic zero to the classicality result we proved modulo $\wp$ in $\S$\ref{modpsection}. 

Finally, we have the following analogue of Proposition \ref{dualtp}.
\begin{lemma}\label{lemmaFpUpdual}
For any $J\subset \Sigma_{\infty}$, let $T_J= \prod_{\p\not\in J}U_{\p}\prod_{\p\in J}F_{\p}$. Then we have $D(T_J)=\langle p\rangle^{-1} T_{J^c}$.
\end{lemma}
\begin{proof}
Using the description of the specialisations of $F_{\p}$ and $U_{\p}$ given in Lemma \ref{lemmaspecFU} we can obtain the claimed result similarly as in the proof on Proposition \ref{dualtp} and Remark \ref{rmkallp}. Alternatively, this follows by the same proposition and the equalities of Lemma \ref{lemmareltp}.
\end{proof}
\subsubsection{Construction of $p$-adic families}
Recall that $\mathfrak{X}$ is the formal completion of $X$ along its special fibre and we denoted by $X_i=X\times_{\operatorname{Spec}R}\operatorname{Spec}(R/\wp^i)$ the reduction of $\mathfrak{X}$ modulo $\wp^i$ and by $X_i^{\ord}$ the ordinary locus of $X_i$. In order to define the desired $\Lambda$-modules, we need to extend over $\X$ suitable quotients of the sheaf $\Omega^{(\kappa_1,\kappa_2)}$, which is only defined over $\X^{\ord}$. We start by considering a general framework. 

Let $\F$ be any coherent sheaf defined over $X_i^{\ord}$. Let $\I$ a locally principal sheaf of ideals corresponding to the divisor $D_i=X_i\setminus X_i^{\ord}$, for example we could take $\I=\prod \I_{\tau}$, where $\I_{\tau}$ is a locally principal sheaf of ideals defined by a lift of the partial Hasse invariant $\tilde{h}_{\tau,i}$ (whose divisor is $D_{\tau,i}$). 

We consider an extension $\bar{\F}$ of $\F$ over $X_i$, which we can assume to be $\I$-torsion free, just by replacing $\bar{\F}$ by its quotient by the $\I$-torsion. Thanks to this, the multiplication map $\I^n \otimes_{\Oo_{X_i}} \bar{\F} \to \I^n\bar{\F}$ is an isomorpism for any $n\geq 0$ and we can make sense of the sheaves $\I^n \bar{\F}$ also for $n\leq 0$. A similar reasoning applies for $\I_\tau^n \bar{\F}$. We also have
{\begin{equation}\label{j_*}
j_{\star}\F=\ccolim_{n}\I^{-n}\bar{\F}
\end{equation}}
where $j$ denotes the inclusion $j:X_i^{\ord}\to X_i$. Let $J\subset \Sigma_{\infty}$, consider $\underline{m}=(m_{\tau})_{\tau \not\in J}\in \Z_{\geq 0}^{\#(\Sigma_{\infty}- J)}, \underline{n}=(n_{\tau})_{\tau \in J}\in \Z_{\geq 0}^{\#J}$ and let
\begin{displaymath}
\RG_i(\bar{\F})_{\underline{m},\underline{n}}=\RG(X_i, \prod_{\tau\not\in J}\I_\tau^{-m_{\tau}}\prod_{\tau\in J}\I_{\tau}^{n_{\tau}}\bar{\F}),
\end{displaymath}
\begin{displaymath}
\RG^{J}_i(\bar{\F})= \left(\limm_{n_{\tau}}\right)_{\tau\in J} \left(\colim_{m_{\tau}}\right)_{\tau\not\in J} \RG_i(\bar{\F})_{\underline{m},\underline{n}}.
\end{displaymath}
These complexes may a priori depend on the extension $\bar{\F}$ of $\F$. We now go back to the Igusa sheaf, quotient it by a certain $\Lambda$-ideal so that is a quasi coherent sheaf over $X_i^{\ord}$ and show that it has a locally finite action of the operator 
\begin{equation}\label{eqTJop}
T_J:= \prod_{\p\not\in J}U_{\p}\prod_{\p\in J}F_{\p}.
\end{equation}
We can then apply the idempotent $e(T_J)$ to $ \RG^{J}_i(\bar{\F})$ and we show that the obtained complex is independent on the choice of $\bar{\F}$.

Let ${\Lambda}=R[[(\Z_p^{\times})^{n+1}]]$. Recall that we defined the sheaf of ${\Lambda}\otimes \Oo_{\M^{\ord}}$-modules $\Omega^{(\kappa,\kappa')}$ in \eqref{eq:MIgusasheaf2}. We will now define truncated versions of this sheaf using $\mathfrak{IG}^{m,i},(\mathfrak{IG}^{\vee})^{m,i}$ the level $p^m$ Igusa towers on the reduction modulo $\wp^i$ of $\M^{\ord}$. Let ${\Lambda}_i=R/\wp^i[[((\Z/p^i\Z)^{\times})^{n+1}]]$ and let $\pi_{m,i}:\mathfrak{IG}^{m,i}\to\mathcal{M}_i^{\ord}$, $\pi'_{m,i}:\mathfrak{IG}^{m,i}\times (\mathfrak{IG}^{\vee})^{m,i}\to\mathcal{M}_i^{\ord}$. For $m\geq i$ let $(\kappa_{m,i},\kappa'_{m,i}):((\Z/p^m)^{\times})^{n+1}\to {\Lambda}_i^{\times}$ the characters similarly as in \eqref{eq:MIgusasheaf2} and factoring through $((\Z/p^i\Z)^{\times})^{n+1}$. We let 
\[
\Omega^{(\kappa_1,\kappa_2)}_{m,i}= \left(((\pi_{m,i})_{\star}\Oo_{\mathfrak{IG}^{m,i}}\otimes_{\Oo_{{\mathcal{M}}_i^{\ord}}}(\pi'_{m,i})_{\star}\Oo_{\mathfrak{IG}^{m,i}\times (\mathfrak{IG}^{\vee})^{m,i}}) \otimes {\Lambda}_i\right)[(\kappa_{m,i},\kappa'_{m,i})].
\]
This is a sheaf of ${\Lambda}_i\otimes \Oo_{\M^{\ord}_i}$-modules. Let us denote by ${\m}_i$ the kernel of the map ${\Lambda}\to {\Lambda}_{i}$. We have natural inclusions ${\m}_{i}\subset {\m}_{i-1}$ and the kernel of the natural map ${\Lambda}_i\to {\Lambda}_{i-1}$ can be identified with ${\m}_{i-1}/{\m}_{i}$. These maps induce the horizontal maps in the diagram
\[
\begin{tikzcd}
\Omega^{(\kappa_1,\kappa_2)}_{m,i} \arrow{r} &\Omega^{(\kappa_1,\kappa_2)}_{m,i-1}\\
\Omega^{(\kappa_1,\kappa_2)}_{m-1,i}\arrow{u}\arrow{r} &\Omega^{(\kappa_1,\kappa_2)}_{m-1,i-1}\arrow{u},
\end{tikzcd}
\]
where on the other hand the vertical maps are induced by the natural maps between the Igusa towers. Let $\Omega^{(\kappa_1,\kappa_2)}_{\infty,i}=\ccolim_m \Omega^{(\kappa_1,\kappa_2)}_{m,i}$. We then have
$\Omega^{(\kappa_1,\kappa_2)}=\llimm_{i}\Omega^{(\kappa_1,\kappa_2)}_{\infty,i}.$
We can identify $\Omega^{(\kappa_1,\kappa_2)}_{\infty,i}=\Omega^{(\kappa_1,\kappa_2)}/\m_i$, which is a quasi-coherent sheaf over $\mathcal{M}^{ord}_i$.

From the above construction and the definition of the descent datum in $\S$ \ref{secdescentigusa}, we obtain that the above description remains valid when we descend $\Omega^{(\kappa_1,\kappa_2)}$ and $\Omega^{(\kappa_1,\kappa_2)}/\m_i$ to sheaves over $\X^{\ord}$ and $X_i^{\ord}$ respectively. 

Finally, we denote by $\Omega^{(\kappa_1,\kappa_2)}/\m_i(-D)$ the $\Oo_{X_i^{\ord}}\otimes \Lambda$-modules $(\Omega^{(\kappa_1,\kappa_2)}\otimes_{\Oo_{\X^{\ord}}}\Oo_{\X^{\ord}}(-D))\otimes_{\Lambda}\Lambda/\m_i$, where $D$ is the cuspidal divisor in $\X$. Let 
$$\mathcal{F}_i:=\Omega^{(\kappa_1,\kappa_2)}/\m_i \ \text{and} \ \mathcal{F}_i(-D):=\Omega^{(\kappa_1,\kappa_2)}/\m_i(-D).$$ 
Note that by taking the reflexive hull of $j_\star \mathcal{F}_i$ (or $j_\star \mathcal{F}_i(-D)$) we obtain a finite rank extension of $\mathcal{F}_i$ and $\mathcal{F}_i(-D)$ over $X_i$ which is torsion free for the sheaf of ideals corresponding to $D_i$.

{\begin{prop}\label{megaprop} For any $i\geq 1$, fix $\bar{\F}_i$ a coherent sheaf of $\Lambda/\m_i$-modules extending $\F_i$ over $X_i$. Assume it is of finite rank and $\I$-torsion free (where $\I$ is the sheaf of ideals corresponding to $D_i$). Consider the natural maps $\mathcal{F}_{i+1}\to \mathcal{F}_i$.
\begin{itemize}
\item[(i)] The natural map $X_i^{\ord}\to X^{\ord}_{i+1}$ induces a well defined map $\RG^{J}_{i+1}(\bar{\F}_{i+1})\to \RG^{J}_i(\bar{\F}_i)$ and, similarly, a well-defined map $\RG^{J}_{i+1}(\bar{\F}_{i+1}(-D))\to \RG^{J}_i(\bar{\F}_i(-D))$.
\item[(ii)] The operator $T_J$ acts on $\RG_i(\bar{\F})_{\underline{m},\underline{n}}$ and it induces an action on $\RG^{J}_i(\bar{\F}_i)$ compatibly with respect to the maps $\RG^{J}_{i+1}(\bar{\F}_{i+1})\to \RG^{J}_i(\bar{\F}_i)$, defining an endomorphism of $\RG^{J}(\bar{\F})=\limm_{i}\RG^{J}_i(\bar{\F}_i)$. Moreover $(T_J^{n!})_n$ converges to an idempotent $e(T_J)$ on $\RG^{J}(\bar{\F})$.
  The same statements hold replacing $\bar{\F}_i,\bar{\F}$ by $\bar{\F}_i(-D),\bar{\F}(-D)$.
\item[(iii)] 
The complex $e(T_J)\RG^{J}_1(\bar{\F})$ is concentrated in degrees $[\# J, n]$ and $e(T_J)\RG^{J}_1(\bar{\F}(-D))$ is concentrated in degrees $[0, \# J]$.
\item[(iv)] The cohomologies of the complexes $e(T_J)\RG^{J}(\bar{\F})$ and $e(T_J)\RG^{J}(\bar{\F}(-D))$ are independent on the choice of the extensions $\bar{\F}_i$ of ${\F}_i$.
\end{itemize}
\end{prop}}

\begin{proof}

We start by proving (i). Recall that, as explained in \ref{liftshasse}, the natural map $X_i\to X_{i+1}$ maps $p\cdot D_{\tau,i}$ to the divisor $D_{\tau,i+1}$. Hence the map induces $\RG_{i+1}(\bar{\F}_{i+1})_{\underline{m},\underline{n}}\to \RG_i(\bar{\F}_i)_{p\underline{m},p\underline{n}}$, where $(p\underline{m})_{\tau}=pm_{\tau}$ and similarly for $p\underline{n}$, and we obtain the desired map passing to the limits-colimits.

To prove (ii), we exhibit a continuous action of $T_J$ on $ \RG^{J}_i(\bar{\F}_i)$ compatible for all $i$'s.
For every $\p$, we can decompose $X_0(\p)_i^{\ord}=X_0(\p)^{\et}_i \sqcup X_0(\p)^{m}_i$, where $X_0(\p)^{\et}_i$ and $X_0(\p)^{m}_i$ are the components where the universal $\p$-isogeny has étale and connected kernel. Using the diagram \eqref{diagramsp1p2}, we can think of $F_{\p}: F_{\p}^{\star}\Omega^{(\kappa_1,\kappa_2)} \to \Omega^{(\kappa_1,\kappa_2)}$ (respectively $U_{\p}:(F_{\p})_{\star}\Omega^{(\kappa_1,\kappa_2)} \to \Omega^{(\kappa_1,\kappa_2)}$) as a cohomological correspondence $p_2^{\star}(\Omega^{(\kappa_1,\kappa_2)}/\m_i)\to p_1^!(\Omega^{(\kappa_1,\kappa_2)}/\m_i)$ on $X_0(\p)_i^{\ord}$ given by $F_{\p}$ on $X_0(\p)^{m}_i$ and by zero on the other component (respectively by $U_{\p}$ on $X_0(\p)^{\et}_i$ and by zero on $X_0(\p)^{m}_i$). From (\ref{j_*}) we get that there exists $\ell_{\p}, \ell_{\p}'$ such that $F_{\p}$ (respectively $U_{\p}$) induces
\begin{displaymath}
F_{\p}:p_2^{\star}\bar{\F}_i\to p_1^!(\I^{-\ell_{\p}}\bar{\F}_i), \ \ \ \ \ (\text{resp.  } U_{\p}:p_2^{\star}\bar{\F}_i\to p_1^!(\I^{-\ell'_{\p}}\bar{\F}_i) \ )
\end{displaymath} 
Moreover, when restricting to $X_0(\p)^{m}_i$ (respectively $X_0(\p)^{\et}_i$), there exists $h_{\p},h'_{\p}\geq 1$ such that $p_2^{\star}(\mathcal{I}_\p^{h_{\p}})\subset p_1^{\star}(\mathcal{I}_{\p})$ (resp. $p_2^{\star}(\mathcal{I}_\p)\subset p_1^{\star}(\mathcal{I}_{\p}^{h'_{\p}})$) and $p_2^{\star}(\mathcal{I}_\q)= p_1^{\star}(\mathcal{I}_{\q})$ for every $\q\neq \p$. This gives us maps
\begin{displaymath}
F_{\p}: \RG(X_i,I_{\p}^{h_{\p}n_{\p}} \prod_{\q\not\in J}\I_\q^{-m_{\q}}\prod_{\p\neq\q\in J}\I_{\q}^{n_{\q}}\bar{\F}_i)\to \RG(X_i, \I_\p^{ n_\p-\ell_\p}\prod_{\q\not\in J}\I_\q^{-m_{\q}-\ell_\p}\prod_{\p\neq \q\in J}\I_{\q}^{n_{\q}-\ell_\p}\bar{\F}_i),
\end{displaymath}
\begin{displaymath}
U_{\p}: \RG(X_i, \prod_{\q\not\in J}\I_\q^{-m_{\q}}\prod_{\q\in J}\I_{\q}^{n_{\q}}\bar{\F}_i)\to \RG(X_i, \I_\p^{-h'_{\p}m_\p-\ell'_\p}\prod_{\p\neq\q\not\in J}\I_\q^{-m_{\q}-\ell'_\p}\prod_{\q\in J}\I_{\q}^{n_{\q}-\ell'_\p}\bar{\F}_i),
\end{displaymath}
where $\p\in J$ and $\p\not\in J$ respectively.
We can deduce that there exist $\underline{h},\underline{h}'\in (\Z_{\geq 1})^n$, $\underline{L},\underline{L}'\in (\Z_{\geq 0})^n$ such that, for every  $\underline{m}\in (\Z_{\geq 0})^{n-\#J}, \underline{n}\in(\Z_{\geq 0})^{\#J}$, $T_J$ gives a map
\begin{equation}\label{eq:TJbeforelim}
T_J:\RG_i(\bar{\F}_{i})_{(\underline{m},\underline{h}\cdot\underline{n})} \to \RG_i(\bar{\F}_{i})_{(\underline{h}'\cdot\underline{m}+\underline{L}',\underline{n}-\underline{L})}.
\end{equation}
One can take $\underline{h}=(h_{\p})_\p\in J,\underline{h}'=(h'_{\p})_\p\not\in J$ and we can replace the $\ell_{\p}$'s, $\ell'_{\p}$'s to be big enough such that $\prod_{\q\in J}h_{\q}\mid \ell_\p,\ell'_\p$ for every $\p$ and, choosing an order $\p_1,\dots,\p_s \in J, \p'_1,\dots,\p'_r \not \in J$, we can take
$$L_j=\sum_{k=1}^r\ell'_{\p'_k} + (\sum_{u=1}^{j-1}\ell_{\p_u})/h_{\p_j} +\sum_{u=j}^s\ell_{\p_u}; \ \ \ \ L'_j=h'_{\p'_j}(\sum_{u=1}^s\ell_{\p_u}+\sum_{k=1}^{j-1}\ell'_{\p'_k})+\ell'_{\p'_j}.$$
We can then rewrite \eqref{eq:TJbeforelim} as: for every $\underline{m}\in (\Z_{\geq 0})^{n-\#J}, \underline{n}\in(\Z_{\geq 0})^{\#J}$, 
there exists $\underline{\tilde{L}}'\in (\Z_{\geq 0})^{n-\#J}, \underline{\tilde{L}}\in(\Z_{\geq 0})^{\#J}$ 
independent on $(\underline{m},\underline{n})$ such that $T_J$ gives a map: 
$T_J:\RG_i(\bar{\F}_{i})_{(\underline{m},\underline{n}+\underline{\tilde{L}})} \to \RG_i(\bar{\F}_{i})_{(\underline{m}+\underline{\tilde{L}}',\underline{n})}.$ 
Hence $T_J$ commutes with all transition maps in the direct and inverse limit.  
Moreover, because the shifts are independent on $(\underline{m},\underline{n})$, 
this also establishes the continuity of \(T_J\) with respect to the limit–colimit topology
(a basis of opens can be taken to be $U_{\underline n_0} =
\ker\Bigl(\RG^J_i(\bar\F_i)\to\colim_{\underline m}\RG_i(\bar\F_i)_{\underline m,\underline n_0}\Bigr)$, then the preimage of $U_{\underline n_0}$ contains
$U_{\underline{n}_0+\underline{\tilde{L}}}$). 
It is not hard
to show that the maps (\ref{eq:TJbeforelim}) can be chosen to be compatible with respect to the map $X_i\to X_{i+1}$.
Therefore $T_J$ defines an endomorphism of $\RG^J(\bar\F)$
and for each $i$ its reduction
$T_{J,i}=T_J\mod \m_i$,
agrees with the correspondence constructed above.

Note that for every $i,\underline{m}, \underline{n}$, there exists $K$ such that for $k,k'\geq K$ the difference $T_J^{k!}-T_J^{k'!}$ belongs to the kernel of the map $\pi_{i,\underline{m},\underline{n}}$ from the endomorphisms of $\RG^J(\bar\F)$ to endomorphisms of $\RG_i(\bar\F_i)_{\underline m,\underline n}$ (because the latter is a perfect complex over $R/\wp^i$ and its endomorphism group is finite). 
Hence the sequence $(T_J^{k!})_{k\ge1}$ is Cauchy in the profinite topology and it converges to a limit
$e=\lim_{k\to\infty}T_J^{k!}\in\End(\RG^{J}(\bar{\mathcal F})).$ Note that $\m_i$ need not equal the $i$-th power $\m^i$ of the augmentation ideal of $\Lambda$; however $\m_i$ is open in the $\m$-adic topology, so for each $i$ there exists $N(i)$ with $\m^{N(i)}\subset\m_i$. 
Moreover, for each finite quotient the stable value $\pi_{i,\underline{m},\underline{n}}(T_J^{K!})$ is idempotent, because
$\pi_{i,\underline{m},\underline{n}}\big((T_J^{K!})^2\big)=\pi_{i,\underline{m},\underline{n}}(T_J^{2K!})=\pi_{i,\underline{m},\underline{n}}(T_J^{K!}),$
hence every finite-level reduction of $e$ is idempotent and consequently $e^2=e$.



We now prove (iii).
By definition, we find that $\Omega^{(\kappa_1,\kappa_2)}/\m_{1}$ is isomorphic to the sheaf 
\begin{equation}\label{eq:isoF1}
\bigoplus_{\p\mid p}\left(\bigoplus_{k_{\p}\in I_{\p}}\bigoplus_{w\in I}{\myom}^{(2\underline{k},2w)}\right),
\end{equation}
where we can choose any set $I_{\p},I$ of characters $\Z_p^{\times}\to \mathbb{F}_p^{\times}$, which in turns means making a choice of representatives of $\Z/(p-1)$ in $\Z$. In particular, we can and do choose $I_{\p}=\{-p+2,\dots,0\}$ when $\p\in J$, $I_{\p}=\{2,\dots,p\}$ when $\p\not\in J$ and $I=\{2,\dots,p\}$. Moreover, by Lemma \ref{lemmareltp}, the operator $T_J$ specialized at such choices of weights is the same as $T_p=\prod_{\p}T_{\p}$, since we are working modulo $\wp$. Hence the reduction of $e(T_J)$ agrees with the idempotent $e(T_p)$. We can then apply Theorem \ref{modpclassical} to deduce that $e(T_J)\RG^{J}_1(\bar{\F}_{1})$ is concentrated in degrees $[\#J,n]$. A similar proof applies for the cuspidal complex.

We finally prove (iv). We now fix $i$ and to ease the notation we write $\F$ for the sheaf $\F_i$. If we have two sheaves $\bar{\F},\bar{\F}'$ extending $\F$, the sheaf $\bar{\F}\cap\bar{\F}'$ also extends $\F$, hence we can reduce to prove that if $\bar{\F}'\subset\bar{\F}$ are two sheaves extending $\F$ to $X_i$, then $\RG^J_i(\bar{\F})\simeq \RG^J_i(\bar{\F}')$.

In particular, under this assumption, we have that the sheaf $\bar{\F}/\bar{\F}'$ is supported on a subset of $\cup_{\p\mid p} D_{\p,i}$, hence we find that there exists $t\geq 0$ such that $\I^t\bar{\F}\subset \bar{\F}'$ and therefore for any $\underline{m}\in (\Z_{\geq 0})^{n-\#J}, \underline{n}\in(\Z_{\geq 0})^{\#J}$ (with $m_{\tau}\gneq t$) we find maps 
\begin{equation}\label{maps}
\RG_i(\bar{\F})_{\underline{m}-t,\underline{n}+t}\to \RG_i(\bar{\F}')_{\underline{m},\underline{n}}\to \RG_i(\bar{\F})_{\underline{m},\underline{n}},
\end{equation}
where $(\underline{m}-t)_{\tau}=m_{\tau}-t,(\underline{n}+t)_{\tau}=n_{\tau}+t$.
Consider the cohomological correspondence $T_J$. As discussed above, from (\ref{j_*}) we get that there exist $\underline{L},\underline{L}', \underline{L}_1,\underline{L}'_1$ such that $T_J$ induces
\begin{displaymath}
T_J:\RG^J_i(\bar{\F})_{(\underline{m},\underline{h}\cdot\underline{n})} \to \RG^J_i(\bar{\F})_{(\underline{h}'\cdot\underline{m}+\underline{L}',\underline{n}-\underline{L})}, \ \ \ T_J':\RG^J_i(\bar{\F}')_{(\underline{m},\underline{h}\cdot\underline{n})} \to \RG^J_i(\bar{\F}')_{(\underline{h}'\cdot\underline{m}+\underline{L}'_1,\cdot\underline{n}-\underline{L}_1)}.
\end{displaymath} 
Assume without loss of generality that $\underline{L}\geq \underline{L}_1,\underline{L}'\geq \underline{L}'_1$, so that we can write $T_J':\RG^J_i(\bar{\F}'_{i})_{(\underline{m},\underline{h}\cdot\underline{n})} \to \RG^J_i(\bar{\F}'_{i})_{(\underline{h}'\cdot\underline{m}+\underline{L}',\underline{n}-\underline{L})}$. Moreover we can replace $t$ by a bigger integer and we can therefore assume that $(\prod_{\p\in J}h_{\p})\mid t$ and we have maps as in (\ref{maps}). We find the following diagram 
\begin{center}
\begin{tikzpicture}[scale=1.5]
\node (A) at (0,3) {$\RG_i(\bar{\F}')_{\underline{m},\underline{h}\underline{n}}$};
\node (B) at (7,3) {$\RG_i(\bar{\F})_{\underline{m},\underline{h}\underline{n}}$};
\node (C) at (2,2) {$\RG_i(\bar{\F}')_{\underline{m}+t,\underline{h}\underline{n}-t}$};
\node (C') at (5,2) {$\RG_i(\bar{\F})_{\underline{m}+t,\underline{h}\underline{n}-t}$};
\node (CC) at (2,1) {$\RG_i(\bar{\F}')_{\underline{h}'(\underline{m}+{t})+\underline{L}',\underline{n}-\tfrac{t}{\underline{h}}-\underline{L}}$};
\node (DD) at (5,1) {$\RG_i(\bar{\F})_{\underline{h}'(\underline{m}+t)+\underline{L}',\underline{n}-\tfrac{t}{\underline{h}}-\underline{L}}$};
\node (AA) at (0,0) {$\RG_i(\bar{\F}')_{\underline{h}'\underline{m}+\underline{L}',\underline{n}-\underline{L}}$};
\node (BB) at (7,0) {$\RG_i(\bar{\F})_{\underline{h}'\underline{m}+\underline{L}',\underline{n}-\underline{L}}$};
\path[->,font=\scriptsize]
(A) edge node[above]{} (B)
(A) edge node[left]{$T'_J$} (AA)
(B) edge node[right]{$T_J$} (BB)
(AA) edge node[above]{} (BB)
(B) edge node[above]{$f$} (C)
(C) edge node[left]{$T'_J$} (CC)
(CC) edge node[above]{} (DD)
(AA) edge node{} (CC)
(C) edge node[above]{} (C')
(C') edge node[right]{$T_J$} (DD)
(A) edge node[above]{} (C)
(B) edge node[above]{} (C')
(BB) edge node{} (DD);
\end{tikzpicture}
\end{center}
where $f$ is given by the first map in (\ref{maps}), the horizontal arrows are given by $\bar{\F}'\subset\bar{\F}$ and the maps from the bigger square to the smaller one are given by the connecting maps $\RG_i(\mathcal{G})_{\underline{m},\underline{n}}\to \RG_i(\mathcal{G})_{\underline{m}',\underline{n}'}$ for $m_{\tau}\leq m_{\tau}'$ for every $\tau \not \in J$, $n_{\tau}\geq n'_{\tau}$ for every $\tau \in J$.

Taking the limits-colimits and the inverse limit over $i$, we obtain a commutative diagram
\[
\begin{tikzcd}
\RG^J(\bar{\F}')\arrow{r}\arrow{d}{T'_J} &\RG^J(\bar{\F})\arrow{d}{T_J}\arrow[dashed]{dl}{T'_J}\\
\RG^J(\bar{\F}')\arrow{r} &\RG^J(\bar{\F}),
\end{tikzcd}
\]
which implies that the map $e(T_J')\RG^J(\bar{\F}')\to e(T_J)\RG^J(\bar{\F})$ is a quasi-isomorphism. 
\end{proof}

We now define the following $\Lambda$-complexes
\begin{displaymath}
\RG_J(\Omega^{(\kappa_1,\kappa_2)}):=\limm_{i}\RG^{J}_i(\bar{\F}_i), \ \ \ ,M_J(\Omega^{(\kappa_1,\kappa_2)}):=e(T_J)\limm_{i}\RG^{J}_i(\bar{\F}_i)
\end{displaymath}
where the limit is taken with respect to the maps of Proposition \ref{megaprop}(i).

\begin{rmk} Note that a priori the $\RG_J(\Omega^{(\kappa_1,\kappa_2)})$ may depend on the chosen extension of the sheaves $\Omega^{(\kappa_1,\kappa_2)}/\m_i$ from $X_i^{\ord}$ to $X_i$, but the definition of $M_J(\Omega^{(\kappa_1,\kappa_2)})$ is independent on such choice by Proposition \ref{megaprop}(iv). Moreover, the definition of $M_J(\Omega^{(\kappa_1,\kappa_2)})$ is also independent on the order in which we take the limits and colimits in the definitions of $\RG^{J}_i(\bar{\F}_i)$. More precisely, writing $J=J_1\sqcup J_3$, $J^c=J_2\sqcup J_4$, for $J_j\subset \Sigma_{\infty}$, one could define $\tilde{M}_J(\Omega^{(\kappa_1,\kappa_2)})=\limm_{i}e(T_J)\tilde{\RG}^{J}_i(\bar{\F}_i)$, where 
\[
\tilde{\RG}^{J}_i(\bar{\F}_i)= \left(\limm_{n_\tau}\right)_{\tau \in J_1}\left(\colim_{m_\tau}\right)_{\tau \in J_2}\left(\limm_{n_\tau}\right)_{\tau \in J_3}\left(\colim_{m_\tau}\right)_{\tau \in J_4}\RG_i(\bar{\F}_i)_{\underline{m},\underline{n}}.
\]
The definition of ${M}_J(\Omega^{(\kappa_1,\kappa_2)})$ is the one for the choice $J_3=J_2=\emptyset$. We have natural maps between the $\tilde{M}_J(\Omega^{(\kappa_1,\kappa_2)})$ for different choices of $J_j$ and for different ordering in each subset $J_j$. Using the isomorphism \eqref{eq:isoF1}, Corollary \ref{corprojector} and Lemma \ref{lemmareltp} we obtain that these maps are quasi-isomorphisms modulo the maximal ideal $\m$. Then \cite[Proposition 2.2.2]{pilloni} implies that the $\tilde{M}_J(\Omega^{(\kappa_1,\kappa_2)})$ are quasi-isomorphic $\Lambda$-complexes.
\end{rmk}

In order to state the main theorem, we consider the natural maps, which are the characteristic zero analogues of \eqref{eq:nat}-\eqref{eq:nat2} in Remark \ref{rmknaturalmapsmodp}:
\begin{align*}
\RG_J(\Omega^{(\kappa_1,\kappa_2)}) \to \limm_i(\colim_{m_\tau})_{\tau\not\in J} \RG(X_i, \prod_{\tau\not\in J}\I_\tau^{-m_{\tau}}\bar{\F}_i) \leftarrow \limm_i \RG(X_i,\bar{\F}_i),\\
\RG_J(\Omega^{(\kappa_1,\kappa_2)}) \leftarrow  \limm_i(\limm_{n_\tau})_{\tau\in J} \RG(X_i, \prod_{\tau\in J}\I_\tau^{n_{\tau}}\bar{\F}_i) \to  \limm_i \RG(X_i,\bar{\F}_i).
\end{align*}
Moreover, using the isomorphism \eqref{eq:specialiso} for any $(\underline{k},w)\in \Z^{n+1}$, we obtain 
\begin{align}\label{eq:map}
\RG_J(\Omega^{(\kappa_1,\kappa_2)})\otimes_{\Lambda, (\underline{k},w)} R\to \limm_i(\colim_{m_\tau})_{\tau\not\in J} \RG(X_i, \prod_{\tau\not\in J}\I_\tau^{-m_{\tau}}\myom^{(2\underline{k},2w)}) \leftarrow \RG(X,\myom^{(2\underline{k},2w)}),\\
\label{eq:map2}
\RG_J(\Omega^{(\kappa_1,\kappa_2)}) \otimes_{\Lambda, (2\underline{k},2w)}R \leftarrow  \limm_i(\limm_{n_\tau})_{\tau\in J} \RG(X_i, \prod_{\tau\in J}\I_\tau^{n_{\tau}}\myom^{(2\underline{k},2w)}) \to  \RG(X,\myom^{(2\underline{k},2w)}).
\end{align}
\begin{thm}\label{mainthm}
The $\Lambda$-complex $M_J(\Omega^{(\kappa_1,\kappa_2)})$ is a perfect complex and is concentrated in degrees $[\# J,n]$. Let $\underline{k}\in \Z^n$, $w\in \Z$ such that $2k_{\p}\leq -1$ for $\p \in J$, $2k_{\p}\geq 3$ for $\p\not\in J$. Then
\[
M_J(\Omega^{(\kappa_1,\kappa_2)})\otimes_{\Lambda,(\underline{k},w)}R \simeq e(T_p)\RG(X,\myom^{(2\underline{k},2w)}).
\]
\end{thm}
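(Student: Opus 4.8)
The plan is to prove the theorem in two stages, exactly mirroring the structure built up in the paper: first establish that $M_J(\Omega^{(\kappa_1,\kappa_2)})$ is a perfect complex concentrated in degrees $[\#J, n]$, and then prove the control (classicality) isomorphism by reducing modulo $\wp$ and invoking the mod $p$ theorem (Theorem \ref{modpclassical}). For the first part, I would argue as follows. By construction $M_J(\Omega^{(\kappa_1,\kappa_2)}) = \varprojlim_i e(T_J)\RG^J_i(\bar{\F}_i)$, and Proposition \ref{megaprop}(iii) gives that $H^{*,J}_i(\bar{\F}_i)$ vanishes outside degrees $[\#J, n]$ for every $i$; since the transition maps of Proposition \ref{megaprop}(i) are maps of such complexes, the limit is concentrated in $[\#J, n]$ as well (one checks the relevant $\varprojlim^1$ terms vanish using that each $H^{*,J}_i$ is a finitely generated $R/\wp^i$-module, so the Mittag--Leffler condition holds — this is already used repeatedly in the proof of Proposition \ref{megaprop}). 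For perfectness, I would use the standard criterion: a complex of $\Lambda$-modules which is $\wp$-adically complete, has bounded cohomology, and whose reduction $M_J(\Omega^{(\kappa_1,\kappa_2)}) \otimes^{\mathbb{L}}_{\Lambda} \Lambda/\m$ is a perfect complex of $\Lambda/\m = \Ff$-modules, is itself perfect over $\Lambda$ (cf.\ the analogous arguments in \cite{pilloni, boxer2020higher}). The reduction mod $\m$ is computed by the isomorphism \eqref{eq:isoF1}, which identifies $e(T_J)\RG^J_1(\bar{\F}_1)$ with $e(T_p)\RG(\underline{k},w)$ for the specific characters chosen there; by Theorem \ref{modpclassical} (applicable since $k_\p = \mp 2(p-1)$ is in the allowed range, being $\leq -1$ resp.\ $\geq 3$) this is $e(T_p)\RG(X_1,\myom^{(\underline{k},w)})$, a perfect complex of $\Ff$-modules since $X_1$ is proper over $\Ff$. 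Hence $M_J(\Omega^{(\kappa_1,\kappa_2)})$ is perfect over $\Lambda$.

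For the control isomorphism, fix a classical weight $(\underline{k},w)$ in the stated range. The key point is that specialization at $(\underline{k},w)$ commutes with the relevant operations: using \eqref{eq:specialiso}, $\Omega^{(\kappa_1,\kappa_2)} \otimes_{\Lambda,(\underline{k},\underline{n})} R \simeq (\myom^{(\underline{k},\underline{n})})_{|\X^{\ord}_R}$, and correspondingly $\bar{\F}_i \otimes_{\Lambda,(\underline{k},w)} R/\wp^i$ is an extension to $X_i$ of $(\myom^{(\underline{k},w)})_{|X_i^{\ord}}$. Since $M_J(\Omega^{(\kappa_1,\kappa_2)})$ is a perfect complex of $\Lambda$-modules and $(\underline{k},w): \Lambda \to R$ is a surjection (so $R$ is a perfect $\Lambda$-module), I would compute $M_J(\Omega^{(\kappa_1,\kappa_2)}) \otimes^{\mathbb{L}}_{\Lambda,(\underline{k},w)} R$; the derived tensor agrees with the underived one once one knows there is no higher Tor, which follows from the perfectness together with the concentration in a single interval — actually more carefully, I would use the standard derived Nakayama / base change argument (as in \cite[\S2]{pilloni}) to reduce the claim to the statement modulo each $\wp^i$, and then pass to the limit. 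Concretely, $M_J(\Omega^{(\kappa_1,\kappa_2)}) \otimes_{\Lambda,(\underline{k},w)} R/\wp^i$ should be identified with $e(T_J)\RG^J_i(\bar{\F}_i \otimes R/\wp^i)$, where now $\bar{\F}_i \otimes R/\wp^i$ is the extension of $(\myom^{(\underline{k},w)})_{|X_i^{\ord}}$; by Proposition \ref{megaprop}(iv) this complex is independent of the chosen extension, so one may take the honest automorphic sheaf $\myom^{(\underline{k},w)}(\sum n_\tau D_{\tau,i})$ itself as the extension.

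The heart of the argument is then the following chain for each $i$, to be assembled from the tools already in hand:
\begin{displaymath}
e(T_J)\RG^J_i(\bar{\F}_i)\otimes_{\Lambda,(\underline{k},w)}R \;\simeq\; e(T_J)\Big(\varprojlim_{n_\tau}\Big)_{\tau\in J}\Big(\varinjlim_{m_\tau}\Big)_{\tau\notin J}\RG\big(X_i,\myom^{(\underline{k},w)}(\textstyle\sum_{\tau\notin J}m_\tau D_{\tau,i}-\sum_{\tau\in J}n_\tau D_{\tau,i})\big)\;\simeq\; e(T_p)\RG(X_i,\myom^{(\underline{k},w)}),
\end{displaymath}
where the first isomorphism is exactness of filtered colimits and Mittag--Leffler for the inverse limits, and the second is the content to be extracted. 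For this second step I would run the mod $\wp$ argument of $\S$\ref{modpsection} ``in families over $X_i$'': the operators $U_\p$ (for $\p\notin J$), $F_\p$ (for $\p\in J$) and their product $T_J$ act on these cohomology groups with a twist of the polar divisors exactly as in Corollary \ref{corjumps}; Lemma \ref{lemmareltp} shows that, in the specialization at $(\underline{k},w)$, we have $T_\p \equiv U_\p$ mod $\wp$ when $k_\p > 1$ and $T_\p \equiv F_\p$ mod $\wp$ when $k_\p < 1$, and more precisely $T_\p = p^{k_\p-1}F_\p + U_\p$ or $T_\p = F_\p + p^{1-k_\p}U_\p$; hence after applying $e(T_J)$ the ``extra'' divisor twists get absorbed (the analogue of Corollary \ref{corprojector}), giving that the natural maps of \eqref{eq:map}--\eqref{eq:map2} become quasi-isomorphisms onto $e(T_J)\RG(X_i,\myom^{(\underline{k},w)}) = e(T_p)\RG(X_i,\myom^{(\underline{k},w)})$ — the last equality because, specialized at our weight, $T_J$ and $T_p = \prod_\p T_\p$ induce the same operator on $\RG(X,\myom^{(\underline{k},w)})$ up to the ordinary projector, again by Lemma \ref{lemmareltp} (the terms $p^{k_\p-1}F_\p$, resp.\ $p^{1-k_\p}U_\p$, are topologically nilpotent after the projector since $|k_\p|\geq 1$ and actually $\geq 3$ or $\leq -1$). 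Finally, taking $\varprojlim_i$ and using that $\RG(X,\myom^{(\underline{k},w)})$ is a perfect complex of $R$-modules (so $\wp$-adically complete with $\varprojlim^1 = 0$) yields $M_J(\Omega^{(\kappa_1,\kappa_2)})\otimes_{\Lambda,(\underline{k},w)}R \simeq e(T_p)\RG(X,\myom^{(\underline{k},w)})$.

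The main obstacle I anticipate is making the ``families over $X_i$'' version of the mod $p$ control theorem rigorous: in $\S$\ref{modpsection} everything was done on the special fibre $X_1$, where the partial Hasse invariants $h_\p$ literally cut out the divisors $D_\p$, but over $X_i$ one only has the lifted sections $\tilde h_{\tau,i}$ of $p^{i-1}$-th powers (as in $\S$\ref{liftshasse}), so the divisor combinatorics and the identification of the $\p$-components of the isogeny differentials with Hasse invariants (Remark \ref{rmkhassehecke}) have to be tracked modulo $\wp^i$ with the extra factors of $p$ accounted for. This is exactly the bookkeeping already carried out in the proof of Proposition \ref{megaprop}(ii)--(iv) — the constants $\underline h, \underline h', \underline L, \underline L'$ there — so the argument should go through, but it is the place where the most care is needed, together with checking that all the specialization isomorphisms are compatible with the $T_J$- and $T_p$-actions (which is precisely where Lemma \ref{lemmaspecFU} and Lemma \ref{lemmareltp} are invoked).
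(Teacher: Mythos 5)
Your first half (perfectness and concentration in $[\#J,n]$) is essentially the paper's argument: reduce modulo $\m$, identify the reduction via \eqref{eq:isoF1} with a mod-$\wp$ complex controlled by Theorem \ref{modpclassical} and Proposition \ref{propconcentrated}, and invoke the criterion of \cite[Proposition 2.2.1]{pilloni}. The setup of the comparison maps for the control statement also matches the paper: specialise via \eqref{eq:specialiso}, use the natural maps \eqref{eq:map}--\eqref{eq:map2}, and note that $e(T_J)=e(T_p)$ at the chosen weight by Lemma \ref{lemmareltp}.

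The gap is in how you propose to show these maps are quasi-isomorphisms. You want, for every $i$, a control isomorphism $e(T_J)\RG^J_i(\bar{\F}_i)\otimes_{\Lambda,(\underline{k},w)}R\simeq e(T_p)\RG(X_i,\myom^{(\underline{k},w)})$, obtained by ``running the mod $\wp$ argument in families over $X_i$'', and you assert that the needed bookkeeping is already in Proposition \ref{megaprop}(ii)--(iv). It is not: that proposition only gives well-definedness, local finiteness of $T_J$, the vanishing ranges, and independence of the extension $\bar{\F}_i$; it does not give the level-$i$ analogue of Corollary \ref{corprojector}, which is the actual engine of the control theorem. That corollary rests on genuinely special-fibre facts: the normalised correspondence $T_\p$ is supported on the \'{e}tale (resp.\ multiplicative) locus only over $X_1$ (Proposition \ref{propTpnorm}), and the identification of the $\p$-component of the isogeny differential with the partial Hasse invariant (Remark \ref{rmkhassehecke}) is a mod-$\wp$ statement. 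Modulo $\wp^i$ with $i\geq 2$ the decomposition $T_\p=p^{k_\p-1}F_\p+U_\p$ (resp.\ $F_\p+p^{1-k_\p}U_\p$) of Lemma \ref{lemmareltp} shows the ``wrong'' summand is divisible by $p$ but not zero, so $T_\p$ is not supported on one component of the correspondence and the divisor-absorption argument does not go through at level $i$; the constants $\underline{h},\underline{h}',\underline{L},\underline{L}'$ of Proposition \ref{megaprop} only bound how the twists move, they do not make the transition maps quasi-isomorphisms after $e(T_J)$. The paper avoids this entirely: one checks that the maps \eqref{eq:map}--\eqref{eq:map2}, after applying the projectors, are quasi-isomorphisms \emph{modulo $\wp$ only} (this is Theorem \ref{modpclassical} together with Remark \ref{rmknaturalmapsmodp} and Lemma \ref{lemmareltp}), and then \cite[Proposition 2.2.2]{pilloni} --- a Nakayama-type statement for such maps of complexes, which you gesture at but deploy in the wrong direction, namely to reduce to mod $\wp^i$ claims rather than to upgrade the mod $\wp$ one --- gives the isomorphism over $R$. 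Replacing your level-$i$ claims by this single mod-$\wp$ check plus Pilloni's Proposition 2.2.2 repairs the argument; as written, the key step of your plan is unproved and would require substantial new input. (A minor additional imprecision: concentration of a perfect complex in an interval does not by itself kill higher Tor against $R$, so the identification of the underived specialisation should also be routed through the same Nakayama-type argument rather than the reason you give.)
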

\begin{proof}
The vanishing result follows from \cite[Lemma  2.6.7]{highersiegel}, once one shows that the complex is perfect, we postpone the proof of this fact to $\S$\ref{sec:perfectness}. Assuming that, we can apply the fact that the sheaf modulo $\m$, the maximal ideal of $\Lambda$, is isomorphic to the sheaf \eqref{eq:isoF1}, together with Theorem \ref{modpclassical} to show that the complex modulo $\m$ is perfect and concentrated in the right degrees. In order to prove the classicality result, 
recall that, by \eqref{eq:specialiso}, we obtain an isomorphism
\begin{displaymath}
M_J(\Omega^{(\kappa_1,\kappa_2)})\otimes_{\Lambda, (\underline{k},w)} R \simeq \RG_J((\myom^{(2\underline{k},2w)})_{|\X^{\ord}_R}),
\end{displaymath}
where the right hand side is the image of $e(T_J)$ of the limit over $i$ of $$(\limm_{n_\p})_{\p\in J} (\colim_{n_\p})_{\p\not\in J} \RG(X_i, \prod_{\p\not\in J}I_{\p}^{-n_\p}\prod_{\p\in J}I_{\p}^{n_\p}{\myom}^{(2\underline{k},2w)}).$$ 
Hence applying the projectors, \eqref{eq:map} and \eqref{eq:map2} give us maps 
\begin{align*}
M_J(\Omega^{(\kappa_1,\kappa_2)})\otimes_{\Lambda, (\underline{k},w)} R\to e(T_J)\limm_i(\colim_{m_\tau})_{\tau\not\in J} \RG(X_i, \prod_{\tau\not\in J}\I_\tau^{-m_{\tau}}\myom^{(2\underline{k},2w)}) \leftarrow e(T_p)\RG(X,\myom^{(2\underline{k},2w)}),\\
M_J(\Omega^{(\kappa_1,\kappa_2)}) \otimes_{\Lambda, (\underline{k},w)}R \leftarrow  e(T_J)\limm_i (\limm_{n_\tau})_{\tau\in J} \RG(X_i, \prod_{\tau\in J}\I_\tau^{n_{\tau}}\myom^{(2\underline{k},2w)}) \to  e(T_p)H^{*}(X,\myom^{(2\underline{k},2w)}),
\end{align*}
where we used that, by our assumptions on the weights and Lemma \ref{lemmareltp}, the projectors $e(T_J)$ and $e(T_p)$ are the same.
The classicality result modulo $\wp$ (Theorem \ref{modpclassical}, combined with Remark \ref{rmknaturalmapsmodp}) and Lemma \ref{lemmareltp} imply that these maps are isomorphisms modulo $\wp$. By \cite[Proposition 2.2.2]{pilloni} we deduce that they are isomorphisms over $R$.
\end{proof}

\begin{rmk}\label{rmkcuspidal}
We obtain the analogous result if we consider cuspidal cohomology, i.e. $M_J(\Omega^{(\kappa_1,\kappa_2)}(-D))$. In this case, the complex is concentrated in degrees $[0,\# J]$.
\end{rmk}

\subsubsection{Perfectness}\label{sec:perfectness} In order to show that the complex of $\Lambda$-modules $M_J(\Omega^{(\kappa_1,\kappa_2)})$ is perfect, so that we can apply Nakayama's lemma in the form of \cite[Lemma  2.6.7]{highersiegel}, we will show that for every $i$ the $\Lambda_i$-complex $e(T_J)\RG^{J}_i(\bar{\F}_i)$ is a perfect complex concentrated in bounded degrees independent on $i$.

Recall that we have fixed a mod $\wp^i$ lift of the partial Hasse invariant: 
$\tilde{h}_{\tau,i} \in H^0(X_i, \omega_{\tau}^{\otimes p^{i-1}(p-1)}).$
which is a lift of the $p^{i-1}$-th power of $h_{\tau}\in H^0(X_{1},\omega_{\mathbb{F},\tau}^{\otimes (p-1)})$ (unique up to units). We let
$D_{\tau,i}:=Va(\tilde{h}_{\tau,i})$
be the divisor on $X_i$ given by the vanishing locus of $\tilde{h}_{\tau,i}$. Under the natural map $X_1\to X_i$, the divisor $p^{i-1}\cdot D_\tau$ on $X_1$ is mapped to $D_{\tau,i}$. In other words, as subschemes of $X_i$, we have 
\begin{equation}\label{eq:red}
(D_{\tau,i})_{\rm red}= D_{\tau,1}
\end{equation}

Let $\p$ the prime corresponding to $\tau$. 
Let $(C_i^{\bullet},X_i^{\ord}, p_1,p_2)$ 
be the reduction mod $\wp^i$ of the 
correspondence $(\mathfrak{X}_0(\p)^{\bullet},p_1,p_2)$ over 
$\mathfrak{X}^{\ord}$, where $\bullet\in \{\rm m, \et\}$:

\begin{equation}\label{diagramsp1p2}
\begin{tikzcd}[column sep=1.1em, row sep=1em]
&\X_0(\p)^{\et}\arrow{dr}{p_2}[swap]{\simeq} \arrow{dl}[left]{p_1=F_{\p}}\\
\X^{\ord} &&\X^{\ord}
\end{tikzcd}
\begin{tikzcd}[column sep=1.1em, row sep=1em]
&\X_0(\p)^{m}\arrow{dr}[right]{p_2=F_{\p}} \arrow{dl}{\simeq}[swap]{p_1}\\
\X^{\ord} &&\X^{\ord}.
\end{tikzcd}
\end{equation}

As in \cite[Prop.4.4.4]{highersiegel} in the Siegel case, we can compactify this correspondence over $X_i^{\ord}$ by taking the closure of $X_i^{\ord}$ in $X_i$ (which is $X_i$) and defining $\bar{C}_i^\bullet$ to be $p_1^\star X_i \cap p_2^\star X_i$. If $i=1$ this is the restriction of the correspondence 
\[
\begin{tikzcd}[column sep=1.1em, row sep=1em]
&X_0(\p)_1\arrow{dr}{p_2}[swap]{} \arrow{dl}[left]{p_1}\\
X_1 &&X_1
\end{tikzcd}
\]
to the complement of the étale/multiplicative locus respectively. 

\begin{lemma}
For the compactified correspondence $(\bar{C}_i^\bullet,X_i,p_1,p_2)$, there exists $s<1$ such that:
\begin{itemize}
\item $p_2^\star D_{\p,1} \leq s p_1^\star D_{\p,1}$ \footnote{We say that $D\leq s D'$ for closed subschemes $D=Va(I),D'=Va(I')$ and $s = \tfrac{p}{q}$, if $(I')^p \subseteq I^q$.} if $\bullet = {\et}$,
\item $p_1^\star D_{\p,1} \leq s p_2^\star D_{\p,1}$ if $\bullet = {\rm m}$.
\end{itemize}
\end{lemma}
\begin{proof}
This follows from \eqref{eq:red}, the fact that on $X_1 = (X_n)_{\rm red}$ we have $p_1^\star D_{\p,1} = p \cdot p_2^\star D_{\p,1}$ and $p_2^\star D_{\p,1} = p \cdot p_1^\star D_{\p,1}$ respectively, and the local computations in the proof of \cite[Lemma 2.1.35]{highersiegel}. In the terminology of \emph{op. cit.}, this says that $(\bar{C}_i^\bullet,X_i,p_1,p_2,D_{\p,1})$ is a strict dynamic compactification of $(C_i^{\bullet},X_i^{\ord}, p_1,p_2)$.
\end{proof} 
Before proceeding, we remark that we are encountering the problem of the closed subscheme $D_{\p,1}$ in $X_n$ being not locally principal. By \cite[Lemma 2.1.30]{highersiegel}, we can find by blow up a strict map of strict dynamic correspondences $(\tilde{C}_i^{\bullet},\tilde{X}_i, \tilde{D}_{\p,n}) \to (\bar{C}_i^\bullet,X_i, D_{\p,1})$ such that $\tilde{D}_{\p,n}$ is now a Cartier divisor. Proposition 2.2.11 in \emph{op. cit.} then guarantees that it induces a quasi-isomorphisms 
\[
\colim_m \RG(X_i, \mathcal{F}(mD_{\p,1})) \simeq \colim_m\RG (\tilde{X}_i, \mathcal{F}(m\tilde{D}_{\p,i}))
\]
\[
\limm_m \RG(X_i, \mathcal{F}(-nD_{\p,1})) \simeq \limm_m\RG (\tilde{X}_i, \mathcal{F}(-n\tilde{D}_{\p,i}))
\]
compatible with the action of the correspondences $\bar{C}_i^\bullet$ and $\tilde{C}_i^{\bullet}$. By abuse of notation we will denote by $(\bar{C}_i^\bullet,X_i,p_1,p_2 D_{\p,i})$ such dynamic compactification with $D_{\p,i}$ a Cartier divisor satisfying 
\begin{itemize}
\item $p_2^\star D_{\p,i} \leq s p_1^\star D_{\p,i}$ if $\bullet = {\et}$,
\item $p_1^\star D_{\p,i} \leq s p_2^\star D_{\p,i}$ if $\bullet = {\rm m}$.
\end{itemize}
for some $0<s<1$. 
\begin{lemma}
There exist  $0<a<b \in \Z_{\geq 0}$ and maps
\[
p_2^\star \Oo_{X_n}(bD_{\p,i}) \to  p_1^\star \Oo_{X_n}(aD_{\p,i}) \ \ \ \text{if }\bullet = {\et}\]\[
p_2^\star \Oo_{X_n}(-aD_{\p,i}) \to  p_1^\star \Oo_{X_n}(-bD_{\p,i}) \ \ \ \text{if }\bullet = {\rm m}
\]
\end{lemma}
\begin{proof}
This is proved in \cite[Lemma 2.1.17 (3)]{highersiegel}, requiring the closed subschemes to be locally principal.
\end{proof}

Fix a coherent sheaf $\mathcal{F}$ over $X^{\ord}_i$ and a coherent extension $\bar{\mathcal{F}}$ to $X_i$. As explained in Proposition \ref{megaprop} using \eqref{j_*} (or, alternatively in \cite[Proposition 2.3.13]{highersiegel}), there exists $
\ell=\ell_{\p}, \ell'=\ell_{\p}'$ such that $F_{\p}$ (respectively $U_{\p}$), defined from the correspondence $(C_i^{\rm m},X_i^{\ord}, p_1,p_2)$ (respectively $(C_i^{\et},X_i^{\ord}, p_1,p_2)$) induces
\begin{equation}\label{eq:elltwist}
F_{\p}:p_2^{\star}\bar{\mathcal{F}}\to p_1^!\bar{\mathcal{F}}(\ell D_{\p,i}), \ \ \ \ \ (\text{resp.  } U_{\p}:p_2^{\star}\bar{\mathcal{F}}\to p_1^!\bar{\mathcal{F}}(\ell' D_{\p,i}) \ )
\end{equation}


\begin{prop} For $m,n$ large enough, $F_{\p}$ and $U_\p$ extend to maps of sheaves over $X_i$
\[
F_{\p}:p_2^{\star}\bar{\mathcal{F}}(-nb D_{\p,i})\to p_1^!\bar{\mathcal{F}}(-nb D_{\p,i}), \ \ \ \ \  U_{\p}:p_2^{\star}\bar{\mathcal{F}}(mb D_{\p,i})\to p_1^!\bar{\mathcal{F}}(mb D_{\p,i}) \ 
\]
Moreover they factorize as follows: 
\[
\begin{tikzcd}
p_2^* \bar{\mathcal{F}}(-nb D_{\p,i}) \arrow[r, dashed] \arrow[d] & p_1^! \bar{\mathcal{F}}(-(n+1)b D_{\p,i}) \\
p_2^* \bar{\mathcal{F}}(-(n+1)b D_{\p,i}), \arrow[ur, "F_{\p}"]
\end{tikzcd}\]
\[
\begin{tikzcd}
p_2^* \bar{\mathcal{F}}((m+1)bD_{\p,i}) \arrow[r, dashed] \arrow[d] & p_1^! \bar{\mathcal{F}}(mb D_{\p,i}) \\
p_2^* \bar{\mathcal{F}}(mb D_{\p,i}) \arrow[ur, "U_{\p}"].
\end{tikzcd}
\]
\end{prop}
\begin{proof}
The proof follows as in \cite[Lemma 2.5.3]{highersiegel}. From \eqref{eq:elltwist} and the lemma above, we obtain maps
\[
F_{\p}:p_2^{\star}\bar{\mathcal{F}}(-(an+\ell) D_{\p,i})\to p_1^!\bar{\mathcal{F}}(-nb D_{\p,i}), \ \ \ \ \ U_{\p}:p_2^{\star}\bar{\mathcal{F}}(mb D_{\p,i})\to p_1^!\bar{\mathcal{F}}((ma+\ell') D_{\p,i}) .
\]
There exist $m_0,n_0$ such that for all $m\geq m_0$ we have $mb>\ell' +(m+1)a$ and for all $n>n_0$, we have $bn> \ell +(n+1)a$. We therefore obtain maps
\[
\begin{tikzcd}
p_2^* \bar{\mathcal{F}}(-nb D_{\p,i}) \arrow[d] & \\
p_2^* \bar{\mathcal{F}}(-(\ell +(n+1)a) D_{\p,i}) \arrow[r,"F_{\p}"] & p_1^! \bar{\mathcal{F}}(-(n+1)b D_{\p,i}) \arrow[d] \\ 
& p_1^! \bar{\mathcal{F}}(-nb D_{\p,i}),
\end{tikzcd}\]
where the vertical arrows are the natural maps induced by $\bar{\mathcal{F}}(-n'D_{\p,i})\to \bar{\mathcal{F}}(-nD_{\p,i})$ with $n'>n$ (and similarly for $U_{\p})$.
\end{proof}

This proposition can be thought of as a generalisation of Proposition \ref{prop:shiftTpmodp} (where $i=1$ and the shift is controlled more precisely since we were working with $\mathcal{F}$ an automorphic sheaf). It therefore implies, similarly as in Corollary \ref{corprojector}, a classicality mod $\wp^i$ for $i>1$. In particular it implies that the complexes
\begin{align} 
&e(F_{\p}) \limm_{n_{\p}} \RG(X_i, \bar{\mathcal{F}}(-n_{\p}D_{\p,i})) \label{eq:modpicomplex1} \\
&e(U_{\p})  \colim_{m_{\p}}\RG(X_i, \bar{\mathcal{F}}(m_{\p}D_{\p,i}))\label{eq:modpicomplex2} \\
&e(T_J)\RG^{J}_i(\bar{\F}_i) = e(T_J) \left(\limm_{n_{\p}}\right)_{\p\in J} \left(\colim_{m_{\p}}\right)_{\p\not\in J}\RG(X_i, \bar{\mathcal{F}}(\sum_{\p\not\in J}m_{\p}D_{\p,i}-\sum_{\p\in J}n_{\p}D_{\p,i})) \label{eq:modpicomplex3}
\end{align}
are isomorphic to the respective ordinary part of a complex $\RG(X_i,\mathcal{G})$, where $\mathcal{G}$ is the coherent sheaf $\mathcal{F}$ with fixed order of zeros/poles along the divisor(s) $D_{\p,i}$. In particular:  

\begin{cor}
Let $\mathcal{F}$ be a coherent sheaf of $R/\wp^i$-modules over $X^{\ord}_i$ and fix a coherent extension $\bar{\mathcal{F}}$ to $X_i$. The complexes \eqref{eq:modpicomplex1}, \eqref{eq:modpicomplex2}, \eqref{eq:modpicomplex3} are represented by bounded complexes of finite $R/\wp^i$-modules (and are therefore by \cite[\href{https://stacks.math.columbia.edu/tag/066E}{Tag 066E}]{stacks-project} pseudo-coherent complexes) with cohomology concentrated in bounded degrees independent on $i$.
\end{cor}

We also remark that the same result holds replacing $R/\wp^i$ by an Artinian ring (and taking the base change of $X_i$). Finally, we are interested in considering the sheaf
$$\mathcal{F}_i=\Omega^{(\kappa_1,\kappa_2)}/\m_i \simeq \Omega_{i,i}^{(\kappa_1,\kappa_2)}$$
(or its cuspidal version). Recall that this is obtained as pushforwards to $X_i^{\ord}$ of the structure sheaves of certain Igusa towers (of level $p^i$) and considering the natural action of $G_i:=(\Z/p^i)^{n+1}$ on it. It is a sheaf of $\Lambda_i=R/\wp^i[G_i]$-modules.

\begin{cor}
Let $\mathcal{F}$ be $\mathcal{F}_i$ in \eqref{eq:modpicomplex1}, \eqref{eq:modpicomplex2}, \eqref{eq:modpicomplex3}. Then such complexes are perfect complexes of $\Lambda_i$-modules. 
\end{cor}
\begin{proof}
By \cite[Lemma 2.6.11]{highersiegel}, the sheaves $\mathcal{F}_i$ are $\Lambda_i$-flat and we can therefore apply Theorem 2.6.10 in \emph{op. cit.}: since $\Lambda_i$ is product of local rings, and we have shown above that over such rings the complexes are pseudo-coherent, Nakayama's lemma in the form of   
\cite[Lemma 2.6.7]{highersiegel} tells us that it is enough to test perfectness modulo the maximal ideal and this reduces the statement to Theorem \ref{modpclassical}.
\end{proof}

This, in particular, implies by \cite[\href{https://stacks.math.columbia.edu/tag/0CQG}{Tag 0CQG}]{stacks-project} that $M_J(\Omega^{(\kappa_1,\kappa_2)})$ and $M_J(\Omega^{(\kappa_1,\kappa_2)}(-D))$ are perfect complexes of $\Lambda$-modules as required. 

\subsubsection{Localisation at a non-Eisenstein maximal ideal} Finally, we show that, after localising at a non-Eisenstein maximal ideal of the Hecke algebra, the complexes $M_J(\Omega^{(\kappa_1,\kappa_2)})$ and $M_J(\Omega^{(\kappa_1,\kappa_2)}(-D))$ give rise to a finite projective $\Lambda$-module interpolating the ordinary cohomology in degree $\# J$. More precisely, let us consider the Hecke sub-algebra $\mathbb{T}\subset R[T_{\mathfrak{a}},\mathfrak{a}\subset \Oo_F]$ generated by Hecke operators outside a finite set of places containing the ones dividing $\mathfrak{N}p$, where $\mathfrak{N}$ is such that $\Gamma(\N)\subset K$, the level of the Hilbert modular surface $X$.

We have the usual action of the Hecke algebra $\mathbb{T}$ on $\RG(X,\myom^{(\underline{k},w)})$ and $\RG(X,\myom^{(\underline{k},w)}(-D))$, for any weight $(\underline{k},w)$. We need to verify that $\mathbb{T}$ acts on $M_J(\Omega^{(\kappa_1,\kappa_2)})$ and $M_J(\Omega^{(\kappa_1,\kappa_2)}(-D))$, compatibly with the classicality isomorphisms of Theorem \ref{mainthm} and Remark \ref{rmkcuspidal}. Let $\mathfrak{q}$ be a prime ideal of $\Oo_F$ coprime to $p$ and $\N$. The action of $T_\q$ on classical coherent cohomology of $X$ is given in terms of a correspondence $(C,p_1,p_2)$ and we can make sense of such smooth correspondence also over $\X^{\ord}$ (see for example \cite[Proposition 15.1.1]{pilloni}, where this is done for the Siegel threefold). Since the correspondence parametrises $\q$-isogenies and $\q$ is coprime to $p$, we obtain isomorphisms at the level of the Igusa tower and in particular an isomorphism
\begin{displaymath}
p_2^{\star}(\Omega^{(\kappa_1,\kappa_2)})\to p_1^{\star}(\Omega^{(\kappa_1,\kappa_2)}).
\end{displaymath}
Using again the fact that the isogeny is coprime to $p$, we find analogous isomorphisms for the divisors given by the vanishing loci of the (lifts) of the partial Hasse invariants. We hence obtain an action of $T_\q$ on the complex $\RG_i(\bar{\F})_{\underline{m},\underline{n}}=\RG(X_i, \prod_{\tau\not\in J}\I_\tau^{-m_{\tau}}\prod_{\tau\in J}\I_{\tau}^{n_{\tau}}\bar{\F})$ for $\bar{\F}=\Omega^{(\kappa_1,\kappa_2)}/\m^i$ compatible with the natural maps we introduced when varying $n_\tau,m_\tau$ and $i$. Since this action commutes with respect to the projector $e(T_J)$, we have produced an action of $T_\q$ on $M_J(\Omega^{(\kappa_1,\kappa_2)})$ which is compatible by construction with the one on classical coherent cohomology.

Now let $\bar{\rho}: G_{F} \rightarrow \GL_2\left(\bar{\mathbb{F}}_{p}\right)$ be a Galois representation, unramified away from the primes not dividing $p\N$. We assume that $\bar{\rho}$ is absolutely irreducible. We let $\mathfrak{M}$ be the associated maximal ideal of the Hecke algebra $\mathbb{T}$ and $\Theta_{\mathfrak{m}}: \mathbb{T} \rightarrow \bar{\mathbb{F}}_{p}$ the corresponding morphism. Since $M_J(\Omega^{(\kappa_1,\kappa_2)})$ is a bounded above perfect complex, the subalgebra of its endomorphisms generated by $\mathbb{T}$ is a finite $\Lambda$-algebra, which admits a decomposition as a product of its localisations at the finitely many maximal ideals of $\mathbb{T}$. We can therefore consider $M_J(\Omega^{(\kappa_1,\kappa_2)})_{\mathfrak{M}}$ and, reasoning in a similar manner, $M_J(\Omega^{(\kappa_1,\kappa_2)}(-D))_{\mathfrak{M}}$.

{\begin{prop}\label{propisomodnoneis}
The natural map of $\Lambda$-complexes $M_J(\Omega^{(\kappa_1,\kappa_2)}(-D))\to M_J(\Omega^{(\kappa_1,\kappa_2)})$ becomes a quasi-isomorphism after localising at $\mathfrak{M}$. In particular, $H^{\# J}(M_J(\Omega^{(\kappa_1,\kappa_2)}(-D))_{\mathfrak{M}})\simeq H^{\# J}(M_J(\Omega^{(\kappa_1,\kappa_2)})_{\mathfrak{M}})$ is a finite projective $\Lambda$-module, which specialises, for classical weights as in Theorem \ref{mainthm}, to $e(T_p)H^{\# J}(X,\myom^{(2\underline{k},2w)})_{\mathfrak{M}}$.
\end{prop}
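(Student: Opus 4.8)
The plan is to analyse the cone $C^\bullet$ of the natural map $M_J(\Omega^{(\kappa_1,\kappa_2)}(-D))\to M_J(\Omega^{(\kappa_1,\kappa_2)})$, which is induced by the inclusion of sheaves $\Omega^{(\kappa_1,\kappa_2)}(-D)\hookrightarrow\Omega^{(\kappa_1,\kappa_2)}$ and is compatible with the operators $T_J$, hence with the $\mathbb{T}$-action. By Theorem \ref{mainthm} and Remark \ref{rmkcuspidal} both terms are perfect $\Lambda$-complexes, so $C^\bullet$ is perfect, and the direct factor $C^\bullet_{\mathfrak{M}}$ cut out by the idempotent attached to $\mathfrak{M}$ is again a perfect $\Lambda$-complex. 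Since $\Lambda$ is a complete noetherian local ring with residue field $\Ff$, by \cite[Proposition 2.2.2]{pilloni} (Nakayama for perfect complexes) it suffices to prove that $C^\bullet_{\mathfrak{M}}\otimes^{\mathbb{L}}_{\Lambda}\Ff\simeq(C^\bullet\otimes^{\mathbb{L}}_{\Lambda}\Ff)_{\mathfrak{M}}$ is acyclic.

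First I would compute $C^\bullet\otimes^{\mathbb{L}}_{\Lambda}\Ff$. Exactly as in the proof of Theorem \ref{mainthm}, using the identification \eqref{eq:isoF1}, Theorem \ref{modpclassical}, Proposition \ref{propconcentrated} and the congruence $e(T_J)\equiv e(T_p)\pmod{\wp}$ of Lemma \ref{lemmareltp}, the reduction modulo $\m$ of the map above is identified, for the weight $(\underline{k}_0,w_0)$ with $k_{0,\p}=-2(p-1)$ for $\p\in J$, $k_{0,\p}=2(p-1)$ for $\p\notin J$ and $w_0=2(p-1)$ (which satisfies the hypotheses of Theorem \ref{modpclassical}), with the cuspidal-to-full comparison map $e(T_p)\RG(X_1,\myom^{(\underline{k}_0,w_0)}(-D))\to e(T_p)\RG(X_1,\myom^{(\underline{k}_0,w_0)})$. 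Applying the exact projector $e(T_p)$ to the distinguished triangle attached to $0\to\myom^{(\underline{k}_0,w_0)}(-D)\to\myom^{(\underline{k}_0,w_0)}\to\myom^{(\underline{k}_0,w_0)}|_{D_1}\to 0$ (restriction to the boundary divisor $D_1$) identifies $C^\bullet\otimes^{\mathbb{L}}_{\Lambda}\Ff$ with a shift of the boundary complex $e(T_p)\RG(D_1,\myom^{(\underline{k}_0,w_0)}|_{D_1})$; moreover, by Theorem \ref{modpclassical}, the left term is concentrated in degrees $[0,\#J]$ and the middle term in degrees $[\#J,n]$.

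The crucial input is then that $\RG(D_1,\myom^{(\underline{k}_0,w_0)}|_{D_1})_{\mathfrak{M}}=0$: the prime-to-$p\N$ Hecke eigensystems occurring in the coherent cohomology of the boundary of the Hilbert modular variety are constant terms along the rational Borel, hence coincide with eigensystems of Eisenstein series, so that the associated mod-$p$ pseudo-representation is a sum of two characters and in particular reducible; since $\bar{\rho}$ is assumed absolutely irreducible, the eigensystem $\mathfrak{M}$ does not occur and the localisation vanishes (compare the analogous arguments in \cite{boxer2020higher,modularsurf}, and \cite{dimtil} for the geometry of the boundary). This is the step I expect to be the main obstacle: one has to extend the Hecke correspondences $X_0(\q)$, for $\q$ prime to $p\N$, over the toroidal boundary, describe the induced action on the boundary strata, and check that it factors through an Eisenstein quotient of $\mathbb{T}$. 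Granting this, $(C^\bullet\otimes^{\mathbb{L}}_{\Lambda}\Ff)_{\mathfrak{M}}=0$, hence $C^\bullet_{\mathfrak{M}}$ is acyclic and $M_J(\Omega^{(\kappa_1,\kappa_2)}(-D))_{\mathfrak{M}}\to M_J(\Omega^{(\kappa_1,\kappa_2)})_{\mathfrak{M}}$ is a quasi-isomorphism.

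Finally I would deduce the ``in particular''. Since $M_J(\Omega^{(\kappa_1,\kappa_2)}(-D))$ is concentrated in degrees $[0,\#J]$ and $M_J(\Omega^{(\kappa_1,\kappa_2)})$ in degrees $[\#J,n]$, the quasi-isomorphism after $-_{\mathfrak{M}}$ forces both $M_J(\Omega^{(\kappa_1,\kappa_2)})_{\mathfrak{M}}$ and $M_J(\Omega^{(\kappa_1,\kappa_2)}(-D))_{\mathfrak{M}}$ to be concentrated in the single degree $\#J$; set $M_J:=H^{\#J}(M_J(\Omega^{(\kappa_1,\kappa_2)})_{\mathfrak{M}})$, so $M_J(\Omega^{(\kappa_1,\kappa_2)})_{\mathfrak{M}}\simeq M_J[-\#J]$ is a perfect $\Lambda$-complex placed in one degree. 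Localising the triangle of the previous paragraph at $\mathfrak{M}$ and using the boundary vanishing shows that $(M_J(\Omega^{(\kappa_1,\kappa_2)})\otimes^{\mathbb{L}}_{\Lambda}\Ff)_{\mathfrak{M}}\simeq e(T_p)\RG(X_1,\myom^{(\underline{k}_0,w_0)}(-D))_{\mathfrak{M}}$ is concentrated in degree $\#J$; hence $\operatorname{Tor}^{\Lambda}_{i}(M_J,\Ff)=0$ for $i>0$, and since $M_J$ is finitely generated over the noetherian local ring $\Lambda$, the local criterion of flatness gives that $M_J$ is a finite free, in particular projective, $\Lambda$-module. As $M_J$ is free and $M_J(\Omega^{(\kappa_1,\kappa_2)})_{\mathfrak{M}}$ is concentrated in one degree, derived and ordinary specialisation coincide, so for a classical weight $(\underline{k},w)$ as in Theorem \ref{mainthm} one obtains $M_J\otimes_{\Lambda,(\underline{k},w)}R\simeq\big(M_J(\Omega^{(\kappa_1,\kappa_2)})\otimes^{\mathbb{L}}_{\Lambda,(\underline{k},w)}R\big)_{\mathfrak{M}}\simeq e(T_p)\RG(X,\myom^{(\underline{k},w)})_{\mathfrak{M}}$, which is therefore concentrated in degree $\#J$ and equals $e(T_p)H^{\#J}(X,\myom^{(\underline{k},w)})_{\mathfrak{M}}$; the identification with $H^{\#J}(M_J(\Omega^{(\kappa_1,\kappa_2)}(-D))_{\mathfrak{M}})$ follows from the quasi-isomorphism of complexes established above.
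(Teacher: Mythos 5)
Your proposal is correct and follows essentially the same route as the paper: reduce modulo $\m$ via Nakayama for perfect complexes, identify the reduction with $e(T_p)\RG(X_1,\myom^{(\underline{k}_0,w_0)}(-D))\to e(T_p)\RG(X_1,\myom^{(\underline{k}_0,w_0)})$ at the weight $k_{0,\p}=\pm 2(p-1)$, kill the boundary contribution after localising at the non-Eisenstein ideal $\mathfrak{M}$, and then deduce concentration in degree $\#J$ and projectivity. The step you flag as the main obstacle is settled in the paper by invoking the boundary-cohomology results of \cite{boundary} together with the fact that the Baily--Borel boundary of a Hilbert modular variety is zero-dimensional (a finite set of cusps), so the relevant eigensystems come from Eisenstein series, give reducible Galois representations, and vanish after localisation at $\mathfrak{M}$.
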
}
\begin{proof}
We need to prove $M_J(\Omega^{(\kappa_1,\kappa_2)}(-D))_{\mathfrak{M}} \to M_J(\Omega^{(\kappa_1,\kappa_2)})_{\mathfrak{M}}$ is a quasi-isomorphism. The second part of the proposition follows then directly from Theorem \ref{mainthm}. In order to prove that the map above is a quasi-isomorphism we show that it is a quasi-isomorphism modulo $\m$. By \eqref{eq:isoF1} and Theorem \ref{modpclassical}, we find that studying $M_J(\Omega^{(\kappa_1,\kappa_2)}(-D))\to M_J(\Omega^{(\kappa_1,\kappa_2)})$ modulo $\m$ reduces to studying the map of complexes
\[
e(T_p)\RG(X_1,\myom^{(\underline{k},w)}(-D)) \to e(T_p)\RG(X_1,\myom^{(\underline{k},w)}),
\]
for $k_{\p},w$ even integers with $\tfrac{k_{\p}}{2}\in I_{\p},\tfrac{w}{2}\in I$ where $I_{\p},I$ are chosen as in \eqref{eq:isoF1}. We are reduced to prove that for every $0\leq i\leq n$, the natural maps
\[
H^i(X_1,\myom^{(\underline{k},w)}(-D)) \to H^i(X_1,\myom^{(\underline{k},w)})
\]
become isomorphisms after localising at $\mathfrak{M}$. This follows from the description of the boundary (coherent) cohomology given for example in \cite{boundary}. More precisely, we have a long exact sequence
\begin{align*}
0&\to H^0(X_1,\myom^{(\underline{k},w)}(-D)) \to H^0(X_1,\myom^{(\underline{k},w)}) \to H^0(X_1,\myom^{(\underline{k},w)}\otimes \Oo_D) \to \dots \\&\to H^i(X_1,\myom^{(\underline{k},w)}(-D)) \to H^i(X_1,\myom^{(\underline{k},w)})\to H^i(X_1,\myom^{(\underline{k},w)}\otimes \Oo_D)\dots
\end{align*}
and we want to prove $H^i(X_1,\myom^{(\underline{k},w)}\otimes \Oo_D)_\mathfrak{M} =0$ for every $i$.
It is shown in \cite[Corollary 3.7.8, Corollary 4.1.12]{boundary} that the cohomology of the toroidal boundary of a Shimura variety with coefficients in an automorphic vector bundle can be expressed in terms of the cohomology of the (restriction) of certain automorphic vector bundles over the Shimura varities whose union gives the Baily-Borel boundary. The results of \emph{op. cit.} are actually proved in characteristic zero and the methods used are not expected to work in general in positive characteristic. However, the situation is rather simple in the case of Hilbert modular varieties, as the Baily-Borel boundary is zero-dimensional, and similar techniques can be used. More precisely, the Leray spectral sequence for the (restriction to the boundaries of the) canonical map $\pi:X_1\to X_1^{BB}$ converges $H^j(\partial X^{BB}_1,\mathrm{R}^i\pi_{\star}\myom^{(\underline{k},w))}) \Rightarrow H^{i+j}(\partial X_1,\myom^{(\underline{k},w))})$ (one possibly needs to restrict to each component of the boundary). Hence the vanishing of $H^i(X_1,\myom^{(\underline{k},w)}\otimes \Oo_D)$ after localisation will follow from the same statement for $H^0(\partial X^{BB}_1,\mathrm{R}^i\pi_{\star}\myom^{(\underline{k},w)})$. 
One can compute the stalk at a cuspidal point of $\mathrm{R}^i\pi_{\star}\myom^{(\underline{k},w)}$ using the description of the components of the toroidal compactification over a point of $\partial X^{BB}_1$ (see for example \cite[$\S$5.3]{lan12}, \cite[$\S$6,7]{lan13}): cusps are indexed by rational parabolic subgroups of $G$ whose associated torus gives rise to the toroidal boundary. The restriction of $\myom^{(\underline{k},w)}$ to the component of the toridal boundary is a torus-equivariant line bundle $\mathcal{L}_\chi$ associated to a character $\chi$ of the torus depending on the weight $(\underline{k},w)$. 
One then has
$H^0(\{c\},R^i \pi_*(\myom^{(\underline{k}, w)}) = H^i\left( \pi^{-1}(c), \mathcal{L}_\chi \right)$. The systems of eigenvalues contributing to these cohomology group are the ones induced from a proper Levi subgroup and therefore Eisenstein and these, on the other hand, gives rise to a reducible Galois representation $\rho': G_F \to \GL_2(\bar{\mathbb{F}}_p)$. We have therefore shown that after localising at a non-Eisenstein maximal ideal $\mathfrak{M}$ we obtain $H^i(X_1,\myom^{(\underline{k},w)}(-D)) \simeq H^i(X_1,\myom^{(\underline{k},w)})$ for every $i$.
%
\end{proof}

%

\subsection{Duality}\label{secduality}
The goal of this section is to define a pairing
\begin{displaymath}
\langle -,-\rangle: H^{\#J}(M_J(\Omega^{(\kappa_1,\kappa_2)}))_{\mathfrak{M}}\times H^{\#J^c}(M_{J^c}(\Omega^{(2-\kappa_1,-1-\kappa_2)}(-D)))_{\mathfrak{M}} \to \Lambda
\end{displaymath}
interpolating in classical weights the Serre duality pairing. Let us fix $J\subset \Sigma_{\infty}$ and let $i_J=\# J$. For every $i$, consider the modules
\begin{displaymath}
A_{\underline{m},\underline{n}}=H^{i_J}(X_i, \prod_{\tau\not\in J}\I_\tau^{-m_{\tau}}\prod_{\tau\in J}\I_{\tau}^{n_{\tau}}\bar{\F}_i), 
\end{displaymath}
\begin{displaymath}
B_{\underline{m},\underline{n}}=H^{n-i_J}(X_i, \prod_{\tau\not\in J}\I_\tau^{m_{\tau}}\prod_{\tau\in J}\I_{\tau}^{-n_{\tau}}\check{\bar{\F}}_i\otimes (\myom^{(\underline{2},-\underline{1})}(-D)\otimes \Lambda_i) )
\end{displaymath}
which come with the Serre duality pairing, that we denote by
\[
\langle -,-\rangle_{\underline{m},\underline{n}}: A_{\underline{m},\underline{n}}\times B_{\underline{m},\underline{n}} \to \Lambda_i.
\]
We consider
$H^{i_J,J}_i(\bar{\F}_i)= \left(\limm_{n_{\tau}}\right)_{\tau\in J} \left(\colim_{m_{\tau}}\right)_{\tau\not\in J} A_{\underline{m},\underline{n}}$ and let
\begin{displaymath}
H^{n-i_J,J^c}_i(\check{\bar{\F}}_i)= \left(\colim_{n_{\tau}}\right)_{\tau\in J}\left(\limm_{m_{\tau}}\right)_{\tau\not \in J}  B_{\underline{m},\underline{n}}.
\end{displaymath} 
\begin{lemma}
The pairing $\langle -,-\rangle_{\underline{m},\underline{n}}$ induces a well-defined pairing
\begin{displaymath}
\langle -,-\rangle: H^{i_J,J}_i(\bar{\F}_i) \times H^{n-i_J,J^c}_i(\check{\bar{\F}}_i) \to \Lambda_i.
\end{displaymath}
\end{lemma}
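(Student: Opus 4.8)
The plan is to take limits and colimits of the Serre duality pairings $\langle -,-\rangle_{\underline{m},\underline{n}}$ and check that all the transition maps are adjoint to one another, so that the induced pairing on the limit-colimit objects is well-defined. First I would make precise the compatibility of $\langle-,-\rangle_{\underline{m},\underline{n}}$ with the structural maps in the two inverse/direct systems. The inclusion $\I_\tau^{-m_\tau}\bar\F_i \hookrightarrow \I_\tau^{-m_\tau-1}\bar\F_i$ for $\tau\notin J$ (increasing the pole order) is Serre-dual to the inclusion $\I_\tau^{m_\tau+1}\check{\bar\F}_i \hookrightarrow \I_\tau^{m_\tau}\check{\bar\F}_i$ (decreasing the vanishing order), since these two maps are adjoint under the perfect pairing $\myom\otimes\check\myom\to\myom^{(\underline2,-\underline1)}(-D)\xrightarrow{KS}\Omega^n$ used to set up Serre duality (here I am using the Kodaira–Spencer isomorphism \eqref{eq:KS} and the fact that the divisors $D_\tau$ are defined over $R/\wp^i$ via the lifted partial Hasse invariants of $\S$\ref{liftshasse}). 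Similarly, the inclusion $\I_\tau^{n_\tau+1}\bar\F_i \hookrightarrow \I_\tau^{n_\tau}\bar\F_i$ for $\tau\in J$ is dual to $\I_\tau^{-n_\tau}\check{\bar\F}_i\hookrightarrow \I_\tau^{-n_\tau-1}\check{\bar\F}_i$. Thus the system $(A_{\underline m,\underline n})$ with colimits over $\underline m$ (for $\tau\notin J$) and limits over $\underline n$ (for $\tau\in J$) is, termwise, Serre-dual to the system $(B_{\underline m,\underline n})$ with limits over $\underline m$ and colimits over $\underline n$.

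Next I would use the general fact that a perfect pairing $A_{\underline m,\underline n}\times B_{\underline m,\underline n}\to \Lambda_i$ compatible with transition maps in the above sense induces a pairing
\[
\Bigl(\limm_{\underline n}\colim_{\underline m} A_{\underline m,\underline n}\Bigr)\times \Bigl(\limm_{\underline m}\colim_{\underline n} B_{\underline m,\underline n}\Bigr)\to \Lambda_i :
\]
given a class in the first factor, represented for each $\underline n$ by a compatible family of elements of $\colim_{\underline m}A_{\underline m,\underline n}$, and a class in the second factor, represented for each $\underline m$ by a compatible family of elements of $\colim_{\underline n}B_{\underline m,\underline n}$, one pairs representatives at a level $(\underline m,\underline n)$ large enough (in both $\underline m$ and $\underline n$) to make sense of both, and checks the value is independent of the choices. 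The key finiteness input making this work is that the $A_{\underline m,\underline n}$ and $B_{\underline m,\underline n}$ are finite $R/\wp^i$-modules, so the Mittag-Leffler condition holds on the inverse systems (exactly as invoked in the proof of Proposition \ref{megaprop}), and pairing commutes with these finite limits and colimits appropriately; this is where one checks there is genuinely no ambiguity. I would phrase this as a small lemma about duality of ind-pro systems of finite modules, or simply spell it out inline.

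**The main obstacle.** The subtle point is the interaction between the colimit direction and the limit direction — i.e.\ that one may choose the order in which to pass to $\limm$ and $\colim$. Concretely, pairing a class from $H^{i_J,J}_i(\bar\F_i)=\limm_{\underline n}\colim_{\underline m}A_{\underline m,\underline n}$ against one from $H^{n-i_J,J^c}_i(\check{\bar\F}_i)=\limm_{\underline m}\colim_{\underline n}B_{\underline m,\underline n}$ requires matching up an "outer limit, inner colimit" object with an "outer colimit, inner limit" object. One has to verify that for a fixed $\underline n$ and a fixed $\underline m$ the partial pairings stabilize and agree, which again uses Mittag-Leffler: the image of $A_{\underline m',\underline n}\to A_{\underline m,\underline n}$-type maps, or rather the fact that the relevant colimits of finite modules are reached at finite stage after localizing, lets one reduce to pairing honest cohomology classes at some sufficiently large bi-index. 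I expect this bookkeeping — showing the pairing is well-defined and $\Lambda_i$-bilinear, independent of all representative choices — to be the only real content; perfectness is deferred to the later theorem (Theorem \ref{thmduality}) and the compatibility with Serre duality in classical weights will follow from the classicality isomorphism \eqref{eq:isoF1} together with Theorem \ref{modpclassical}.
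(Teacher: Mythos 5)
Your core step is the same as the paper's: the entire content of the paper's proof is that the four transition maps $a_{m_\q},a_{n_\p},b_{m_\q},b_{n_\p}$ are adjoint to one another under $\langle-,-\rangle_{\underline{m},\underline{n}}$, because on sheaves they are the tautological inclusions coming from the ideal sheaves $\I_\tau$, which Serre duality (via Kodaira--Spencer, as in \eqref{eq:KS}) exchanges; this is recorded as a commutative diagram and the pairing is then declared to descend to the limit--colimit modules. Your first paragraph is exactly this, so on that score you match the paper.

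Where you differ is in what you claim makes the passage to the limits work, and there your justification is off. The paper invokes no finiteness at this point: well-definedness is extracted purely from the adjunction relations, by pairing representatives and using the commutative diagram to see that the value is independent of the choices of indices and lifts. Your appeal to Mittag--Leffler and to the colimits ``being reached at finite stage after localizing'' does not do the work you assign to it: Mittag--Leffler is a condition on the \emph{inverse} systems, and in this paper it is used (in the proof of Proposition \ref{megaprop}) only to commute cohomology with the limits, i.e.\ to write $H^{*,J}_i$ as a limit--colimit of the groups $H^{*}(\RG_i(\bar{\F}_i)_{\underline{m},\underline{n}})$; it says nothing about the direct systems, and the colimits $\colim_{\underline{m}}A_{\underline{m},\underline{n}}$ (cohomology with unbounded poles along the $D_\tau$, $\tau\notin J$) certainly do not stabilize at any finite stage. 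So your instinct that the interplay of the $\underline{m}$-colimit with the $\underline{n}$-limit deserves comment goes beyond what the paper writes, but the mechanism you propose for it is not the right one; as in the paper, the only input is the adjointness diagram. A small slip besides: $H^{n-i_J,J^c}_i(\check{\bar{\F}}_i)$ is an outer inverse limit over $\underline{m}$ of colimits over $\underline{n}$ (you write the formula correctly but then describe it as an ``outer colimit, inner limit'' object), and there are no transition maps $A_{\underline{m}',\underline{n}}\to A_{\underline{m},\underline{n}}$ of the kind your bookkeeping sentence alludes to.
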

\begin{proof}
For any $\q\not\in J, \p\in J$, let $1_\q\in \Z^{n-\#J},1_\p\in \Z^{\#J}$ be the vectors which are equal to zero everywhere but at the $\q$-th (respectively $\p$-th) place, where they are equal to $1$. Hence we have maps 
\begin{displaymath}
A_{\underline{m},\underline{n}} \xrightarrow{a_{m_{\q}}} A_{\underline{m}+1_\q,\underline{n}}, \ \ \ \ A_{\underline{m},\underline{n}} \xrightarrow{a_{n_{\p}}} A_{\underline{m},\underline{n}-1_\p}, \ \ \ \ 
B_{\underline{m},\underline{n}} \xrightarrow{b_{m_{\q}}} B_{\underline{m}-1_\q,\underline{n}}, \ \ \ \ B_{\underline{m},\underline{n}} \xrightarrow{b_{n_{\p}}} B_{\underline{m},\underline{n}+1_\p}.
\end{displaymath}
Since the pairings $\langle -,-\rangle_{\underline{m},\underline{n}}$ are just obtained by Serre duality, they are compatible with respect to these maps. Namely, the following diagram is commutative
\[
\begin{tikzcd}[column sep=1.5em]
A_{\underline{m}-{1_{\q}},\underline{n}} \arrow{d}{a_{m_{\q}-1}} &\times & B_{\underline{m}-{1_{\q}},\underline{n}}\arrow{dr}{\langle -,-\rangle_{\underline{m}-{1_{\q}},\underline{n}}} & \\
A_{\underline{m},\underline{n}}\arrow{d}{a_{n_{\p}}} &\times & B_{\underline{m},\underline{n}}\arrow{u}{b_{m_{\q}}}\arrow{r}{\langle -,-\rangle_{\underline{m},\underline{n}}} & \ \ \ \Lambda_i. \\
A_{\underline{m},\underline{n}-{1_{\p}}} &\times & B_{\underline{m},\underline{n}-{1_{\p}}} \arrow{u}{b_{n_{\p}-1}}\arrow{ur}[swap]{\langle -,-\rangle_{\underline{m},\underline{n}-{1_{\p}}}} &
\end{tikzcd}
\]
Therefore the pairings $\langle -,-\rangle_{\underline{m},\underline{n}}$ induce a well-defined pairing on the limits with respect to these maps.
\end{proof}
Let $\Omega^{(2-\kappa_1,-\kappa_2)}(-D)=\myom^{(\underline{2},-\underline{1})}(-D)\otimes \Hom(\Omega^{(\kappa_1,\kappa_2)},\Lambda\otimes \Oo_{\X^{\ord}})$. Taking the limit over $i$, we obtain from the previous lemma, a pairing
\begin{displaymath}
\langle -,-\rangle: H^{\#J}_J(\Omega^{(\kappa_1,\kappa_2)})\times H^{\#J^c}_{J^c}(\Omega^{(2-\kappa_1,-\kappa_2)}(-D)) \to \Lambda.
\end{displaymath}
Note that we have an isomorphism of $\Lambda \otimes \Oo_{\X^{\ord}}$-modules $\Omega^{(2-\kappa_1,-\kappa_2)}(-D)\simeq \Omega^{(\kappa_1,\kappa_2)}(-D)\otimes_{\phi,\Lambda} \Lambda$, where $\phi$ is the automorphism of $\Lambda$ induced by the character
\begin{align*}
\phi: &(\Z_p^\times)^{n+1} \to \Lambda^{\times}\\
& ((x_\p)_\p, y)\mapsto \prod_{\p}x_\p^2\cdot\kappa_1((x_\p)_\p)^{-1}\kappa_2(y)^{-1}.
\end{align*}
Note that, similarly as in the proof of Proposition \ref{megaprop}, we have a well-defined action of the operator $T_{J^c}$ on $H^{\#J^c}_{J^c}(\Omega^{(2-\kappa_1,-\kappa_2)}(-D))$. Moreover, the classicality result (Theorem \ref{mainthm}) for this module reads as follows: for $\underline{k}\in \Z^n$, $w\in \Z$ such that
$2k_{\p}\leq -1$ for $\p \in J$, $2k_{\p}\geq 3$ for $\p\not\in J$,
\[
N_{J^c}(\Omega^{(2-\kappa_1,-\kappa_2)}(-D))\otimes_{\Lambda,(\underline{k},w)}R \simeq e(T_p)\RG(X,\myom^{(2-2\underline{k},-2w)}(-D)),
\]
where $N_{J^c}(\Omega^{(2-\kappa_1,-\kappa_2)}(-D))$ is defined, similarly to $N_{J^c}(\Omega^{(\kappa_1,\kappa_2)}(-D))$, by applying $e(T_{J^c})$ to the complex $\limm_i\RG_i^J(\Omega^{(2-\kappa_1,-\kappa_2)}(-D))$. We now let $$M^{\#J}_J(\Omega^{(\kappa_1,\kappa_2)})=H^{\#J}(M_J(\Omega^{(\kappa_1,\kappa_2)}))_{\mathfrak{M}}, \ \  \ M^{\#J^c}_{J^c}(\Omega^{(2-\kappa_1,-\kappa_2)}(-D))=H^{\#J^c}(N_{J^c}(\Omega^{(2-\kappa_1,-\kappa_2)}(-D)))_{\mathfrak{M}}.$$

\begin{thm}\label{thmduality}
\begin{itemize}
\item[(i)] For any $(f,g)\in  H^{\#J}_J(\Omega^{(\kappa_1,\kappa_2)})\times H^{\#J^c}_{J^c}(\Omega^{(2-\kappa_1,-\kappa_2)}(-D))$, we have $$\langle \langle p \rangle^{-1}T_J f,g \rangle = \langle f,T_{J^c}g \rangle ,$$
and hence the pairing restricts to a pairing
\begin{displaymath}
\langle -,-\rangle: M^{\#J}_J(\Omega^{(\kappa_1,\kappa_2)})\times M^{\#J^c}_{J^c}(\Omega^{(2-\kappa_1,-\kappa_2)}(-D)) \to \Lambda.
\end{displaymath}
\item[(ii)] It is a perfect pairing compatible with Serre duality, namely, for any $J\subset \Sigma_{\infty}$ and classical weights $(\underline{k},w)$ as in Theorem \ref{mainthm}, the following diagram commutes
\[
\begin{tikzcd}
M^{\#J}_J(\Omega^{(\kappa_1,\kappa_2)})\otimes_{\Lambda, (\underline{k},w)} R\arrow{d}{\simeq} &\times &M^{\#J^c}_{J^c}(\Omega^{(2-\kappa_1,-\kappa_2)}(-D))\otimes_{\Lambda, (\underline{k},w)} R\arrow{d}{\simeq}\arrow{r} &R \\
e(T_p)H^{\#J}(X,\myom^{(2\underline{k},2w)})_{\mathfrak{M}} &\times &e(T_p)H^{n-\#J}(X,\myom^{(2-2\underline{k},-2w)}(-D))_{\mathfrak{M}}\arrow{ur}
\end{tikzcd}
\]
where the bottom pairing is the restriction of the classical Serre duality pairing on the ordinary part of the cohomology localised at $\mathfrak{M}$.
\end{itemize}
\end{thm}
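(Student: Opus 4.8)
The plan is to prove (i) by descending the Hecke-adjointness from the finite-level Serre duality pairings, and then to deduce (ii) by reducing perfectness to a statement modulo the maximal ideal $\mathfrak{m}$ of $\Lambda$, which we identify with classical Serre duality on the special fibre.

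\emph{Part (i).} At each finite stage the pairing $\langle-,-\rangle_{\underline m,\underline n}$ is Serre duality on the proper smooth $R/\wp^i$-scheme $X_i$ between $\prod_{\tau\notin J}\I_\tau^{-m_\tau}\prod_{\tau\in J}\I_\tau^{n_\tau}\bar\F_i$ and its Serre dual, twisted by the Kodaira--Spencer bundle $\myom^{(\underline 2,-\underline 1)}(-D)\simeq\Omega^n_{X_i}$. The operators $U_\p$ and $F_\p$ are realised by the cohomological correspondence on $X_0(\p)_i^{\ord}$ supported, respectively, on the étale and the multiplicative locus, via the pullbacks along $\pi_\p^D$ and $\pi_\p$; since $X_0(\p)$ is canonically isomorphic to its own transpose (interchanging $\pi_\p$ with $\pi_\p^D$), the formalism of duality for cohomological correspondences of \cite[\S3.2]{boxer2020higher} together with Lemma \ref{lemmaFpUpdual} shows that, under Serre duality, $F_\p$ acting on the cohomology of $\Omega^{(\kappa_1,\kappa_2)}$ is adjoint to $U_\p$ acting on the cohomology of $\Omega^{(2-\kappa_1,-\kappa_2)}(-D)$, and conversely. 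Since the $F_\p,U_\p$ pairwise commute, multiplying over $\p$ gives $D(T_J)=T_{J^c}$, i.e. $\langle T_J f,g\rangle=\langle f,T_{J^c}g\rangle$ at each stage, hence after passing to the limit-colimits (limits and colimits of mutually adjoint maps are mutually adjoint for the induced pairing). Therefore the idempotents $e(T_J)$ and $e(T_{J^c})$ are mutually adjoint, and the pairing restricts to $e(T_J)H^{\#J}_J(\Omega^{(\kappa_1,\kappa_2)})\times e(T_{J^c})H^{\#J^c}_{J^c}(\Omega^{(2-\kappa_1,-\kappa_2)}(-D))$. That it further descends to the $\mathfrak{M}$-localisations follows because Serre duality intertwines the prime-to-$p\N$ Hecke action on one side with the (transpose-)action on the other, and in the $2$-dimensional setting $\bar\rho^\vee\otimes\det\bar\rho\cong\bar\rho$, so the maximal ideal $\mathfrak{M}$ is matched on the two sides; recall also that on the dual factor the $\Lambda$-module structure is the $\phi$-twisted one, which is harmless since $\phi\in\operatorname{Aut}(\Lambda)$.

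\emph{Part (ii).} Since $\Lambda$ is a local ring and, by Proposition \ref{propisomodnoneis}, both $M^{\#J}_J(\Omega^{(\kappa_1,\kappa_2)})$ and $M^{\#J^c}_{J^c}(\Omega^{(2-\kappa_1,-\kappa_2)}(-D))$ are finite projective, hence finite free, $\Lambda$-modules, Nakayama's lemma reduces the perfectness of the pairing to perfectness after $\otimes_\Lambda\Ff$. By the isomorphism \eqref{eq:isoF1}, Theorem \ref{modpclassical} and Proposition \ref{propisomodnoneis}, the reduction mod $\mathfrak{m}$ of the pairing, localised at $\mathfrak{M}$, is the classical Serre duality pairing
\[
e(T_p)H^{\#J}(X_1,\myom^{(\underline k,w)})_{\mathfrak{M}}\times e(T_p)H^{n-\#J}(X_1,\myom^{(2-\underline k,-w)}(-D))_{\mathfrak{M}}\to\Ff,
\]
for $k_\p=-2(p-1)$ if $\p\in J$, $k_\p=2(p-1)$ if $\p\notin J$, and $w=2(p-1)$. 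Serre duality on the smooth proper $\Ff$-variety $X_1$ is a perfect pairing; by part (i) taken mod $\mathfrak{m}$ the ordinary idempotent on one side is adjoint to the ordinary idempotent on the other, so perfectness is preserved on passing to the ordinary part, and then to the $\mathfrak{M}$-localisation, which is a direct summand. The commutativity of the diagram with classical Serre duality is then obtained by tracking the explicit natural maps \eqref{eq:map}, \eqref{eq:map2} (and, mod $\wp$, Remark \ref{rmknaturalmapsmodp}) used to construct the classicality isomorphisms of Theorem \ref{mainthm}: Serre duality on $X$ is functorial for the restriction maps increasing the pole order along $D_\tau$ ($\tau\notin J$) and for the maps increasing the vanishing order along $D_\tau$ ($\tau\in J$), these being mutually adjoint on the two sides, so the diagram commutes at each finite stage and hence in the limit.

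The hard part will be the bookkeeping in this last step: one must check that Serre duality genuinely exchanges the ``$Rj_\star$'' direction along $D_\tau$, $\tau\notin J$ (the colimits over $m_\tau$, i.e. arbitrary poles) with the ``$j_!$'' direction along the same divisors on the dual side (the limits over $m_\tau$, i.e. high vanishing order), and symmetrically for $\tau\in J$, in a way compatible with the idempotents $e(T_J)$, $e(T_{J^c})$, so that after passing to the limit one recovers \emph{exactly} the classical Serre duality pairing on $X$ and not a pairing on some open subscheme. This is where the precise normalisations of $T_J$ and of the Kodaira--Spencer twist $\myom^{(\underline 2,-\underline 1)}(-D)\simeq\Omega^n_X$ from \S\ref{secHecke} and \S\ref{char0sec} have to be matched against each other; the remaining points are formal consequences of Nakayama's lemma and of the duality formalism for cohomological correspondences already in place.
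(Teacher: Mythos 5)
Your part (i) has a genuine gap. The adjointness $\langle T_J f,g\rangle=\langle f,T_{J^c}g\rangle$ is needed at the $\Lambda$-adic level, but the only real input you cite, Lemma \ref{lemmaFpUpdual}, is a statement about the specialisations of $F_\p$ and $U_\p$ at classical weights (its proof goes through Lemma \ref{lemmaspecFU} and Proposition \ref{dualtp}). Invoking it ``together with the formalism of \cite[\S 3.2]{boxer2020higher}'' does not by itself give $D(F_\p)=U_\p$ for the big sheaves $\Omega^{(\kappa_1,\kappa_2)}$ and $\Omega^{(2-\kappa_1,-\kappa_2)}(-D)$: one would have to redo the correspondence-duality computation with $\Lambda$-coefficients, keeping track of the $\phi$-twist of the $\Lambda$-module structure on the dual side, the Kodaira--Spencer normalisation, and the fact that at finite level $T_J$ does not preserve the individual groups $A_{\underline{m},\underline{n}}$ but shifts the indices $(\underline{m},\underline{n})$, so ``adjoint at each stage'' needs a careful formulation compatible with the transition maps. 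The paper avoids all of this with an idea that is absent from your proposal: the weights in $((p-1)^2\Z)^{n+1}$ are dense in $\Hom_{\operatorname{cont}}((1+p\Z_p)^{n+1},\Z_p^{\times})$ and $\Lambda\to\prod_{(\underline{k},w)}R$ is injective, so the $\Lambda$-valued identity in (i) can be checked after specialising at these classical weights, where it is exactly Lemma \ref{lemmaFpUpdual}. As written, you neither supply this density argument nor carry out the $\Lambda$-adic duality computation, so (i) is not established.

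For part (ii), your reduction of perfectness to the residue field via projectivity, Nakayama and Serre duality on $X_1$ is a legitimate alternative to the paper's route, which instead deduces perfectness from the commutativity of the displayed diagram at a characteristic-zero classical weight together with Proposition \ref{propisomodnoneis}. However, the compatibility with classical Serre duality, i.e.\ the commutativity of the diagram, is the actual content of (ii), and you defer it as ``bookkeeping''. In the paper it is proved concretely: the top square commutes by construction of the maps \eqref{eq:map} and \eqref{eq:map2}, and the bottom square follows from the chain $\langle e(T_J)i(f),g\rangle=\langle i(f),e(T_{J^c})g\rangle=\langle i(f),g\rangle=\langle f,j(g)\rangle=\langle f,e(T_p)j(g)\rangle$, using part (i), Proposition \ref{dualtp} and idempotency of the projectors; the middle adjunction $\langle i(f),g\rangle=\langle f,j(g)\rangle$ (colimits of poles along $D_\tau$ pairing against limits of zeros on the dual side) is precisely the point you flag but do not verify. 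So the plan identifies the right issues, but both the key step of (i) and the decisive compatibility in (ii) remain unproved in your write-up.
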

\begin{proof}
Recall that $\Z^{n+1}\subset \Hom_{\operatorname{cont}}((\Z_p^\times)^{n+1},\Z_p^{\times})$ is dense, where the embedding is given by sending $(k_1,\dots,k_{n+1})$ to the character $(x_1,\dots, x_{n+1})\mapsto \prod x_i^{k_i}$. 
Hence for $M=H^{\#J}_J(\Omega^{(\kappa_1,\kappa_2)})$ or $M=H^{\#J^c}_{J^c}(\Omega^{(2-\kappa_1,-\kappa_2)}(-D))$, the map $M\to \prod_{(\underline{k},w)\in \Z^{n+1}}M\otimes_{(\underline{k},w)}R$ is injective and so is the map $\Lambda\to \prod_{(\underline{k},w)\in \Z^{n+1}}R$. Hence to prove the identity claimed in (i), it is enough to prove it for the pairing specialised in weight $(\underline{k},w)$ for every $(\underline{k},w)\in \Z^{n+1}$, which is a pairing 
\[
H^{\#J}_J(\myom^{(2\underline{k},2w)})\times H^{\#J^c}_{J^c}(\myom^{(2-2\underline{k},-2w)}(-D)) \to R.
\]
Then the statement follows using Lemma \ref{lemmaFpUpdual}.

In order to prove that this pairing is perfect, it is enough to prove the commutativity of the diagram in (ii), since the bottom pairing is perfect and the $\Lambda$-modules $M^{\#J}_J(\Omega^{(\kappa_1,\kappa_2)}), M^{\#J^c}_{J^c}(\Omega^{(2-\kappa_1,-\kappa_2)}(-D))$ are projective by Proposition \ref{propisomodnoneis}. By construction, we have a commutative diagram
\[
\begin{tikzcd}[column sep=1em]
H^{\#J}_J(\Omega^{(\kappa_1,\kappa_2)})\otimes_{\Lambda, (\underline{k},w)} R \arrow{d}{} &\times &H^{\#J^c}_{J^c}(\Omega^{(2-\kappa_1,-\kappa_2)}(-D))\otimes_{\Lambda, (\underline{k},w)} R\arrow{dr} \\
 \limm_i(\colim_{m_\tau})_{\tau\not\in J} H^{\#J}(X_i, \prod_{\tau\not\in J}\I_\tau^{-m_{\tau}}\myom^{(\underline{k},w)})&\times & \limm_i(\limm_{m_\tau})_{\tau\not\in J} H^{\#J^c}(X_i, \prod_{\tau\not\in J}\I_\tau^{m_{\tau}}\myom^{(\underline{k},w)})\arrow{u}{}\arrow{d}{j}\arrow{r} &R.\\
H^{\#J}(X,\myom^{(2\underline{k},2w)})\arrow{u}{i} &\times &H^{n-\#J}(X,\myom^{(2-2\underline{k},-2w)}(-D))\arrow{ur}
\end{tikzcd}
\]
where the vertical maps are the ones obtained in \eqref{eq:map} (for the left ones) and in \eqref{eq:map2} (for the right ones). As before, since the projectors $e(T_J)$ and $e(T_p)$ are the same for our choice of $(\underline{k},w)$ we can write analogous maps for the image of such projectors. We need to check the pairings commute. This follows from (i). Indeed the top square is commutative by construction. For the bottom one, if we take $f\in e(T_p)H^{\#J}(X,\myom^{(2\underline{k},2w)})$ and $g\in \limm_ie(T_J)(\limm_{m_\tau})_{\tau\not\in J} H^{\#J^c}(X_i, \prod_{\tau\not\in J}\I_\tau^{m_{\tau}}\myom^{(2\underline{k},2w)})$, we obtain
\[
\langle e(T_J)i(f),g\rangle \stackrel{(a)}{=} \langle i(f), e(T_{J^c}) g\rangle \stackrel{(b)}{=} \langle i(f),g\rangle = \langle f,j(g)\rangle \stackrel{(b')}{=} \langle e(T_p)f,j(g)\rangle \stackrel{(a')}{=} \langle f, e(T_p)j(g)\rangle,
\]
where for $(a)$ and $(a')$ we used part (i) of the theorem and Proposition \ref{dualtp} respectively and for $(b)$ and $(b')$ the fact that the projectors are idempotent and $g$ lies in the image of $e(T_p)$, $f$ lies in the image of $e(T_J)$ respectively. The remaining equality follows from the commutativity of the bottom part of the above diagram.
\end{proof}


\bibliographystyle{alpha}
\bibliography{biblio}

\begin{thebibliography}{BCGP21}

\bibitem[AG05]{AG}
F.~{Andreatta} and E.~Z. {Goren}.
\newblock {\em {Hilbert modular forms: mod \(p\) and \(p\)-adic aspects}},
  volume 819.
\newblock Providence, RI: American Mathematical Society (AMS), 2005.

\bibitem[AIP16]{AIP}
F.~Andreatta, A.~Iovita, and V.~Pilloni.
\newblock {On overconvergent Hilbert modular cusp forms}.
\newblock In {\em {Arithm\'etique \(p\)-adique des formes de Hilbert}}, pages
  163--193. Paris: Soci\'et\'e Math\'ematique de France (SMF), 2016.

\bibitem[AIS14]{AIS}
F.~{Andreatta}, A.~{Iovita}, and G.~{Stevens}.
\newblock {Overconvergent modular sheaves and modular forms for
  \(\mathrm{GL}_{2/F}\)}.
\newblock {\em {Isr. J. Math.}}, 201:299--359, 2014.

\bibitem[BCGP21]{modularsurf}
G.~Boxer, F.~Calegari, T.~Gee, and V.~Pilloni.
\newblock Abelian surfaces over totally real fields are potentially modular.
\newblock {\em Publ. Math., Inst. Hautes {\'E}tud. Sci.}, 134:153--501, 2021.

\bibitem[Box15]{boxerthesis}
G.~Boxer.
\newblock Torsion in the coherent cohomology of shimura varieties and galois
  representations, 2015.

\bibitem[BP22a]{highercoleman}
G.~Boxer and V.~Pilloni.
\newblock {Higher Coleman Theory}.
\newblock {\em Preprint}, 2022.

\bibitem[BP22b]{boxer2020higher}
G.~Boxer and V.~Pilloni.
\newblock Higher {Hida} and {Coleman} theories on the modular curve.
\newblock {\em {\'E}pijournal de G{\'e}om. Alg{\'e}br., EPIGA}, 6:33, 2022.
\newblock Id/No 16.

\bibitem[BP25]{highersiegel}
G.~Boxer and V.~Pilloni.
\newblock Higher {H}ida theory for {S}iegel modular forms, 2025.

\bibitem[Cha90]{chai}
C.-L. Chai.
\newblock Arithmetic minimal compactification of the {H}ilbert-{B}lumenthal
  moduli spaces.
\newblock {\em Ann. of Math. (2)}, 131(3):541--554, 1990.

\bibitem[{Col}96]{col1}
R.~F. {Coleman}.
\newblock {Classical and overconvergent modular forms}.
\newblock {\em {Invent. Math.}}, 124(1-3):215--241, 1996.

\bibitem[{Col}97]{col2}
R.~F. {Coleman}.
\newblock {\(p\)-adic Banach spaces and families of modular forms}.
\newblock {\em {Invent. Math.}}, 127(3):417--479, 1997.

\bibitem[DT04]{dimtil}
M.~Dimitrov and J.~Tilouine.
\newblock {Vari\'et\'es et formes modulaires de Hilbert arithm\'etiques pour
  \(\Gamma_1({\mathfrak c},{\mathfrak n})\)}.
\newblock In {\em {Geometric aspects of Dwork theory. Vol. I, II}}, pages
  555--614. Berlin: Walter de Gruyter, 2004.

\bibitem[ERX17]{redu}
M.~Emerton, D.~Reduzzi, and L.~Xiao.
\newblock {Galois representations and torsion in the coherent cohomology of
  Hilbert modular varieties}.
\newblock {\em {J. Reine Angew. Math.}}, 726:93--127, 2017.

\bibitem[Fal83]{faltings82}
G.~Faltings.
\newblock {On the cohomology of locally symmetric Hermitian spaces}.
\newblock {S\'emin. d'Alg\`ebre P. Dubreil et M.-P. Malliavin, 35\`eme Ann\'ee,
  Proc., Paris 1982, Lect. Notes Math. 1029, 55-98 (1983).}, 1983.

\bibitem[FC90]{faltchai}
C.~Faltings and C.-L. Chai.
\newblock {\em {Degeneration of abelian varieties}}, volume~22.
\newblock Berlin etc.: Springer-Verlag, 1990.

\bibitem[GK19]{strata}
W.~{Goldring} and J.-S. {Koskivirta}.
\newblock {Strata Hasse invariants, Hecke algebras and Galois representations}.
\newblock {\em {Invent. Math.}}, 217(3):887--984, 2019.

\bibitem[GLZ23]{grossi2023padic}
G.~Grossi, D.~Loeffler, and S.~Zerbes.
\newblock P-adic asai l-functions for quadratic hilbert eigenforms, 2023.

\bibitem[GO00]{gorenoort}
E.~Z. {Goren} and F.~{Oort}.
\newblock {Stratifications of Hilbert modular varieties}.
\newblock {\em {J. Algebr. Geom.}}, 9(1):111--154, 2000.

\bibitem[Gor01]{gorenhasse}
E.~Z. Goren.
\newblock {Hasse invariants for Hilbert modular varieties}.
\newblock {\em {Isr. J. Math.}}, 122:157--174, 2001.

\bibitem[Gro20]{giada}
G.~Grossi.
\newblock {On norm relations for Asai-Flach classes}.
\newblock {\em {Int. J. Number Theory}}, 16(10):2311--2377, 2020.

\bibitem[Hid88]{hidaannals}
H.~Hida.
\newblock {On p-adic Hecke algebras for \(GL_ 2\) over totally real fields}.
\newblock {\em {Ann. Math. (2)}}, 128(2):295--384, 1988.

\bibitem[{Hid}89]{hidaalgebra}
H.~{Hida}.
\newblock {On nearly ordinary Hecke algebras for \(GL(2)\) over totally real
  fields}.
\newblock In {\em {Explicit universal deformations of Galois representations}},
  pages 139--169. 1989.

\bibitem[{Hid}02]{hidajussieu}
H.~{Hida}.
\newblock {Control theorems of coherent sheaves on Shimura varieties of PEL
  type}.
\newblock {\em {J. Inst. Math. Jussieu}}, 1(1):1--76, 2002.

\bibitem[Hid04]{hidabook}
H.~Hida.
\newblock {\em {\(p\)-adic automorphic forms on Shimura varieties}}.
\newblock New York, NY: Springer, 2004.

\bibitem[HZ94]{boundary}
M.~{Harris} and S.~{Zucker}.
\newblock {Boundary cohomology of Shimura varieties. I: Coherent cohomology on
  toroidal compactifications}.
\newblock {\em {Ann. Sci. \'Ec. Norm. Sup\'er. (4)}}, 27(3):249--344, 1994.

\bibitem[Kat78]{katzpadic}
N.M. Katz.
\newblock {$p$}-adic {$L$}-functions for cm fields.
\newblock {\em Invent. Math.}, 49:199--297, 1978.

\bibitem[KL05]{kisin}
M.~Kisin and K.~F. Lai.
\newblock Overconvergent {H}ilbert modular forms.
\newblock {\em Amer. J. Math.}, 127(4):735--783, 2005.

\bibitem[Lan12]{lan12}
K.-W. Lan.
\newblock Comparison between analytic and algebraic constructions of toroidal
  compactifications of {PEL}-type {Shimura} varieties.
\newblock {\em J. Reine Angew. Math.}, 664:163--228, 2012.

\bibitem[Lan13]{lan13}
K.-W. Lan.
\newblock {\em {Arithmetic compactifications of PEL-type Shimura varieties}}.
\newblock Princeton, NJ: Princeton University Press, 2013.

\bibitem[Lan17]{lan17}
K.-W. Lan.
\newblock {Integral models of toroidal compactifications with projective cone
  decompositions}.
\newblock {\em {Int. Math. Res. Not.}}, 2017(11):3237--3280, 2017.

\bibitem[Lan18]{lanbis}
K.-W. Lan.
\newblock {\em {Compactifications of PEL-type Shimura varieties and Kuga
  families with ordinary loci}}.
\newblock Hackensack, NJ: World Scientific, 2018.

\bibitem[Lan19]{lanauto}
K.-W. Lan.
\newblock {Cohomology of automorphic bundles}.
\newblock In {\em {Proceedings of the seventh international congress of Chinese
  mathematicians, ICCM 2016, Beijing, China, August 2016. Volume I}}, pages
  303--325. Somerville, MA: International Press; Beijing: Higher Education
  Press, 2019.

\bibitem[LLZ18]{HMS}
A.~Lei, D.~Loeffler, and S.~L. Zerbes.
\newblock Euler systems for {H}ilbert modular surfaces.
\newblock {\em Forum Math. Sigma}, 6:e23, 67, 2018.

\bibitem[LPSZ21]{padicLfnct}
D.~{Loeffler}, V.~{Pilloni}, C.~{Skinner}, and S.L. {Zerbes}.
\newblock Higher {H}ida theory and {$p$}-adic {$L$}-functions for {${GSp}_4$}.
\newblock {\em Duke Math. J.}, 170(18):4033--4121, 2021.

\bibitem[LZ20]{gsp4blochkato}
D.~Loeffler and S.L. Zerbes.
\newblock {On the Bloch-Kato conjecture for GSp(4)}, 2020.

\bibitem[{Pil}12]{pillonibull}
V.~{Pilloni}.
\newblock {Sur la th\'eorie de Hida pour le groupe \(\mathrm{GSp}_{2g}\)}.
\newblock {\em {Bull. Soc. Math. Fr.}}, 140(3):335--400, 2012.

\bibitem[Pil20]{pilloni}
V.~Pilloni.
\newblock Higher coherent cohomology and p-adic modular forms of singular
  weight.
\newblock {\em {Duke Math. J.}}, 169(9):1647--1807, 2020.

\bibitem[Rap78]{rapo}
M.~Rapoport.
\newblock Compactifications de l'espace de modules de {H}ilbert-{B}lumenthal.
\newblock {\em Compositio Math.}, 36(3):255--335, 1978.

\bibitem[{Sta}26]{stacks-project}
The {Stacks project authors}.
\newblock The stacks project.
\newblock \url{https://stacks.math.columbia.edu}, 2026.

\bibitem[TX16]{tianxiao}
Y.~Tian and L.~Xiao.
\newblock {$p$}-adic cohomology and classicality of overconvergent {H}ilbert
  modular forms.
\newblock {\em Ast\'{e}risque}, (382):73--162, 2016.

\bibitem[Zha21]{zhang}
X.~Zhang.
\newblock {Hida theory for Shimura varieties of Hodge type}, 2021.

\end{thebibliography}

\Addresses

\end{document}